\newcommand{\noun}[1]{\textsc{#1}}
\numberwithin{equation}{section}
\numberwithin{figure}{section}
\theoremstyle{plain}
\newtheorem{thm}{Theorem}
  \theoremstyle{plain}
  \newtheorem{lem}[thm]{Lemma}
  \theoremstyle{plain}
  \newtheorem{prop}[thm]{Proposition}
  \theoremstyle{remark}
  \newtheorem{rem}[thm]{Remark}
  \theoremstyle{plain}
  \newtheorem{cor}[thm]{Corollary}
 \theoremstyle{definition}
 \newtheorem*{defn*}{Definition}
\def\makebbb#1{
    \expandafter\gdef\csname#1\endcsname{
        \ensuremath{\Bbb{#1}}}
}
\begin{document}

\title{Relative Kähler-Ricci flows and their quantization}

\author{Robert J.Berman }
\begin{abstract}
Let $\pi:\mathcal{\, X}\rightarrow S$ be a holomorphic fibration
and let $\mathcal{L}$ be a relatively ample line bundle over $\mathcal{X}.$
We define relative Kähler-Ricci flows on the space of all Hermitian
metrics on $\mathcal{L}$ with relatively positive curvature and study
their convergence properties. Mainly three different settings are
investigated: the case when the fibers are Calabi-Yau manifolds and
the case when $\mathcal{L=}\pm K_{\mathcal{X}/S}$ is the relative
(anti-) canonical line bundle. The main theme studied is whether {}``positivity
in families'' is preserved under the flows and its relation to the
variation of the moduli of the complex structures of the fibers. The
{}``quantization'' of this setting is also studied, where the role
of the Kähler-Ricci flow is played by Donaldson's iteration on the
space of all Hermitian metrics on the finite rank vector bundle $\pi_{*}\mathcal{L}\rightarrow S.$
Applications to the construction of canonical metrics on the relative
canonical bundles of canonically polarized families and Weil-Petersson
geometry are given. Some of the main results are a parabolic analogue
of a recent elliptic equation of Schumacher and the convergence towards
the Kähler-Ricci flow of Donaldson's iteration in a certain double
scaling limit.
\end{abstract}

\address{Mathematical Sciences, Chalmers University of Technology and the
University of Gothenburg, SE-412 96 Göteborg, Sweden }

\email{robertb@chalmers.se}

\keywords{Kähler-Ricci flow, positivity, Kähler-Einstein metric, balanced metric,
Weil-Petersson metric. MSC 2010: 53C55, 32G05, 32Q20, 14J32 }

\maketitle

\section{Introduction}

\subsection{Background}

On an $n-$dimensional Kähler manifold $(X,\omega_{0})$ Hamilton's
Ricci flow \cite{ha} on the space of Riemannian metrics on $X$ preserves
the Kähler condition of the initial metric and may be written as the\emph{
Kähler-Ricci flow} 

\begin{equation}
\frac{\partial\omega_{t}}{\partial t}=-\mbox{Ric}\omega_{t},\label{eq:ke eq for ric intr}\end{equation}
When $X$ is a Calabi-Yau manifold (which here will mean that the
canonical line bundle $K_{X}$ is holomorphically trivial) it was
shown by Cao \cite{ca} that the corresponding flow in the space of
Kähler metrics in $[\omega_{0}]\in H^{2}(X,\R)$ has a large time
limit. The limit is thus a fixed point of the flow which coincides
with the unique Ricci flat Kähler metric in $[\omega_{0}],$ whose
existence was first established by Yau \cite{yau} in his celebrated
proof of the Calabi conjecture. The non-Calabi-Yau cases when $[\omega_{0}]$
is the first Chern class $c_{1}(L)$ of $L=rK_{X},$ where $r=\pm1$
have also been studied extensively (where $-r\omega$ is added to
the right hand side in equation \ref{eq:ke eq for ric intr}). In
general the fixed points of the corresponding Kähler-Ricci flows are
hence Kähler-Einstein metrics of negative ($r=1)$ and positive $(r=-1)$
scalar curvature. The convergence towards a fixed point - when it
exists - in the latter positive case (i.e. $X$ is a Fano manifold
) was only established very recently by Perelman and Tian-Zhu \cite{t-z}. 

A distinctive feature of Kähler geometry is that a Kähler metric $\omega$
may be locally described in terms of a local function $\phi,$ such
that $\omega=dd^{c}\phi,$ where $\phi$ is determined up to an additive
constant. In the integral case, i.e. when $[\omega_{0}]=c_{1}(L)$
is the first Chern class of an ample line bundle $L\rightarrow X$
this just amounts to the global fact that the space of Kähler metrics
$\omega$ in $c_{1}(L)$ may be identified with the space \emph{$\mathcal{H}_{L}$}
of smooth metrics $h$ on the line bundle $L$ with positive curvature
form $\omega,$ modulo the action of $\R$ on \emph{$\mathcal{H}_{L}$}
by scalings. Locally, $h=e^{-\phi}$ and we will refer to the additive
object $\phi$ as a\emph{ weight }on $L$ (see section \ref{sub:The-weight-notation}).
In this notation the Kähler-Einstein equations may be expressed as
Monge-Ampère equations on $\mathcal{H}_{L}.$ For example, on a Calabi-Yau
manifold $\omega_{\phi}:=dd^{c}\phi$ is Ricci flat precisely when
\begin{equation}
(dd^{c}\phi)^{n}/n!=\mu,\label{eq:inhome ms intr}\end{equation}
 where $\mu$ is the canonical probability measure on $X$ such that
$\mu=i^{n^{2}}\Omega\wedge\bar{\Omega},$ for $\Omega$ a suitable
global holomorphic $n-$form trivializing $K_{X}$ (to simplify the
notion we will in the following always assume that the volume of the
given class $[\omega_{0}]$ is equal to one, so that $\omega_{0}^{n}/n!$
defines a probability measure on $X$ for any $\omega\in[\omega_{0}]).$
By letting $\mu$ depend on $\phi$ in a suitable way general Kähler-Einstein
are obtained. 

As emphasized by Yau \cite{yau2} one can expect to obtain approximations
to Kähler-Einstein metrics by using holomorphic sections of high powers
of a line bundle. In this direction Donaldson recently introduced
certain iterations on the {}``quantization'' (at level $k)$ of
the space $\mathcal{H}_{L}$ of Kähler metrics in $c_{1}(L)$ \cite{do3}.
Geometrically, this quantized space, denoted by $\mathcal{H}^{(k)},$
is the space of all Hermitian metrics on the finite dimensional vector
space $H^{0}(X,kL)$ of global holomorphic sections of $kL,$ where
$kL$ denotes the $k$th tensor power of $L,$ in our additive notation
(for the definition see section \ref{sec:Quantization:-The-Bergman general}).
In other words $\mathcal{H}^{(k)}$ can be identified with the symmetric
space $GL(N_{k},\C)/U(N_{k})$ of $N_{k}\times N_{k}$ Hermitian matrices
which in turn, using projective embeddings, corresponds to the space
of level $k$ Bergman metrics on $L.$ The fixed points of Donaldson's
iteration are called \emph{balanced} metrics at level $k$ (with respect
to $\mu)$ and they first appeared in the previous work of Bourguignon-Li-Yau
\cite{b-l-y}. Again, in the $\pm K_{X}-$setting one lets $\mu$
depend on $\phi$ in a suitable way leading to different settings
(see below). In the limit when $L$ is replaced by a large tensor
power it has very recently been shown that balanced metrics in the
different settings indeed converge to Kähler-Einstein metrics \cite{wa,ke,bbgz}.
It was pointed out by Donaldson in \cite{do3} that it seems likely
that these iterations can be viewed as discrete approximations of
the Ricci flow. This will be made precise and confirmed in the present
paper (Theorem \ref{thm:conv of bergman iter to ricci cy} and Theorem
\ref{thm:conv of bergman to ricci}).

\subsection{Outline of the present setting and the main results}

The aim of the present paper is to study\emph{ relative }versions
of the Kähler-Ricci flow and Donaldson's iteration (in the various
settings) and investigate whether {}``positivity in families'' is
preserved under the flows. In other words, the given geometric setting
is that of a holomorphic fibration $\pi:\,\mathcal{X}\rightarrow S$
of relative dimension $n$ and a relatively ample line bundle $\mathcal{L}\rightarrow\mathcal{X}.$
The fibration will mainly be assumed to be a proper submersion over
a connected base, so that all fibers are diffeomorphic (for general
quasi-projective morphisms see section \ref{sub:fam gen type}). Denote
by $\mathcal{H}_{\mathcal{L}/S}$ the space of all metrics on $\mathcal{L}$
which are fiber-wise of positive curvature. In other words, $\mathcal{H}_{\mathcal{L}/S}$
is an infinite dimensional fiber bundle over $S$ whose fibers are
of the form $\mathcal{H}_{L},$ as in the previous section. The \emph{relative
Kähler-Ricci flows} are now defined as suitable flows on $\mathcal{H}_{\mathcal{L}/S}$
such that the induced flow of curvature forms restricts to the usual
Kähler-Ricci flow fiber-wise: We will say that {}``positivity is
preserved under the flow'' if, for any initial metric with positive
curvature (in\emph{ all} directions on $\mathcal{X}),$ the evolved
metric also has positive curvature for all times, i.e. the flow induces
a flow of Kähler forms on the total space $\mathcal{X}$ of the fibration
(and not only along the fibers).

\subsubsection*{The Calabi-Yau setting}

Let us first summarize the main results in the setting when the fibers
are Calabi Yau. In this case the flow $\phi_{t}$ in $\mathcal{H}_{\mathcal{L}/S}$
is defined fiber-wise by \begin{equation}
\frac{\partial\phi_{t}}{\partial t}=\log(\frac{(dd^{c}\phi_{t})^{n}/n!}{\mu}),\label{eq:k-r flow of weights in intro}\end{equation}
 with $\mu$ a measure as in equation \ref{eq:inhome ms intr}. Of
course, adding the pull-back of a time-dependent function on the base
$S$ to the right hand side of the previous equation does not alter
the induced flows of the\emph{ fiber-wis}e\emph{ restricted }Kähler
forms $d_{X}d_{X}^{c}\phi_{t}$ , but it certainly effects the flow
of $dd^{c}\phi_{t}$ on $\mathcal{X}$ which will typically \emph{not}
preserve the initial Kähler property. 

One of the main results of the present paper is a parabolic evolution
equation along the flow \ref{eq:k-r flow of weights in intro} for
the function \[
c(\phi):=c(\phi):=\frac{1}{n}(dd^{c}\phi)^{n+1}/(d_{X}d_{X}^{c}\phi)^{n}\wedge ids\wedge d\bar{s}\]
 on $\mathcal{X}$ which is well-defined when $S$ is embedded in
$\C.$ The point is that $c(\phi)>0$ precisely when $dd^{c}\phi>0$
on $\mathcal{X}.$ The evolution equation for $c(\phi_{t})$ reads
(Theorem \ref{thm:heat eq for c in c-y}) \begin{equation}
(\frac{\partial}{\partial t}-\Delta_{\omega_{t}^{X}})c(\phi_{t})=|A_{\omega_{t}}|_{\omega_{t}^{X}}^{2}-\omega_{WP},\label{eq:parab eq for c intro}\end{equation}
 where $\omega_{t}^{X}$ denotes the flow of the fiber-wise restricted
curvature forms, $A_{\omega_{t}}$ is a certain representative of
the Kodaira-Spencer class of the fiber $\mathcal{X}_{s}$ and $\omega_{WP}$
is the pull-back to $\mathcal{X}$ of the (generalized) Weil-Petersson
 form on the base $S;$ by a result of Tian \cite{ti0} and Todorov
\cite{to} that we will reprove $\omega_{WP}$ can be represented
by the global squared $L^{2}-$norm of $A_{\omega_{KE}}$ for $\omega_{KE}$
the unique Ricci flat metric in $c_{1}(L).$ Applying the maximum
principle then gives (Corollary \ref{cor:conserv of pos along k-r})
that the initial condition $dd^{c}\phi_{0}>0$ implies that \begin{equation}
dd^{c}\phi_{t}>-t\omega_{WP}\label{eq:lower bd on pos in intro}\end{equation}
 (and similarly when the initial curvature is \emph{semi}-positive).
By its very definition $\omega_{WP}$ vanishes at $s$ precisely when
the infinitesimal deformation of the complex structure on the fibers
$\mathcal{X}_{s}$ (i.e. the Kodaira-Spencer class) vanishes at $s.$
Hence, if the fibration $\pi:\,\mathcal{X}\rightarrow S$ is holomorphically
trivial, then, by inequality \ref{eq:lower bd on pos in intro}, positivity
is indeed preserved along the flow. This latter situation appears
naturally in Kähler geometry. Indeed, if the base $S$ is an annulus
in $\C$ and $\phi_{s}$ is rotationally invariant, then $\phi_{s}$
corresponds to a curve in $\mathcal{H}_{L}$ and $c(\phi_{s})$ is
then the geodesic curvature of the curve $\phi_{s}$ when $\mathcal{H}_{L}$
is equipped with its symmetric space Riemannian metric (see \cite{ch}
and references therein). In the non-normalized $K_{X}-$setting (see
section \ref{sec:The-(anti-)-canonical}) the equation \ref{eq:parab eq for c intro}
can be seen as a parabolic generalization of a very recent elliptic
equation of Schumacher \cite{sc}.

Similarly, the {}``quantized'' version of the previous setting is
studied, i.e. the relative version of Donaldson's iteration. It gives
an iteration on the space of all Hermitian metrics $H$ on the finite
rank vector bundle $\pi_{*}k\mathcal{L}\rightarrow S$ for any positive
integer $k$ (recall that the fiber of $\pi_{*}\mathcal{L}$ over
$s$ is, by definition, the space $H^{0}(\mathcal{X}_{s},\mathcal{L}_{s})$
of all global holomorphic sections on the fiber $\mathcal{X}_{s}$
with values in $\mathcal{L}_{|\mathcal{X}_{s}}).$ More precisely,
we will study the equivalent fiber-wise iteration $\phi_{m}^{(k)}$
in $\mathcal{H}_{\mathcal{L}/S}$ obtained by applying the (scaled)
Fubini-Study map to Donaldson's iteration. It will be called the \emph{relative
Bergman iteration at level $k.$ }When the discrete time $m$ tends
to infinity it is shown (Theorem \ref{thm:conv of bergman iter at level k })
that the iteration converges to a fiber-wise balanced weight: \[
\phi_{m}^{(k)}\rightarrow\phi_{\infty}^{(k)}\]
 in the $\mathcal{C}^{\infty}-$topology on $\mathcal{X}_{s}$, uniformly
with respect to $s.$ It is also observed that an analogue of the
inequality \ref{eq:lower bd on pos in intro} holds, i.e.\emph{ }\begin{equation}
dd^{c}\phi_{m}^{(k)}\geq-\frac{k}{m}\omega_{WP}.\label{eq:lower pos along bergman in intro}\end{equation}
 This turns out to be a simple consequence of a recent theorem of
Berndtsson \cite{bern0} about the curvature of vector bundles of
the form $\pi_{*}(\mathcal{L}+K_{\mathcal{X}/S}).$ We also confirm
Donaldson's expectation about the semi-classical limit when the level
$k$ tends to infinity. More precisely, it is shown that, in the double
scaling limit where $m/k\rightarrow t$ the (relative) Bergman iteration
at level $k$ approaches the (relative) Kähler-Ricci flow \ref{eq:k-r flow of weights in intro}:
\begin{equation}
\phi_{m}^{(k)}\rightarrow\phi_{t}\label{eq:doubl scal limit intro}\end{equation}
 uniformly on $\mathcal{X}.$ In particular, combining this convergence
with \ref{eq:lower pos along bergman in intro} gives an alternative
proof of the semi-positivity in the inequality \ref{eq:lower bd on pos in intro}.
Moreover, by taking $m=m_{k}$ such that $m/k\rightarrow\infty$ this
gives a dynamical construction of solutions to the inhomogeneous Monge-Ampère
equation \ref{eq:inhome ms intr} in the setting where $\mu$ is any
fixed volume form (Corollary \ref{cor:conv of balanced etc cy}).

\subsubsection*{The (anti-) canonical setting}

The previous results are also shown to have analogues in the setting
when the ample line bundle $\mathcal{L}$ is either the relative canonical
line bundle $K_{\mathcal{X}/S}$ over $\mathcal{X}$ or its dual,
which we write as $\mathcal{L=}\pm K_{\mathcal{X}/S}$ in our additive
notation. The starting point is the fact that any metric $h=e^{-\phi}$
on $\pm K_{X}$ induces, by the very definition of $K_{X},$ a volume
form on $X$ which may be written suggestively as $e^{\pm\phi}.$
The previous constructions, i.e. the relative Kähler-Ricci flows and
the Donaldson iteration, can then be repeated word for word for these
$\phi-$dependent measures $\mu=\mu(\phi)$. For example, the relative
Kähler-Ricci flows are are defined by \begin{equation}
\frac{\partial\phi_{t}}{\partial t}=\log(\frac{(dd^{c}\phi_{t})^{n}/n!}{e^{\pm\phi_{t}}}),\label{eq:intro non-norma flow in pm kx}\end{equation}
and we obtain (Theorem \ref{thm: heat eq in kx-setting}) a corresponding
parabolic equation for $c(\phi_{t}):$ \[
\left(\frac{\partial}{\partial t}-(\Delta_{\omega_{t}^{X}}-\pm1)\right)c(\phi_{t})=|A_{\omega_{t}}|_{\omega_{t}^{X}}^{2}\]
and as a consequence \emph{the flows always preserve positivity }(Corollary
\ref{cor:pos along flow in kx-s}) in these settings. In fact, in
the case of infinitesimally non-trivial fibration the flows will even
improve the positivity, i.e. any initial weight which is merely semi-positively
curved instantly becomes positively curved under the flows. In the
$+K_{X}-$setting the unique fixed point of the flow \ref{eq:intro non-norma flow in pm kx}
is the (fiber-wise) \emph{normalized} Kähler-Einstein weight uniquely
determined by 

\[
e^{-\phi_{KE}}=(\omega_{KE})^{n}/n!,\]
 where $\omega_{KE}$ is the unique Kähler-Einstein metric on $X$
(Corollary \ref{cor:conv of non-normal flow}). The corresponding
elliptic equation for $c(\phi_{KE})$ was first obtained by Schumacher
\cite{sc} who used it to deduce the following interesting result:
$\phi_{KE}$ is always semi-positively curved on the total space of
$\mathcal{X}$ and strictly positively curved for an infinitesimally
non-trivial fibration. As a consequence he obtained several applications
to the geometry of moduli spaces. For example, applied to the case
when $\mathcal{X}\rightarrow S$ is the universal curve over the Teichmuller
space of Riemann surfaces of genus $g\geq2$ it gives, when combined
with Berndtsson's Theorem \ref{thm:(Berndtsson).-Let-}, a new proof
of the hyperbolicity result of Liu-Sun-Yau \cite{lsy} saying that
the curvature of the Weil-Petersson metric on the Teichmuller space
is dual Nakano positive.

In the $-K_{X}-$setting the relative Kähler-Ricci flow will diverge
for generic initial data. But using the convergence on the level of
Kähler forms, established by Perelman and Tian-Zhu, will show that
it in case the Fano manifold $X$ admits a unique positively curved
Kähler-Einstein metric $\omega_{KE},$ the flow does convergence to
a weight for $\omega_{KE}$ in the \emph{normalized}\emph{\noun{ $\pm K_{X}-$}}\emph{setting.
}This latter setting is simple\noun{ }obtained by normalizing the
volume forms $e^{\pm\phi}$ used above. 

We will also use the relative Bergman iteration to obtained a {}``quantized''
version of Schumacher's result: the canonical {}``semi-balanced''
metric at level $k$ on $K_{\mathcal{X}/S},$ which by definition
is fiber-wise normalized and balanced, is smooth with semi-positive
curvature on $\mathcal{X}$ (Corollary \ref{cor:pos of semi-bal when kx amp})
and strictly positively curved in the case of an infinitesimally non-trivial
fibration. As a consequence the semi-balanced metric gives an alternative
to the canonical metric on $kK_{\mathcal{X}/S}$ introduced by Narasimhan-Simha
(see \cite{ns,ka,ts2,b-p2} for positivity properties of this latter
metric). In section \ref{sub:fam gen type} some of the results concerning
the setting when $K_{X}$ is ample are generalized to projective fibrations
of varieties of general type (i.e. $K_{X}$ is merely big) and the
corresponding canonical semi-balanced metric is shown to have a positive
curvature current (Theorem \ref{thm:pos of semi-bal on v general type}).
Relations to deformation invariance of plurigenera \cite{si} are
also briefly discussed.

\subsection{Further relations to previous results}

A variant of Donaldson iteration (but with a single parameter $k)$
in the $K_{X}-$setting was introduced by Tsuji \cite{ts}. He proved
convergence in the $L^{1}-$topology towards the normalized Kähler-Einstein
weight $\phi_{KE}$ in the large $k-$limit (see \cite{s-w} for a
proof of uniform convergence) and deduced the \emph{semi}-positivity
result for $\phi_{KE}$ of Schumacher referred to above. These works
of Tsuji and Schumacher provided an important motivation for the present
one. Steve Zelditch has also informed the author of a joint work in
progress with Jian Song, where they show that the linearization of
Tsuji's iteration at the fixed point coincides with the linearization
of the Kähler-Ricci flow. It should also be pointed out that another
discretization of the Kähler-Ricci flow on a Fano manifold was studied
by Rubinstein \cite{ru} and Keller \cite{ke}.

The $C^{0}-$convergence of the Bergman iteration at a fixed level
$k$ in the Calabi-Yau setting (or more generally in the setting of
a fixed measure $\phi)$ was pointed out by Donaldson in \cite{do3}
and the proof was sketched. Sano provided an explicit proof in the
constant scalar curvature setting \cite{sa} (see section \ref{sub:Comparison-with-the}) 

It is also interesting to compare with the very recent work of Fine
\cite{fi} concerning the constant scalar curvature setting. He shows
that a continuous version of Donaldson's iteration in this latter
setting, called balancing flows, converges to the Calabi flow, when
the latter flow exists. Julien Keller and Xiaodong Cao have informed
the author of a joint work in progress where an analogue of Fine's
balancing flows in the Calabi-Yau setting (or more generally in the
setting of a fixed volume form $\mu$) is shown to converge to a flow
on metrics, which however is different than the Kähler-Ricci flow.

There are also, at last tangential, relations to the work of Gross-Wilson
\cite{g-w}, where fibrations with Calabi-Yau fibers are considered.
In particular, they construct certain semi-flat Kähler metrics $\omega$
on the fibration $\mathcal{X},$ i.e. $\omega$ is fiber-wise Ricci
flat. Such metrics first appeared in the string theory literature
\cite{g-s-v-y}. In this terminology the inequality \ref{eq:lower bd on pos in intro}
shows that the relative Kähler-Ricci flow deforms any given Kähler
metric to a semi flat one, when there is no variation of the moduli
of the complex structure of the fibers. More generally, this latter
statement holds in a double scaling limit when the variation of the
complex structure is very small in the sense that $\omega_{FS}(s_{t})t\rightarrow0$
as $t\rightarrow\infty.$ 

A Kähler-Ricci flow on \emph{compact} fibrations $\mathcal{X}$ with
Calabi-Yau fibers was also considered very recently by Song-Tian.
But they consider the usual (i.e. non-relative) Kähler-Ricci flow
(with $r=1)$ when the canonical line bundle is only semi-ample and
relatively trivial (i.e. the base $S$ is the canonical model of $\mathcal{X}).$
They prove that the flow collapses the fibers so that the limit is
the pull-back of metric on the base $S$ solving a {}``twisted''
Kähler-Einstein equation where the twist is described by the (generalized)
 Weil-Petersson  form  $\omega_{FS}.$

\subsubsection*{Acknowledgments}

I would like to thank Bo Berndtsson for interesting conversations
on the topic of the present paper, in particular in connection to
\cite{bern0,bern2}. In the first ArXiv versions (1 and 2) of the
present paper the very recent result about\emph{ strict }positivity
of direct image bundles in \cite{bern2} was not available, but in
the present version it is used to improve the positivity properties
of the Bergman iteration in the case of infinitesimally non-trivial
fibrations. Also thanks to Valentino Tosatti and Yanir Rubinstein
for comments on a draft of the present paper (in particular thanks
to Valentino Tosatti for pointing out the reference \cite{g-w} in
connection to semi-flat metrics and the references in remark \ref{rem:div of flow},
which was was stated inaccurately in the previous version).

\subsection{Organization of the paper}

In section \ref{sec:The-general-setting} a general setting is introduced
and the associated relative Kähler-Ricci flow and its quantization
are defined. General convergence criteria for the flows are given.
In the following two sections the general setting is applied to get
convergence results in particular settings of geometric relevance:
the Calabi-Yau setting (section \ref{sec:The-Calabi-Yau-setting})
and the (anti-) canonical setting \ref{sec:The-(anti-)-canonical},
respectively. The new feature of these convergence results for the
Kähler-Ricci flows is that the convergence takes place on on the level
of weights, i.e. for the\emph{ potentials} of the evolving Kähler
metrics. Furthermore, the main question whether {}``positivity in
families'' is preserved under the flows is studied in these two sections
and relations to Weil-Petersson geometry are also discussed. It is
also shown that the quantized flows converge to Kähler-Ricci flows
in the large tensor power limit. Applications to canonical metrics
on relative canonical bundles are also given.

\tableofcontents{}

\section{\label{sec:The-general-setting}The general setting}

In this section we will consider a general setup that will subsequently
be applied to particular settings in sections \ref{sec:The-Calabi-Yau-setting},
\ref{sec:The-(anti-)-canonical}. 

We assume given a holomorphic submersion $\pi:\,\mathcal{X}\rightarrow S$
of relative dimension $n$ over a connected base and a relatively
ample line bundle $\mathcal{L}\rightarrow\mathcal{X}.$ In the absolute
case when $S$ is a point we will often use the notation $L\rightarrow X$
for the corresponding ample line bundle. In this latter case we will
write $\mathcal{H}_{L}$ for the space of all smooth Hermitian metrics
on $L$ with positive curvature form. In the relative case we will
denote by $\mathcal{H}_{\mathcal{L}/S}$ the space of all metrics
on $\mathcal{L}$ which are fiber-wise of positive curvature. We will
denote by $c_{1}(L)$ the first Chern class of $L,$ normalized so
that it lies in $H^{1,1}(X)\cap H^{2}(X,\Z).$ To simplify the formulas
to be discussed we will also assume that the relative volume of $\mathcal{L}$
is equal to one, i.e. \[
V:=\int_{X}c^{1}(L)^{n}/n!=1\]
for some (and hence any) fiber $X.$ The general formulas may then
be obtained by trivial scalings by $V$ at appropriate places. When
considering tensor powers of $L,$ written as $kL$ in additive notation,
we will always assume that $kL$ is very ample (which is true for
$k$ sufficiently large).

\subsection{\label{sub:The-weight-notation}The weight notation for $\mathcal{H}_{L}$}

It will be convenient to use the {}``weight'' representation of
a metric $h$ on $L:$ locally, any metric $h$ on $L$ may be represented
as $h=e^{-\phi},$ where $h$ is the point-wise norm of a local trivializing
section $s$ of $L.$ We we will call the additive object $\phi$
a {}``weight'' on $L.$ One basic feature of this formalism is that
even though the functions representing $\phi$ are merely locally
defined the normalized curvature form of the metric $h$ may be expressed
as \[
\omega_{\phi}:=dd^{c}\phi:=\frac{i}{2\pi}\partial\bar{\partial}\phi\]
 which is hence globally well-defined (but it does not imply that
$\omega$ is exact!). The normalizations are made so that $[\omega_{\phi}]=c_{1}(L)\in H^{1,1}(X)\cap H^{2}(X,\Z).$
In the absolute setting we will denote by $\mathcal{H}_{L}$ the space
of all weights such that $\omega_{\phi}>0.$ In other words, the map
$\phi\mapsto\omega_{\phi}$ establishes an isomorphism between $\mathcal{H}_{L}/\R$
and the space of Kähler metrics in $c_{1}(L).$ In the relative setting
we will denote by $\mathcal{H}_{\mathcal{L}/S}$ the space of all
smooth weights on $\mathcal{L}$ such that the restriction to each
fiber is of positive curvature. 

After fixing a reference weight $\phi_{0}$ in $\mathcal{H}_{L}$
the map $\phi\mapsto u:=\phi-\phi_{0}$ identifies the affine space
of all smooth weights on $L$ with the vector space $\mathcal{C}^{\infty}(X).$
Moreover, the subspace $\mathcal{H}_{L}$ of all positively curved
smooth weights gets identified with the open convex subspace $\mathcal{H}_{\omega}:=\{u:\,\,\, dd^{c}u+\omega_{0}>0\}$
of $\mathcal{C}^{\infty}(X),$ where $\omega_{0}$ denotes the Kähler
form $dd^{c}\phi_{0}.$ The $L^{1}-$closure of $\mathcal{H}_{\omega}$
is usually called the space of all\emph{ $\omega_{0}-$plurisubharmonic
functions} in the literature \cite{g-z}. In fact, all the results
in the present paper whose formulation does not use that the given
class $[\omega_{0}]$ is integral are valid in the more general setting
when $\mathcal{H}_{L}$ is replaced by $\mathcal{H}_{\omega}$ (with
essentially the same proofs). However, since the quantized setting
(section \ref{sec:Quantization:-The-Bergman general}) only makes
sense for integral classes we will stick to the weight notation in
the following.

\subsection{\label{sub:The-measure-mu and functionals}The measure $\mu_{\phi}$
and associated functionals on $\mathcal{H}_{L}$ }

First consider the absolute case when $S$ is a point. In each particular
setting studied in sections \ref{sec:The-Calabi-Yau-setting}, \ref{sec:The-(anti-)-canonical}
we will assume given a function $\mu$ on $\mathcal{H}_{L},$ $\phi\mapsto\mu(\phi)$
(also denoted by $\mu_{\phi})$ taking values in the space of volume
forms on $X,$ which is \emph{exact }in the following sense. First
observe that we may identify $\mu(\phi)$ with a one-form on the affine
space $\mathcal{H}_{L}$ by letting its action on a tangent vector
$v\in\mathcal{C}^{\infty}(X)$ at the point $\phi\in\mathcal{H}_{L}$
be defined by \[
\left\langle \mu(\phi),v\right\rangle :=\int_{X}v\mu(\phi).\]
The assumption on $\mu(\phi)$ is then simply that this one-form is
closed and hence exact, i.e. there is a functional $I_{\mu}$ on $\mathcal{H}_{L}$
such that $dI_{\mu}=\mu:$ \begin{equation}
\frac{dI_{\mu}(\phi_{t})}{dt}=\int_{X}\frac{\partial\phi_{t}}{\partial t}\mu_{\phi_{t}}\label{eq:i functional as primit}\end{equation}
 for any path $\phi_{t}$ in $\mathcal{H}_{L}$. The functional is
determined up to a constant which will be fixed in each particular
setting to be studied. We will also assume that for any fixed $v\in\mathcal{C}^{\infty}(X)$
the functional $\phi\mapsto\left\langle \mu(\phi),v\right\rangle $
is continuous with respect to the $L^{\infty}-$topology on $\mathcal{H}_{L}.$

Two particular examples of such exact one-forms and their anti-derivatives
that will be used repeatedly are as follows:
\begin{itemize}
\item The Monge-Ampère measure $\phi\mapsto(dd^{c}\phi)^{n}/n!:=MA(\phi).$
Its anti-derivative \cite{ma} will be denoted by $\mathcal{E}(\phi),$
normalized so that $\mathcal{E}(\phi_{0})=0$ for a fixed reference
weight in $\phi_{0}$ in $\mathcal{H}_{L}.$ Integrating along line
segments in $\mathcal{H}_{L}$ gives an explicit expression for $\mathcal{E},$
but it will not be used here.
\item $\phi\mapsto\mu_{0}$ for a volume form $\mu_{0}$ on $X,$ fixed
once and for all with $I_{\mu_{0}}(\phi):=\int_{X}(\phi-\phi_{0})\mu_{0}.$
Since we have already fixed a reference weight $\phi_{0}$ it will
be convenient to take $\mu_{0}:=(dd^{c}\phi_{0})^{n}/n!$ 
\end{itemize}
Given $\mu=\mu(\phi)$ we define the associated functional \[
\mathcal{F}_{\mu}:=\mathcal{E}-I_{\mu},\]
 By construction its critical points in $\mathcal{H}_{L}$ are precisely
the solutions to the Monge-Ampère equation \begin{equation}
(dd^{c}\phi)^{n}/n!=\mu(\phi)\label{eq:general ma eq}\end{equation}
We will say that $\mu(\phi)$ is \emph{normalized }if it is a probability
measure for all $\phi.$ Equivalently, this means that $I_{\mu}$
is \emph{equivariant} under scalings, i.e. $I_{\mu}(\phi+c)=I_{\mu}(\phi)+c$
which in turn is equivalent to $\mathcal{F}_{\mu}$ being \emph{invariant}
under scalings. 

In the relative setting we assume that $\mu_{s}(\phi)$ is a smooth
family of measures on the fibers $\mathcal{X}_{s}$ as above, parametrized
by $s\in S.$

\subsubsection*{Properness and coercivity}

We first recall the definition of the well-known $J-$functional,
defined with respect to a fixed reference weight $\phi_{0}$ (see
\cite{bbgz} for a general setting and references). It is the natural
higher dimensional generalization of the (squared) Dirichlet norm
on a Riemann surface and it will play the role of an exhaustion function
of $\mathcal{H}_{L}/\R$ (but without specifying any topology!). In
our notation $J$ is simply given by the scale invariant function
\[
J=-\mathcal{F}_{\mu_{0}}\]
 We will then say that a functional $\mathcal{G}$ is \emph{proper}
if \[
J\rightarrow\infty\implies\mathcal{G}\rightarrow\infty.\]
 and \emph{coercive} if for there exists $\delta>0$ and $C_{\delta}$
such that \[
J\rightarrow\infty\implies\mathcal{G\geq}\delta J-C_{\delta}.\]
Note that $\delta$ may be taken arbitrarily small at the expense
of increasing $C_{\delta}.$ In many geometric applications properness
(and coercivity) of suitable functionals can be thought as analytic
versions of algebro-geometric stability (compare remark \ref{rem:stab}).

\subsection{\label{sub:The-relative-K=0000E4hler-Ricci general}The relative
Kähler-Ricci flow with respect to $\mu_{\phi}$}

Given an initial weight $\phi_{0}\in\mathcal{H}_{\mathcal{L}/S}$
the relative Kähler-Ricci flow in $\mathcal{H}_{\mathcal{L}/S}$ is
defined by the fiber-wise parabolic Monge-Ampère equation 

\begin{equation}
\frac{\partial\phi_{t}}{\partial t}=\log\frac{(dd^{c}\phi_{t})^{n}/n!}{\mu(\phi_{t})}\label{eq:general kr flow on weights}\end{equation}
 for $\phi_{t}$ smooth over $\mathcal{X}\times[0,T],$ where $T\geq0.$
We will make the following assumptions on the flow which will all
be satisfied in the particular settings studied in sections \ref{sec:The-Calabi-Yau-setting},
\ref{sec:The-(anti-)-canonical}.

\subsubsection*{Analytical assumptions on the flow: }
\begin{itemize}
\item \emph{Existence: }The flow exists and is smooth over $\mathcal{X}\times[0,\infty[$
\item \emph{Uniqueness: }Any fixed point in $\mathcal{H}_{L}$ of the flow
is unique mod $\R$
\item \emph{Stability:} For any $l>0$ and $M>0$ there is a constant $B_{l,M}$
only depending on the upper bound on the $\mathcal{C}^{l}-$norm of
the initial weight $\phi_{0}$ (with respect to a fixed reference
weight) and a lower bound on the absolute value of $dd^{c}\phi_{0}$
such that \begin{equation}
\left\Vert \phi_{t}-\phi_{0}\right\Vert _{\mathcal{C}^{l}(X\times[0,M]}\leq B_{l,M}\label{eq:stab estim}\end{equation}
 (locally uniformly with respect to $s$ in the relative setting)
\end{itemize}
It follows immediately that $\phi$ is fixed under the flow iff it
solves the Monge-Ampère equation \ref{eq:general ma eq}. Note that
since we have assume that $Vol(L)=1$ a necessary condition to be
stationary is hence that $\int_{X}\mu_{\phi}=1.$ For any solution
$\phi_{t}$ and fixed fiber $X=\mathcal{X}_{s}$ the Kähler metrics
$\omega_{t}$ on $X$ obtained as the restricted curvature forms of
$\phi_{t}$ hence evolve according to \begin{equation}
\frac{\partial\omega_{t}}{\partial t}=-\mbox{Ric}\omega_{t}-\eta_{\mu},\label{eq:general kr on kähler metrics}\end{equation}
 where $\mbox{Ric}\omega_{t}$ in the Ricci curvature of the Kähler
metric $\omega_{t}$ and $\eta_{\mu}=dd^{c}\log\mu(\phi).$ 

Thanks to the following simple lemma the Kähler-Ricci flow is {}``gradient-like''
for the functional $\mathcal{F}_{\mu}.$ For the Fano case see \cite{ct}.
\begin{lem}
The functional $\mathcal{F}_{\mu}$ is increasing along the \emph{Kähler-Ricci
flow on $\mathcal{H}_{L}$ (defined with respect to $\mu_{\phi}).$
Moreover, it is }strictly \emph{increasing at $\phi_{t}$ unless $\phi_{t}$
is stationary.}\end{lem}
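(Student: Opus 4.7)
The plan is to compute $\frac{d}{dt}\mathcal{F}_\mu(\phi_t)$ explicitly along the flow and recognize the resulting integrand as manifestly nonnegative via an elementary scalar inequality.

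First I would use the definition $\mathcal{F}_\mu = \mathcal{E} - I_\mu$ together with the defining primitive properties of $\mathcal{E}$ and $I_\mu$: namely $d\mathcal{E}(\phi) = MA(\phi) = (dd^c\phi)^n/n!$ and $dI_\mu(\phi) = \mu(\phi)$, both understood as one-forms acting on tangent vectors $v \in \mathcal{C}^\infty(X)$ by integration against the corresponding measure (see equation \ref{eq:i functional as primit}). The smoothness and stability assumptions on the flow guarantee that $\phi_t$ is a smooth curve in $\mathcal{H}_L$, so the chain rule gives
\begin{equation*}
\frac{d}{dt}\mathcal{F}_\mu(\phi_t) = \int_X \frac{\partial \phi_t}{\partial t}\,\bigl(MA(\phi_t) - \mu(\phi_t)\bigr).
\end{equation*}

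Next I would substitute the flow equation \ref{eq:general kr flow on weights}, writing $f_t := MA(\phi_t)/\mu(\phi_t)$, which is a strictly positive smooth function on $X$ since $\phi_t \in \mathcal{H}_L$ and $\mu(\phi_t)$ is a smooth volume form. Then $\partial_t\phi_t = \log f_t$ and $MA(\phi_t) - \mu(\phi_t) = (f_t - 1)\mu(\phi_t)$, so that
\begin{equation*}
\frac{d}{dt}\mathcal{F}_\mu(\phi_t) = \int_X (\log f_t)(f_t - 1)\,\mu(\phi_t).
\end{equation*}

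The integrand is pointwise nonnegative by the elementary inequality $(\log x)(x-1) \geq 0$ for all $x > 0$, with equality iff $x = 1$. This immediately yields monotonicity. For the strict statement, continuity of $f_t$ shows that the integral vanishes iff $f_t \equiv 1$ on $X$, i.e.\ iff $MA(\phi_t) = \mu(\phi_t)$, which by the observation following the analytical assumptions is precisely the condition that $\phi_t$ be stationary for the flow. No serious obstacle is anticipated: the computation is formal once the primitive properties of $\mathcal{E}$ and $I_\mu$ and the smoothness of $\phi_t$ are in hand, both of which are guaranteed by the standing hypotheses.
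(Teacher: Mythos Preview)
Your proof is correct and follows essentially the same approach as the paper: differentiate $\mathcal{F}_\mu$ along the flow using the primitive properties of $\mathcal{E}$ and $I_\mu$, substitute the flow equation, and observe that the integrand $(\log f_t)(f_t-1)$ is pointwise nonnegative since the two factors have the same sign. Your treatment of the strict case is slightly more explicit than the paper's, but the argument is the same.
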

\begin{proof}
Differentiating along the flow gives $\frac{d\mathcal{F}(\phi_{t})}{dt}=$
\[
=\int_{X}\log(\frac{MA(\phi_{t})}{\mu(\phi_{t})})(MA(\phi_{t})-\mu(\phi_{t})=\int_{X}\log(\frac{MA(\phi_{t})}{\mu(\phi_{t})})(\frac{MA(\phi_{t})}{\mu(\phi_{t})}-1)\mu(\phi_{t})\geq0\]
 where the last inequality follows since both factors in the last
integrand clearly have the same sign.
\end{proof}
If moreover, $\mu(\phi)$ is normalized then both terms appearing
in the definition of $\mathcal{F}_{\mu}$ are monotone:
\begin{lem}
Assume that $\mu(\phi)$ is normalized.  Then the functionals $-I_{\mu}$
and $\mathcal{E}$ are both increasing along the \emph{Kähler-Ricci
flow on $\mathcal{H}_{L}$ with respect to $\mu(\phi).$ Moreover,
they are }strictly \emph{increasing at $\phi_{t}$ unless $\phi_{t}$
is stationary.}\end{lem}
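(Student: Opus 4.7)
The plan is to compute the time derivatives of $\mathcal{E}(\phi_t)$ and $-I_\mu(\phi_t)$ along the flow, substitute the flow equation, and recognize each resulting integral as a relative entropy (Kullback--Leibler divergence) between two probability measures; Jensen's inequality then yields non-negativity and equality characterization.

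First I would note the key integrability consequence of the normalization hypotheses: since $\mathrm{Vol}(L)=1$ we have $\int_X MA(\phi_t)=1$, and since $\mu(\phi)$ is assumed normalized we also have $\int_X \mu(\phi_t)=1$. Thus at every time $t$ both $MA(\phi_t)$ and $\mu(\phi_t)$ are genuine probability measures on $X$.

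Next I would differentiate. By the defining relations $d\mathcal{E}=MA$ and $dI_\mu=\mu$ together with \eqref{eq:i functional as primit} and the flow equation \eqref{eq:general kr flow on weights},
\begin{equation*}
\frac{d\mathcal{E}(\phi_t)}{dt}=\int_X \log\!\Bigl(\frac{MA(\phi_t)}{\mu(\phi_t)}\Bigr)\,MA(\phi_t),\qquad \frac{d(-I_\mu)(\phi_t)}{dt}=\int_X \log\!\Bigl(\frac{\mu(\phi_t)}{MA(\phi_t)}\Bigr)\,\mu(\phi_t).
\end{equation*}
Each of these expressions is the relative entropy of one of the two probability measures with respect to the other. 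By Jensen's inequality applied to the strictly convex function $x\mapsto x\log x$ (equivalently, to $-\log$), the relative entropy of two probability measures is always non-negative, with equality if and only if the two measures coincide. Hence both derivatives are non-negative, proving monotonicity. Moreover equality in either derivative at time $t$ forces $MA(\phi_t)=\mu(\phi_t)$, which by the assumption that fixed points solve \eqref{eq:general ma eq} means $\phi_t$ is stationary; otherwise the derivatives are strictly positive, giving strict monotonicity as claimed.

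There is essentially no hard step here: the argument is the standard $H$-theorem/Jensen computation, and the only subtlety is observing that the normalization hypothesis on $\mu(\phi)$ is precisely what is needed to interpret the two integrals as KL divergences. Without normalization, one only gets the combined monotonicity of $\mathcal{F}_\mu=\mathcal{E}-I_\mu$ proved in the previous lemma, since the two separate integrals fail to have a fixed sign when the measures are not normalized to the same mass.
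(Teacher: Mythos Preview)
Your proof is correct and takes essentially the same approach as the paper: both compute the time derivatives, use the normalization to view $MA(\phi_t)$ and $\mu(\phi_t)$ as probability measures, and then apply Jensen's inequality for the concave function $\log$ (which you phrase equivalently as non-negativity of relative entropy). The paper's version is slightly terser but the content is identical.
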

\begin{proof}
Differentiating along the flow gives \[
-\frac{dI(\phi_{t})}{dt}=-\int_{X}\log(\frac{MA(\phi_{t})}{\mu(\phi_{t})})\mu(\phi_{t})\geq0\]
 using Jensen's inequality applied to the concave function $f(t)=\log t$
on $\R_{+}$ in the last step (recall that $MA(\phi_{t}),\mu(\phi_{t})$
are both probability measures). Similarly,

\[
\frac{d\mathcal{E}(\phi_{t})}{dt}=\int_{X}\log(\frac{MA(\phi_{t})}{\mu(\phi_{t})})MA(\phi_{t})=-\int_{X}\log(\frac{\mu(\phi_{t})}{MA(\phi_{t})})MA(\phi_{t})\geq0,\]
 again using Jensen's inequality, but with the roles of $MA(\phi_{t}),\mu(\phi_{t})$
reversed. The statement about strict monotonicity also follows from
Jensen's inequality since $f(t)=\log t$ is strictly concave.
\end{proof}
From the previous lemma we deduce the following compactness property
of the flow.
\begin{lem}
\label{lem:compactness of kr flow}Assume that $\mu(\phi)$ is normalized
and that the associated functional $-\mathcal{F}_{\mu}$ is coercive.
Then there is a constant $C$ such that $J(\phi_{t})\leq C$ and $\int|\phi_{t}-\phi_{0}|\mu_{0}\leq C$
along the Kähler-Ricci flow for $\phi_{t}$ (with respect to $\mu(\phi)).$\end{lem}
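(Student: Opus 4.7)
The plan is to combine the monotonicities from the two preceding lemmas with the coercivity hypothesis, and then to convert the resulting bounds on a few canonical functionals into the desired $L^1$-estimate via the standard compactness of normalized $\omega_0$-plurisubharmonic potentials.

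First, the bound $J(\phi_t)\leq C_1$: monotonicity of $\mathcal{F}_\mu$ along the flow gives $\mathcal{F}_\mu(\phi_t)\geq \mathcal{F}_\mu(\phi_0)$, while coercivity supplies $-\mathcal{F}_\mu(\phi)\geq \delta J(\phi)-C_\delta$ on all of $\mathcal{H}_L$; combining the two bounds $J(\phi_t)$ from above. Since $J\geq 0$, coercivity simultaneously yields $\mathcal{F}_\mu(\phi_t)\leq C_\delta$. Normalization of $\mu(\phi)$ now lets us bring in the second lemma: both $\mathcal{E}$ and $-I_\mu$ are increasing along the flow, so $\mathcal{E}(\phi_t)\geq \mathcal{E}(\phi_0)=0$ and $I_\mu(\phi_t)\leq I_\mu(\phi_0)$. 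Since $\mathcal{E}=\mathcal{F}_\mu+I_\mu$, this traps $\mathcal{E}(\phi_t)$ in the interval $[0,C_2]$ with $C_2:=C_\delta+I_\mu(\phi_0)$, and the identity $I_{\mu_0}=J+\mathcal{E}$ then confines the mean $I_{\mu_0}(\phi_t)=\int_X(\phi_t-\phi_0)\mu_0$ to $[0,C_1+C_2]$.

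Second, the passage from the mean bound to an $L^1$-bound. Write $u_t:=\phi_t-\phi_0$ and $\tilde u_t:=u_t-\sup_X u_t$. Then $\tilde u_t$ is $\omega_0$-plurisubharmonic with $\sup_X \tilde u_t=0$, so the classical $L^1$-compactness of the set of such normalized $\omega_0$-psh potentials on the compact K\"ahler manifold $X$ supplies a uniform bound $\|\tilde u_t\|_{L^1(\mu_0)}\leq C_*$. The identity
\begin{equation*}
I_{\mu_0}(\phi_t)\;=\;\sup_X u_t\,-\,\|\tilde u_t\|_{L^1(\mu_0)}
\end{equation*}
(valid since $V=1$) together with $I_{\mu_0}(\phi_t)\in[0,C_1+C_2]$ pins down $\sup_X u_t\in [0,C_1+C_2+C_*]$, and the triangle inequality $\|u_t\|_{L^1(\mu_0)}\leq \sup_X u_t+\|\tilde u_t\|_{L^1(\mu_0)}$ closes the argument.

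The main obstacle is the scaling ambiguity inherent in the weight picture: $J$ is invariant under $\phi\mapsto\phi+c$, so the $J$-bound by itself cannot control the absolute size of $u_t$. The resolution is to exploit \emph{both} monotonicities $\mathcal{E}\nearrow$ and $I_\mu\searrow$ supplied by the normalization of $\mu(\phi)$: they together trap $\mathcal{E}(\phi_t)$ in a bounded interval and thereby fix the scaling of $\phi_t$ up to a uniform constant, which is exactly what is needed to turn the compactness of normalized $\omega_0$-psh functions into an absolute $L^1$-bound.
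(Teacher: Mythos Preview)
Your proof is correct and follows essentially the same route as the paper: combine the monotonicity of $\mathcal{F}_\mu$ with coercivity to bound $J(\phi_t)$, then use the separate monotonicities of $\mathcal{E}$ and $I_\mu$ (from the normalization of $\mu(\phi)$) together with the coercivity inequality to bound $I_{\mu_0}(\phi_t)$, and finish with the standard $L^1$-compactness of sup-normalized $\omega_0$-psh functions. The only cosmetic differences are that the paper rearranges the coercivity inequality directly as $\delta I_{\mu_0}\leq(-1+\delta)\mathcal{E}+I_\mu+C_\delta$ to get the upper bound on $I_{\mu_0}$, and then simply cites \cite{bbgz} for the pluripotential compactness step that you carry out explicitly.
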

\begin{proof}
Combining the monotonicity of $\mathcal{F}_{\mu}$ and the assumption
that $\mathcal{F}_{\mu}$ be coercive (and in particular proper) immediately
gives the first inequality $J(\phi_{t})\leq C.$ Next, by the definition
of coercivity there are $\delta\in]0,1[$ and $C_{\delta}>0$ such
that $I_{\mu}-\mathcal{E}\geq\delta I_{\mu_{0}}-\delta\mathcal{E}-C_{\delta}$
i.e. \[
\delta I_{\mu_{0}}\mathcal{\leq}(-1+\delta)\mathcal{E}+I_{\mu}+C_{\delta}.\]
along the flow. Since by the previous lemma $-\mathcal{E}$ and $I_{\mu}$
are both bounded from above along the flow it follows that there is
a constant $A$ such that $I_{\mu_{0}}\leq A$ along the flow. Finally,
by basic pluripotential theory the set $\{\phi\in\mathcal{H}_{L}:\, J(\phi)\leq C,\,\,\, I_{\mu_{0}}(\phi)\leq C\}$
is relatively compact in the $L^{1}-$topology \cite{bbgz}. This
proves the last inequality in the statement of the lemma. 
\end{proof}
The next proposition shows that, under suitable assumptions, the Kähler-Ricci
flow with respect to a normalized measure $\mu_{\phi}$ converges
on the level of weights precisely when it converges on the level of
Kähler metrics. In sections \ref{sec:The-Calabi-Yau-setting} and
\ref{sec:The-(anti-)-canonical} the proposition will be applied to
the usual geometric Kähler-Ricci flows, where the convergence is already
known to hold on the level of Kähler metrics. To simplify the notation
we will only state the result in the absolute case, the extension
to the relative case being immediate.
\begin{prop}
\label{pro:conv of curv implies conv of weights}Assume that $\mu(\phi)$
is normalized and that the associated functional $-\mathcal{F}_{\mu}$
is coercive. Let $\phi_{t}$ evolve according to the Kähler-Ricci
flow defined with respect to $\mu_{\phi}$ and write $\omega_{t}=dd^{c}\phi_{t}.$
Then the following is equivalent: 
\begin{itemize}
\item The sequence of Kähler metrics $\omega_{t}$ is relatively compact
in the $\mathcal{C}^{\infty}-$topology on $X,$ i.e. for any positive
integer $l$ the sequence $\omega_{t}$ is uniformly bounded in the
$\mathcal{C}^{l}-$norm on $X.$
\item The weights converge: $\phi_{t}\rightarrow\phi_{\infty}\in\mathcal{H}_{L}$
in the $\mathcal{C}^{\infty}-$topology on $X$ as $t\rightarrow\infty$
\item The Kähler metrics $\omega_{t}\rightarrow\omega_{\infty}$ in the
$\mathcal{C}^{\infty}-$topology on $X$, where $\omega_{\infty}$
is a Kähler form.
\end{itemize}
\end{prop}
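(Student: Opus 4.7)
The implications $(2) \Rightarrow (3) \Rightarrow (1)$ are immediate: $dd^c$ is continuous on $\mathcal{C}^{\infty}$ (so $\mathcal{C}^\infty$-convergence of weights gives $\mathcal{C}^\infty$-convergence of curvature forms, with $\omega_\infty=dd^c\phi_\infty>0$ since $\phi_\infty\in\mathcal{H}_L$), and convergence is stronger than relative compactness. I will therefore prove $(1) \Rightarrow (2)$, via three ingredients: $\mathcal{C}^{\infty}$ precompactness of the orbit $\{\phi_t\}$, identification of every subsequential limit as a stationary point, and uniqueness of such limits on the nose, not merely mod~$\R$.

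The previous lemma (compactness of the flow) supplies $J(\phi_t) \leq C$ and $\int_X |\phi_t - \phi_0|\mu_0 \leq C$ uniformly in $t$. Combined with hypothesis $(1)$, which says $\omega_t = \omega_0 + dd^c(\phi_t - \phi_0)$ is bounded in every $\mathcal{C}^l$-norm, standard elliptic estimates for the Laplacian of the reference metric (the $L^1$-bound controlling the additive harmonic part) give uniform $\mathcal{C}^\infty$ bounds on $\phi_t - \phi_0$, hence $\mathcal{C}^\infty$ precompactness of the orbit. Next, the functional $\mathcal{F}_\mu(\phi_t)$ is monotone increasing and, by the just-obtained $\mathcal{C}^0$ bound together with the standing continuity of $\phi \mapsto \langle \mu(\phi), v\rangle$ in $L^\infty$, uniformly bounded; hence it converges and its nonnegative time derivative
\[
\frac{d\mathcal{F}_\mu(\phi_t)}{dt} = \int_X \log\!\bigl(MA(\phi_t)/\mu(\phi_t)\bigr)\bigl(MA(\phi_t) - \mu(\phi_t)\bigr)
\]
is integrable on $[0,\infty)$. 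Choosing $t_k \to \infty$ along which this derivative tends to zero and passing to a $\mathcal{C}^\infty$-convergent subsequence $\phi_{t_k} \to \psi$, the pointwise nonnegativity of the integrand forces $MA(\psi) = \mu(\psi)$; i.e.\ $\psi$ is stationary.

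To pin down the additive constant, observe that since $\mu$ is normalized, the equivariance $I_\mu(\phi+c) = I_\mu(\phi)+c$ holds and, by the earlier lemma on the separate monotonicity of $-I_\mu$ and $\mathcal{E}$ in the normalized case, $-I_\mu(\phi_t)$ is monotone increasing along the flow. Bounded and continuous, $I_\mu(\phi_t)$ converges to a definite value $I_\infty$, so any two subsequential limits $\psi, \psi'$ are both stationary and share $I_\mu(\psi) = I_\mu(\psi') = I_\infty$. The uniqueness of stationary points mod $\R$ assumed in Section~2.3 gives $\psi' = \psi + c$, and equivariance then forces $c=0$, so $\psi = \psi'$. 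A standard argument promotes $\mathcal{C}^\infty$ precompactness with unique subsequential limit to full $\mathcal{C}^\infty$ convergence $\phi_t \to \phi_\infty$, proving $(2)$. The main delicate point is the interplay between the last two steps: monotonicity of $\mathcal{F}_\mu$ alone only yields a subsequence along which the Monge--Amp\`ere equation is satisfied in the limit and leaves the additive constant undetermined; the \emph{separate} monotonicity of $I_\mu$, which hinges precisely on the normalization of $\mu$, is what fixes the constant and upgrades the weak statement $\omega_t\to\omega_\infty$ to the stronger $\phi_t\to\phi_\infty$.
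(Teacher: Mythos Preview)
Your argument has a genuine gap in the step where you pass from ``there exists a stationary subsequential limit'' to ``every subsequential limit is stationary.'' You construct one specific sequence $t_k\to\infty$ along which $\frac{d}{dt}\mathcal{F}_\mu(\phi_t)\to 0$ (this is all integrability on $[0,\infty)$ gives you), extract a convergent subsequence, and correctly deduce that its limit $\psi$ is stationary. But then you assert that \emph{any} subsequential limit $\psi'$ is stationary, and this is what allows you to invoke uniqueness mod~$\R$. That assertion is unjustified: along an arbitrary sequence $s_k\to\infty$ there is no reason for the time derivative of $\mathcal{F}_\mu$ to vanish in the limit, so you cannot conclude $MA(\psi')=\mu(\psi')$. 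The remainder of the argument (fixing the constant via monotonicity and equivariance of $I_\mu$) collapses without this.

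The paper closes this gap differently, and in a way that explains why the \emph{stability} assumption on the flow is listed among the standing hypotheses in Section~\ref{sub:The-relative-K=0000E4hler-Ricci general}: given any accumulation point $\psi_0=\lim_j\phi_{t_j}$, the continuity of the time-$s$ flow map (which is exactly stability) and the semigroup property give $\lim_j\phi_{t_j+s}=\psi_s$, so the $\omega$-limit set $\mathcal{K}$ is flow-invariant. Then strict monotonicity of $\mathcal{E}$ along the flow, together with $\mathcal{E}(\psi_0)=\sup_{\mathcal K}\mathcal{E}$, forces $\psi_s=\psi_0$; hence \emph{every} accumulation point is fixed. Your approach never invokes stability, which is why it falls short. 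You could try to repair it by proving $\frac{d}{dt}\mathcal{F}_\mu(\phi_t)\to 0$ along the full flow (e.g.\ via a uniform-continuity-in-$t$ argument), but that again requires control of $\partial_t^2\phi_t$, which is essentially the stability estimate in disguise. Incidentally, the paper uses $\mathcal{E}$ rather than $I_\mu$ to pin down the additive constant; your use of $I_\mu$ is an equally valid variant once the gap above is closed.
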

\begin{proof}
Assume that the first point of the proposition holds. Then it is a
basic fact that the sequence of normalized weights $\tilde{\phi_{t}}:=\phi_{t}-C_{t}$
where $C_{t}:=I_{\mu_{0}}(\phi_{t})$ is relatively compact in the
$\mathcal{C}^{\infty}-$topology on $X$ to $\tilde{\phi}_{\infty}\in\mathcal{H}_{L}$
(as is seen by inverting the associated Laplacians). By the previous
lemma $|C_{t}|\leq D$ for some positive constant $D$ and hence $\{\phi_{t}\}$
is also relatively compact in the $\mathcal{C}^{\infty}-$topology
on $X.$ 

In the rest of the argument we will use the $\mathcal{C}^{l}-$topology
on \emph{$\mathcal{H}_{L}$ }for $l$ a large fixed integer. Let $\mathcal{K}:=\overline{\{\phi_{t}\}}$
be the closure of $\{\phi_{t}\}$ which is relatively compact in \emph{$\mathcal{H}_{L}$}
by the previous argument. Denote by $\psi_{0}$ an accumulation point
in $\mathcal{K}:$ \[
\lim_{j}\phi_{t_{j}}=\psi_{0}\]
 By continuity of the {}``time $s$ flow map'' (which follows immediately
from the stability assumption on the flow) and the semi-group structure
of the flow we deduce that \[
\lim_{j}\phi_{t_{j}+s}=\psi_{s}\]
 for any fixed $s>0.$ In other words, $\mathcal{K}$ is in fact\emph{
compact} and\emph{ invariant }under the {}``time $s$ flow map''.
Note also that by monotonicity \begin{equation}
\lim_{t}\mathcal{E}(\phi_{t})=\mathcal{E}(\psi_{0})=\sup_{\mathcal{K}}\mathcal{E}\label{eq:pf of unif conv of berg}\end{equation}
 Assume now to get a contradiction that $\psi_{s}\neq\psi_{0}.$ By
the strict monotonicity in lemma \ref{lem:monotone along bergman cy}
we have that $\mathcal{E}(\psi_{s})>\mathcal{E}(\psi_{0}),$ contradicting
\ref{eq:pf of unif conv of berg} (since $\psi_{s}\in\mathcal{K}$
as explained above). Hence, $\psi_{0}$ is a fixed point of the flow
and hence, by the uniqueness assumption on the flow, it is determined
up to an additive constant. This means that for any two limit points
$\psi_{0}$ and $\psi'_{0}$ of the flow there is a constant $C$
such that \[
\psi_{0}-\psi'_{0}=C\]
But as explained above $\mathcal{E}(\psi_{0})=\mathcal{E}(\psi'_{0})$
and hence, by the scaling equivariance of $\mathcal{E}$ it follows
that $C=0.$ All in all this means that we have shown that the flow
$\phi_{t}$ converges, in the $\mathcal{C}^{\infty}-$topology on
$X$ to a limit $\phi_{\infty}$ in $\mathcal{H}_{L},$ i.e. that
the second point of the proposition holds. The rest of the implications
are trivial. \end{proof}
\begin{rem}
The coercivity is used to make sure that the compactness property
of the flow $\phi_{t}$ holds without normalizing $\phi_{t}$ (say,
by subtracting $I_{\mu_{0}}(\phi_{t})).$ If one only assumes properness
then the same proof shows that the statement still holds upon replacing
$\phi_{t}$ by $\phi_{t}-I_{\mu_{0}}(\phi_{t})$ (which, of course,
does not effect the curvature forms). The same remark applies to Proposition
\ref{pro:conv of bergman iter at level k general} below.
\end{rem}

\subsection{\label{sec:Quantization:-The-Bergman general}Quantization: The Bergman
iteration on $\mathcal{H}_{L}$}

Proceeding fiber-wise it will be enough to consider the absolute case
when $S$ is a point and we are given an ample line bundle $L\rightarrow X.$
For any positive integer $k$ such that $kL$ is very ample the quantization
at level $k$ of the space $\mathcal{H}_{L}$ is defined as the space
$\mathcal{H}^{(k)}$ of all Hermitian metrics on the $N_{k}-$dimensional
complex vector space $H^{0}(X,kL).$ Hence, $\mathcal{H}^{(k)}$ may
be identified with the symmetric space $GL(N_{k},\C)/U(N_{k})).$
In the relative setting $\mathcal{H}^{(k)}$ is replaced by the space
of all Hermitian metrics on the rank $N_{k}-$vector bundle $\pi_{*}(k\mathcal{L})$
over the base $S$ (compare section \ref{sub:Conservation-of-positivity}). 

Fix a volume form $\mu_{\phi}$ on $X$ depending on $\phi$ as above.
Then any given $\phi\in\mathcal{H}_{L}$ induces a Hermitian metric
$Hilb^{(k)}(\phi)$ \[
Hilb^{(k)}(\phi)(f,f):=\int_{X}|f|^{2}e^{-k\phi}d\mu_{\phi},\]
 giving a map \[
Hilb^{(k)}:\mathcal{H}_{L}\rightarrow\mathcal{H}^{(k)},\]
 There is also a natural injective map (independent of $\mu_{\phi})$
in the reverse direction, called the (scaled)\emph{ Fubini-Study map}
$FS^{(k)}:$ \[
FS^{(k)}(H):=\log(\frac{1}{N_{k}}\sum_{i=1}^{N_{k}}|f_{i}^{H}|^{2})\]
 where $f_{i}^{H}$ is any bases in $H^{0}(X,kL)$ which is orthonormal
with respect to $H.$ 

\emph{Donaldson's iteration (with respect to $\mu_{\phi})$} on the
space $\mathcal{H}^{(k)}$ is then obtained by iterating the composed
map \[
T^{(k)}:=Hilb^{(k)}\circ FS^{(k)}:\,\,\,\mathcal{H}^{(k)}\rightarrow\mathcal{H}^{(k)}\]
 and its fixed points are called \emph{balanced metrics at level $k$
(with respect to $\mu).$ }

In order to facilitate the comparison with the Kähler-Ricci flow it
will be convenient to consider the (essentially equivalent) iteration
on the space $\mathcal{H}_{L}$ obtained by iterating the map $FS^{(k)}\circ Hilb^{(k)}.$
This latter iteration will be called the \emph{Bergman iteration at
level $k$ (with respect to $\mu_{\phi})$} and we will denote the
$m$ th iterate by $\phi_{m}^{(k)}$ and call the parameter $m$ \emph{discrete
time.} Hence, the iteration immediately enters the \emph{finite dimensional}
submanifold o $FS(\mathcal{H}^{(k)})\subset\mathcal{H}_{L}$ of Bergman
metrics at level $k$ and stays there forever. By the very definition
of the Bergman iteration it may be written as the difference equation
\[
\phi_{m+1}^{(k)}-\phi_{m}^{(k)}=\frac{1}{k}\log\rho^{(k)}(\phi_{m}^{(k)}),\]
where $\rho^{(k)}(\phi)$ is the \emph{Bergman function at level $k$}
associated to $(\mu_{\phi},\phi),$ i.e. 

\[
\rho^{(k)}(\phi)=\frac{1}{N_{k}}\sum_{i=1}|f_{i}|^{2}e^{-k\phi},\]
 where $f_{i}$ is an orthonormal basis with respect to the Hermitian
metric $Hilb^{(k)}(\mu_{\mu},\phi).$ Note that the \emph{Bergman
measure} $\rho^{(k)}(\phi)\mu_{\phi}$ is a probability measure on
$X$ and independent of the choice of orthonormal bases. It plays
the role of the Monge-Ampère measure in the quantized setting.

It will also be convenient to, following Donaldson \cite{do3}, study
functionals defined directly on the space $\mathcal{H}^{(k)}.$ Fixing
the reference metric $H_{0}^{(k)}:=Hilb^{(k)}(\phi_{0})\in$$\mathcal{H}^{(k)}$
we may identify $\mathcal{H}^{(k)}$ with the space of all rank $N_{k}$
Hermitian matrices. We let \[
\mathcal{F}_{\mu}^{(k)}(H):=-\frac{1}{N_{k}k}\log\det(H)-I_{\mu}\circ FS^{(k)}(H)\]
 whose critical points in $\mathcal{H}^{(k)}$ are precisely the balanced
metrics (with respect to $\mu_{\phi})$; this is proved exactly as
in the particular cases considered in \cite{do2,bbgz}. We will also
consider the following functional on $\mathcal{H}_{L}$

\[
\mathcal{L}^{(k)}(\phi):=-\frac{1}{N_{k}k}\log\det(Hilb^{(k)}(\mu_{\phi},\phi),\]
normalized so that $\mathcal{L}^{(k)}(\phi+c)=\mathcal{L}^{(k)}(\phi)+c.$
Equivalently, we could have defined $\mathcal{L}^{(k)}$ as the anti-derivative
of the one-form on $\mathcal{H}_{L}$ defined by integration against
the Bergman measure $\rho^{(k)}(\phi)\mu_{\phi}.$

\subsubsection*{Monotonicity}

The following monotonicity properties were shown by Donaldson in the
particular setting considered in \cite{do3} (where $\mu_{\phi}$
is independent of $\phi).$ See also \cite{do2} for the setting when
$\mu(\phi)=MA(\phi)$ (compare section \ref{sub:Comparison-with-the}).
The main new observation here is that concavity of $I_{\mu}$ implies
monotonicity.
\begin{lem}
\label{lem:monotone along bergman general}Assume that $\mu_{\phi}$
is normalized. Then the following monotonicity with respect to the
discrete time $m$ holds along Bergman iteration \emph{$\phi_{m}^{(k)}$}
on $\mathcal{H}_{L}$ (defined with respect to $\mu_{\phi}):$
\begin{itemize}
\item The functional $\mathcal{L}^{(k)}$ is increasing along the Bergman
iteration and strictly\emph{ }increasing\emph{ }at\emph{ $\phi_{m}^{(k)}$
}unless\emph{ $\phi_{m}^{(k)}$ is }stationary. Equivalently, the
functional $-\log\det$ is strictly increasing along the Donaldson
iteration in $\mathcal{H}^{(k)}$ away from balanced metrics.
\item If $I_{\mu}$ is concave on the space $\mathcal{H}_{L}$ with respect
to the affine structure then it is decreasing along the iteration
and strictly\emph{ }decreasing\emph{ }at\emph{ $\phi_{m}^{(k)}$ }unless\emph{
$\phi_{m}^{(k)}$ is }stationary. Equivalently, the functional $I_{\mu}\circ FS^{(k)}$
is strictly decreasing along the Donaldson iteration in $\mathcal{H}^{(k)}$
away from balanced metrics..
\end{itemize}
\end{lem}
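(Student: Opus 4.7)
The plan is to prove both parts using the key identity that $\rho^{(k)}(\phi)\mu_\phi$ is a probability measure on $X$. Part 1 is cleanest on the quantized side $\mathcal{H}^{(k)}$, where it reduces to an AM--GM inequality; Part 2 is a direct argument on $\mathcal{H}_L$ combining the supporting-hyperplane inequality from concavity of $I_\mu$ with Jensen's inequality.

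For Part 1, I would fix $\phi := \phi_m^{(k)}$, set $H := Hilb^{(k)}(\mu_\phi,\phi)$, and choose an $H$-orthonormal basis $\{f_i\}$ of $H^0(X,kL)$, so that $H$ is represented by the identity matrix in this basis. Using the iteration identity $\phi_{m+1}^{(k)} - \phi_m^{(k)} = \frac{1}{k}\log\rho^{(k)}(\phi_m^{(k)})$ together with the definition of $\rho^{(k)}$, a direct substitution yields that the next step $H' := Hilb^{(k)}(\mu_{\phi_{m+1}^{(k)}}, \phi_{m+1}^{(k)})$, expressed in the basis $\{f_i\}$, becomes the matrix
\[
M_{ij} = N_k \int_X \frac{f_i\,\overline{f_j}}{\sum_l |f_l|^2}\,\mu_{\phi_{m+1}^{(k)}}.
\]
Taking the trace gives $\mathrm{tr}(M) = N_k\int_X \mu_{\phi_{m+1}^{(k)}} = N_k$, since $\mu_\phi$ is normalized. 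Applying AM--GM to the positive eigenvalues of $M$ (equivalently, using concavity of $\log\det$ at the identity matrix) produces $\log\det(M) \le 0$ with equality if and only if $M = \mathrm{Id}$, which is the same as $T^{(k)}(H) = H$. Since the quantity $\det(H')/\det(H)$ is basis-independent, this is exactly the desired monotonicity of $-\log\det$ along Donaldson's iteration, equivalently of $\mathcal{L}^{(k)}$ along the Bergman iteration, with strict inequality away from balanced metrics.

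For Part 2, I would combine two standard ingredients. The defining property $dI_\mu|_\phi(v) = \int_X v\,\mu_\phi$ together with the assumed concavity of $I_\mu$ on the affine space $\mathcal{H}_L$ gives the supporting-hyperplane inequality
\[
I_\mu(\phi_1) - I_\mu(\phi_0) \le \int_X (\phi_1 - \phi_0)\,\mu_{\phi_0} \qquad (\phi_0,\phi_1 \in \mathcal{H}_L).
\]
Applying this with $\phi_0 = \phi_m^{(k)}$ and $\phi_1 - \phi_0 = \frac{1}{k}\log\rho^{(k)}(\phi_m^{(k)})$ gives
\[
I_\mu(\phi_{m+1}^{(k)}) - I_\mu(\phi_m^{(k)}) \le \frac{1}{k}\int_X \log\rho^{(k)}(\phi_m^{(k)})\,\mu_{\phi_m^{(k)}}.
\]
Since $\mu_{\phi_m^{(k)}}$ is a probability measure and $\int_X \rho^{(k)}(\phi_m^{(k)})\,\mu_{\phi_m^{(k)}} = 1$, Jensen's inequality applied to the strictly concave function $\log$ bounds the right-hand side by $\frac{1}{k}\log 1 = 0$, with equality iff $\rho^{(k)}(\phi_m^{(k)})$ is $\mu$-a.e.\ constant, hence $\equiv 1$, which is the same as $\phi_{m+1}^{(k)} = \phi_m^{(k)}$.

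The only genuine obstacle is the bookkeeping of strict monotonicity: one must identify the equality case of AM--GM in Part 1 (forcing $M = \mathrm{Id}$) and of Jensen in Part 2 (forcing $\rho^{(k)} \equiv 1$), and observe that both conditions collapse to stationarity of the iteration. The translations to the equivalent formulations on $\mathcal{H}^{(k)}$ are automatic once one notes that the Bergman iteration on $\mathcal{H}_L$ and Donaldson's iteration $T^{(k)}$ on $\mathcal{H}^{(k)}$ are conjugate via $Hilb^{(k)}(\mu_{\cdot},\cdot)$ and $FS^{(k)}$.
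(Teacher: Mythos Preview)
Your proposal is correct and follows essentially the same approach as the paper: Part~1 uses the trace identity $\mathrm{tr}(M)=N_k$ together with the concavity of $\log$ (the paper phrases your AM--GM step as Jensen's inequality applied to the eigenvalues), and Part~2 is verbatim the paper's argument combining the concavity of $I_\mu$ with Jensen's inequality for $\log$ against the probability measure $\mu_{\phi_m^{(k)}}$. The identification of the equality cases with stationarity is also handled the same way.
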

\begin{proof}
The proof of the first point is essentially the same as in Donaldson's
setting in \cite{do3}, but for completeness we repeat it here. By
definition \[
\mathcal{L}^{(k)}(\phi_{m+1})-\mathcal{L}^{(k)}(\phi_{m})=-\frac{1}{N_{k}k}\log\frac{\det(Hilb^{(k)}(\phi_{m+1})}{\det(Hilb^{(k)}(\phi_{m})}\]
By the concavity of $\log$ and Jensen's inequality we hence get \[
\mathcal{L}^{(k)}(\phi_{m+1})-\mathcal{L}^{(k)}(\phi_{m})\geq-\frac{1}{k}\log\frac{1}{N_{k}}\sum_{i=1}^{N_{k}}\left\Vert f_{i}\right\Vert _{T(Hilb^{(k)}(\phi_{m})}^{2},\]
 where $f_{i}$ is an orthonormal basis with respect to the Hermitian
metric $Hilb^{(k)}(\phi_{m})$ and where by definition $T(Hilb^{(k)}(\phi_{m})=Hilb^{(k)}(FS(Hilb^{(k)}(\phi_{m})).$
Writing out the norms explicitly shows that the rhs above may be written
as \[
-\frac{1}{k}\log(\frac{1}{N_{k}}(\sum_{i=1}^{N_{k}}|f_{i}|^{2}/\sum_{i=1}|f_{i}|^{2})\mu_{(FS(Hilb^{(k)}(\phi_{m}))}=-\frac{1}{k}\log(1)=0,\]
 using that $\mu_{\phi}$ is normalized. This hence proves the first
point.

To prove the second point we use that $I_{\mu}$ is assumed concave
and that, by definition, $\mu_{\phi}=dI_{\mu}$ as a differential,
to get \[
I_{\mu}(\phi_{m+1}^{(k)})-I_{\mu}(\phi_{m}^{(k)})\leq\int(\phi_{m+1}^{(k)}-\phi_{m}^{(k)})\mu_{\phi_{m}^{(k)}}=\frac{1}{k}\int\log\rho^{(k)}(\phi_{m}^{(k)})\mu_{\phi_{m}^{(k)}}\leq\]
\[
\leq\frac{1}{k}\log\int\rho^{(k)}(\phi_{m}^{(k)})\mu_{\phi_{m}^{(k)}}=0\]
 using the definition of the iteration and Jensen's inequality in
the last step (and the fact that $\rho^{(k)}(\phi)\mu_{\phi}$ and
$\mu_{\phi}$ are both probability measures). This proves the monotonicity
of $I_{\mu}.$ The statement about strict monotonicity follow immediately
from the fact that $\log t$ is strictly concave.
\end{proof}

\subsubsection*{Properness and coercivity }

Properness and coercivity of functionals on $\mathcal{H}^{(k)}$ are
defined as in section \ref{sub:The-relative-K=0000E4hler-Ricci general},
but with the functional $J$ replaced by its quantized version on
the space $\mathcal{H}^{(k)}:$ \[
J^{(k)}(H):=-\mathcal{F}_{\mu_{0}}^{(k)}:=I_{\mu_{0}}\circ FS^{(k)}+\frac{1}{kN_{k}}\log\det(H)\]
 The content of the  following lemma is essentially contained in the
proof of Proposition 3 in \cite{do3}. We will fix a metric $H_{0}\in\mathcal{H}^{(k)}.$
For any given $H_{0}-$orthonormal base $(f_{i})$ we can then identify
an Hermitian metric $H$ with a matrix and we will denote by $H_{\lambda}$
the diagonal matrix with entries $e^{-\lambda_{i}}$ on the diagonal.
\begin{lem}
\label{lem:jk is exhaust in bergman}The following holds 
\begin{itemize}
\item For $\lambda\in\C^{N_{k}}$ let $\phi_{\lambda}=FS^{(k)}(H_{\lambda}):=\frac{1}{k}\log(\frac{1}{N_{k}}\sum_{i}e^{k\lambda_{i}}|f_{i}|^{2}).$
Then there is a constant $C$ such that \textup{\[
\max_{i}\lambda_{i}\leq I_{\mu_{0}}(\phi_{\lambda})+C.\]
}
\item The functional $J^{(k)}$ is an exhaustion function on $\mathcal{H}^{(k)}/\R^{*}$
with respect to its usual topology
\item In particular, the set of all $H\in\mathcal{H}^{(k)}$ such that \textup{\begin{equation}
-\log\det(H)\geq-C,\,\,\,\,\,\,(I_{\mu_{0}}\circ FS)(H)\leq C\label{eq:pf of bergman it two bounds}\end{equation}
is relatively compact. }
\end{itemize}
\end{lem}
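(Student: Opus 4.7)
The plan is to treat part (1) as the key estimate and to deduce parts (2) and (3) from it by spectral decomposition together with routine linear algebra.

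First, for part (1), I would discard all but one term in the logarithm defining $\phi_\lambda$: for each index $i_0$, one has the pointwise bound $\phi_\lambda \geq \lambda_{i_0} + \tfrac{1}{k}\log(|f_{i_0}|^2/N_k)$. Integrating against $\mu_0$, which is a fixed smooth probability measure, gives $I_{\mu_0}(\phi_\lambda) \geq \lambda_{i_0} - C_{i_0}$, where $C_{i_0}$ depends only on the fixed data $(f_{i_0},\phi_0,\mu_0)$; finiteness of $C_{i_0}$ uses the standard fact that $\log|f|^2$ is locally integrable against any smooth measure for a nontrivial holomorphic section $f$ of $kL$. Choosing $i_0$ to maximize $\lambda_i$ and setting $C:=\max_i C_i$ gives the stated inequality.

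For part (2), I would diagonalize an arbitrary $H\in\mathcal{H}^{(k)}$ in some $H_0$-orthonormal basis via the spectral theorem, writing $H=H_\lambda$. A direct computation yields $\tfrac{1}{kN_k}\log\det H_\lambda = -\bar\lambda$ with $\bar\lambda:=\tfrac{1}{N_k}\sum_i\lambda_i$, and then part (1) gives
\[
J^{(k)}(H_\lambda) \;\geq\; (\lambda_{\max}-\bar\lambda)-C \;\geq\; \tfrac{1}{N_k}(\lambda_{\max}-\lambda_{\min})-C.
\]
A short check shows that $J^{(k)}$ is invariant under $H\mapsto e^c H$ (the two summands change by $-c/k$ and $+c/k$ respectively), so it descends to $\mathcal{H}^{(k)}/\R^*$; since every orbit has a representative with $\bar\lambda=0$ and boundedness of such a representative is equivalent to boundedness of $\lambda_{\max}-\lambda_{\min}$, divergence in $\mathcal{H}^{(k)}/\R^*$ forces $J^{(k)}\to\infty$.

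Finally, for part (3), the two hypotheses translate, via the same diagonalization, into $\lambda_{\max}\leq 2C$ (from part (1) together with the upper bound on $I_{\mu_0}\circ FS^{(k)}$) and $\sum_i\lambda_i\geq -C/k$ (from the lower bound on $-\log\det$). These together imply $\lambda_{\min}\geq -C/k-(N_k-1)\cdot 2C$, so all $\lambda_i$ lie in a fixed compact interval and the corresponding set of matrices is relatively compact in $\mathcal{H}^{(k)}$. The only nonalgebraic input anywhere in the proof is the integrability of $\log|f_{i_0}|^2$ against the smooth reference measure $\mu_0$ in part (1); this is a standard fact from pluripotential theory, so I do not foresee any serious obstacle.
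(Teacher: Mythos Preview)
Your approach is essentially the same as the paper's, and the argument is correct in outline. However, there is one point you gloss over that the paper addresses explicitly: in part~(1) your constant $C=\max_i C_i$ depends on the particular $H_0$-orthonormal basis $(f_i)$, but in parts~(2) and~(3) you diagonalize an arbitrary $H$ in a basis that \emph{varies with $H$}. For the argument to go through you need $C$ to be uniform over all $H_0$-orthonormal bases. The paper handles this by invoking compactness of $U(N_k)$: any $H_0$-orthonormal basis is obtained from a fixed one by a unitary rotation, and the functional $f\mapsto\int_X(\tfrac{1}{k}\log|f|^2-\phi_0)\,d\mu_0$ is bounded below on the (compact) $H_0$-unit sphere of $H^0(X,kL)$. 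This lower bound is again a consequence of the local $L^1$-integrability of psh functions that you already invoke, together with the standard $L^1$-compactness of families of psh functions with bounded sup. Once this uniformity is stated, your deductions of parts~(2) and~(3) go through exactly as written and match the paper's proof.
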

\begin{proof}
For the benefit of the reader we repeat Donaldson's simple proof:
let $i_{max}$ be an index such that $\max_{i}\lambda_{i}=\lambda_{i_{max}}.$
Clearly, \begin{equation}
\max_{i}\lambda_{i}+\frac{1}{k}(\log(\frac{1}{N_{k}}|f_{i_{max}}|^{2})\leq\phi_{\lambda}\leq\max_{i}\lambda_{i}+\frac{1}{k}\log(\frac{1}{N_{k}}\sum_{i}|f_{i}|^{2}),\label{eq:pf of lemma exhaus on bergman}\end{equation}
and hence integrating over $X$ and using the first inequality above
gives \[
\max_{i}\lambda_{i}+\int_{X}(\log(|f_{i_{max}}|^{2})-\phi_{0})d\mu_{0}\leq I_{\mu_{0}}(\phi_{\lambda}),\]
 which proves the lemma since it is well-known that $I_{\mu_{0}}(\psi)>-\infty$
for any psh weight $\psi$ if $\mu_{0}$ is a smooth volume form (as
follows from the local fact that any psh function is in $L^{1})$
and in particular $-C:=I_{\mu}(\log(|f_{i_{max}}|^{2}))>-\infty.$
This proves the first point. As for the second and third one we first
note that any Hermitian metric $H$ can be represented by a diagonal
matrix (which we write in the form $H_{\lambda})$ after perhaps changing
the base ($f_{i}$) above. Moreover, by the compactness of $U(N)$
the constant $C$ in the previous point can be taken to be independent
of the base $(f_{i}))$. 

Next, it will be enough to prove the last point of the lemma (the
second point then follows since we may by scaling invariance assume
that $\det(H_{\lambda})=1).$ We may assume that $\inf_{i}\lambda_{i}=\lambda_{0}$
and since, by assumption, \[
-\log\det(H)=\sum_{i}\lambda_{i}\geq-C\]
we get \[
-\inf_{i}\lambda_{i}\leq C+\sum_{i\neq0}\lambda_{i}\leq C+(N-1)\max_{i}\lambda_{i}\]
By the assumption $(I_{\mu_{0}}\circ FS)(H)\leq C$ and the first
point of the lemma the rhs above is bounded from above and hence we
conclude that so is $-\inf_{i}\lambda_{i}.$ All in all this means
that $\max_{i}|\lambda_{i}|$ is uniformly bounded from above by a
constant, i.e. $H$ stays in a relatively compact subset of $\mathcal{H}^{(k)}.$\end{proof}
\begin{rem}
The proof of the previous lemma shows that the conclusion of the lemma
remains valid for any choice of a fixed reference weight $\phi_{0}$
and probability measure $\mu_{0}$ (which are used in the definition
of $J^{(k)})$ such that $\int_{X}\log(|f|-\phi_{0})\mu_{0}$ is finite
for any section $f\in H^{0}(X,kL).$ 
\end{rem}

\subsection*{Criteria for convergence in the large time limit}
\begin{prop}
\label{pro:conv of bergman iter at level k general}Assume that $\mu_{\phi}$
is normalized, that $I_{\mu}$ is decreasing along the Bergman iteration,
that $\mathcal{F}_{\mu}^{(k)}$ is coercive and that there is at most
one balanced metrics (modulo scaling).  Then, for any given positive
integer $k$ the following holds: In the large time limit, i.e. when
$m\rightarrow\infty$ the weights $\phi_{m}^{(k)}\rightarrow\phi_{\infty}^{(k)}$
in the $\mathcal{C}^{\infty}-$topology on $X.$ Moreover, in the
relative setting the convergence is uniform with respect to the base
parameter $s.$\end{prop}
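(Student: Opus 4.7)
The plan is to follow the schema of Proposition \ref{pro:conv of curv implies conv of weights}, with one substantial simplification: because the Bergman iterates all lie in the finite-dimensional Fubini--Study submanifold $FS^{(k)}(\mathcal{H}^{(k)}) \subset \mathcal{H}_L$, the compactness part of the argument reduces to a bound on a finite-dimensional Hermitian matrix, with no need for any elliptic regularity. I would run the whole argument on the Donaldson iterates $H_m^{(k)} := Hilb^{(k)}(\phi_{m-1}^{(k)}) \in \mathcal{H}^{(k)}$ and translate back via $FS^{(k)}$ at the very end.

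\emph{Step 1 (compactness).} Combining the monotonicity of $\mathcal{L}^{(k)}$ from Lemma \ref{lem:monotone along bergman general} with the hypothesis that $I_\mu$ is decreasing along the iteration, I get that $\mathcal{F}_\mu^{(k)}$ is monotone along the iteration. Coupling this with the coercivity hypothesis and the exhaustion property of $J^{(k)}$ from Lemma \ref{lem:jk is exhaust in bergman} places $\{H_m^{(k)}\}$ in a relatively compact subset of $\mathcal{H}^{(k)}/\R^*$. The residual scaling ambiguity of $\phi_m^{(k)}$ is controlled via the defining identity $\phi_{m+1}^{(k)} - \phi_m^{(k)} = k^{-1}\log \rho^{(k)}(\phi_m^{(k)})$: integrating against the probability measure $\mu_{\phi_m^{(k)}}$ and applying Jensen as in the proof of Lemma \ref{lem:monotone along bergman general} bounds the shift of averages, hence pins down the scaling uniformly.

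\emph{Step 2 (identification of the limit).} Any accumulation point $H_\infty^{(k)}$ of $\{H_m^{(k)}\}$ lies in an accumulation set which is forward invariant under the iteration map $T^{(k)} = Hilb^{(k)} \circ FS^{(k)}$, since $T^{(k)}$ is continuous on the finite-dimensional space $\mathcal{H}^{(k)}$. The strict monotonicity part of Lemma \ref{lem:monotone along bergman general} rules out the possibility that $T^{(k)}(H_\infty^{(k)}) \neq H_\infty^{(k)}$ modulo scaling: otherwise a further application of $T^{(k)}$ would strictly increase $\mathcal{F}_\mu^{(k)}$ beyond its supremum along the iteration, a contradiction. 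Hence every accumulation point is a balanced metric, and by the uniqueness hypothesis all accumulation points coincide modulo scaling; together with Step 1 this forces $H_m^{(k)} \to H_\infty^{(k)}$ in $\mathcal{H}^{(k)}$.

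\emph{Step 3 (smoothness and uniformity in $s$).} Since $FS^{(k)}(H) = k^{-1}\log\bigl(\tfrac{1}{N_k}\sum |f_i^H|^2\bigr)$ is an explicit smooth function of the matrix entries of $H$ and the fixed basis sections $f_i$, convergence of $H_m^{(k)}$ in $\mathcal{H}^{(k)}$ upgrades immediately to $\mathcal{C}^\infty(X)$-convergence of $\phi_m^{(k)} = FS^{(k)}(H_m^{(k)})$. In the relative setting, a holomorphic frame for $\pi_*(k\mathcal{L})$ exists locally on $S$ and depends smoothly on $s$, and the entire construction $\mu_\phi, Hilb^{(k)}, FS^{(k)}, T^{(k)}$ then varies smoothly with $s$; running Steps 1--2 fiberwise with estimates locally uniform in $s$ yields convergence uniform in $s$. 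I expect the main technical obstacle to be verifying that the compactness bounds from Step 1 are genuinely locally uniform in $s$ and that the fiberwise balanced metric depends continuously on $s$; both should follow from the continuity built into the family $\mu_\phi$ together with an implicit function argument at the (nondegenerate) balanced metric.
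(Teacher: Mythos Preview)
Your overall strategy matches the paper's: establish compactness of the Donaldson iterates in $\mathcal{H}^{(k)}$, use strict monotonicity to show that every accumulation point is fixed, and invoke uniqueness. The paper also works directly on $\mathcal{H}^{(k)}$ in its part (a).

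There is one soft spot in your scaling control. In Step~1 the Jensen argument you sketch only gives a one-sided bound on the increment $\int(\phi_{m+1}-\phi_m)\mu_{\phi_m}$, so it does not by itself prevent drift to $-\infty$. The paper handles this more directly: from the two separate monotonicities you already have ($-\log\det H_m$ bounded below, $I_\mu\circ FS^{(k)}(H_m)$ bounded above), the coercivity inequality with $\delta<1$ yields a bound on $I_{\mu_0}\circ FS^{(k)}$ exactly as in the proof of Lemma~\ref{lem:compactness of kr flow}, and then the third point of Lemma~\ref{lem:jk is exhaust in bergman} gives compactness in the \emph{full} $\mathcal{H}^{(k)}$, scaling included. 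Likewise in Step~2, to conclude that two accumulation points in the same scaling class actually coincide you should invoke a scaling-\emph{equivariant} monotone quantity such as $-\log\det$ (which converges along the iteration and hence takes the same value on all accumulation points), not the scaling-invariant $\mathcal{F}_\mu^{(k)}$; ``together with Step~1'' does not supply this. These are easily fixed but should be made explicit.

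Your Step~3 is actually more direct than what the paper does. The paper upgrades $C^0$- to $\mathcal{C}^\infty$-convergence via an explicit integral formula for the Bergman function coming from the determinantal-point-process structure, proving a smoothing estimate of the form $\|\partial_X^\alpha(h_0/h_{m+1})\|_{L^\infty}\le C_\alpha\|h_m/h_0\|_{L^\infty}$. Your observation that $FS^{(k)}$ is a smooth map from the finite-dimensional matrix space into $\mathcal{C}^\infty(X)$ (the sum $\sum_i|f_i^H|^2$ being a rational function of the entries of $H$ with values bounded away from zero since $kL$ is very ample) gives the $\mathcal{C}^\infty$-convergence immediately once $H_m^{(k)}\to H_\infty^{(k)}$, and transports the uniformity in $s$ as well. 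This simpler route buys clarity; the paper's route has the incidental benefit of exhibiting the smoothing effect of a single iteration step.
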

\begin{proof}
\emph{a) uniform convergence: }

We equip $FS(\mathcal{H}^{(k)}),$ i.e. the space of all Bergman weights
at level $k,$ with the topology induced by the sup-norm. It is not
hard to see that this is the same topology as the one induced from
the finite dimensional symmetric space $\mathcal{H}^{(k)}=GL(N_{k},\C)/U(N_{k})$
with its usual Riemannian metric, or with respect to the operator
norm on $GL(N_{k},\C).$ Hence, it will be enough to prove the convergence
of Donaldson's iteration in $\mathcal{H}^{(k)}.$ 

Since $\mu_{\phi}$ is assumed normalized Lemma \ref{lem:monotone along bergman general}
shows that $-\log\det H$ is uniformly bounded from below along the
Donaldson iteration in $\mathcal{H}^{(k)}$. Moreover, by assumption
$I_{\mu_{\phi}}\circ FS^{(k)}$ is uniformly bounded from above along
the Donaldson iteration. Hence, just as in the proof of Lemma \ref{lem:compactness of kr flow}
it follows from the coercivity assumption that $I_{\mu_{0}}\circ FS^{(k)}$
is also uniformly bounded from above along the Donaldson iteration.
But then it follows from Lemma \ref{lem:jk is exhaust in bergman}
that the iteration $H_{m}^{(k)}$ stays in a compact subset of $\mathcal{H}^{(k)}.$

Let now $\mathcal{K}:=\overline{\{H_{m}^{(k)}\}}$ be the closure
of the orbit of $T^{(k)}$ which is relatively compact in $\mathcal{H}^{(k)}$
by the previous argument. Denote by $G$ an accumulation point \[
\lim_{j}H_{m_{j}}^{(k)}=G\]
 in $\mathcal{H}^{(k)}.$ By the continuity of $H\mapsto T^{(k)}(H)$
on $\mathcal{H}^{(k)}$ we deduce that \[
\lim_{j}T^{(k)}(H_{m_{j}}^{(k)})=T^{(k)}(G).\]
 In other words, $\mathcal{K}$ is in fact\emph{ compact} and\emph{
invariant under $T^{(k)}.$} Note also that by monotonicity \[
\lim_{j}-\log\det(H_{m_{j}}^{(k)})=-\log\det(G)=\sup_{\mathcal{K}}(-\log\det)\]
Assume now to get a contradiction that $T^{(k)}(G)\neq G.$ By the
strict monotonicity in lemma \ref{lem:monotone along bergman cy}
we have that $\log\det(T^{(k)}G))>\log\det(G)$ contradicting \ref{eq:pf of unif conv of berg}
(since $T^{(k)}(G)\in\mathcal{K}).$ All in all this means that we
have shown that the subsequence $(H_{m_{j}}^{(k)})$of Donaldson iteration
converges to a fixed point, i.e. a balanced metric. By the assumption
on uniqueness up to scaling it follows, again using monotonicity (just
like in the proof of Proposition \ref{pro:conv of curv implies conv of weights})
that all accumulation points coincide, i.e. the iteration converges.

\emph{b) Higher order convergence: }

To simplify the notation we set $k=1$ and write $\phi_{m}^{(k)}=\phi_{m}.$
First note that the $L^{\infty}-$estimate above is uniform over $S,$
as follows by combining the monotonicity of the functionals with the
uniform boundedness of the initial weight $\phi_{0}$. By the uniform
convergence of $\phi_{m}$ it will hence be enough to prove that 

\begin{equation}
\left\Vert \partial_{X}^{\alpha}(h_{0}/h_{m+1})\right\Vert _{L^{\infty}(X)}\leq C_{\alpha}\left\Vert (h_{m}/h_{0})\right\Vert _{L^{\infty}(X)}\label{eq:smoothing prop of bergman it}\end{equation}
where $h_{m}=e^{-\phi_{m}}$ and $\partial_{X}^{\alpha}$ denotes
a real linear differential operator on $X$ of order $\alpha$ (note
that while $h_{m}$ globally corresponds to a metric on $L$ the quotient
$h_{0}/h_{m+1}$ defines a global\emph{ function} on $X).$ Accepting
this estimate for the moment the uniform convergence of $(h_{m})$
hence gives that $\left\Vert \partial_{X}^{\alpha}(h_{0}/h_{m}\right\Vert _{L^{\infty}(X)}$
is uniformly bounded in $m$ and since $h_{m}/h_{0}\rightarrow h_{\infty}/h_{0}$
it then follows that $\left\Vert \partial_{X}^{\alpha}(\phi_{m}-\phi_{0})\right\Vert _{L^{\infty}(X)}$
is also uniformly bounded in $m.$ Hence, standard compactness arguments
show the $\mathcal{C}^{\infty}-$convergence of $(\phi_{m}).$ 

Finally, the estimate \ref{eq:smoothing prop of bergman it} is a
consequence of the following quasi-explicit integral formula for the
Bergman function familiar from the theory of determinantal random
point processes (see \cite{be1} and references therein): $\rho(\phi)(x)=$
\[
=\int_{y\in X^{N-1}}f(x,y)e^{-(\phi-\phi_{0})(x)}e^{-(\phi-\phi_{0})(y)}d\mu_{\phi}(y)^{\otimes N-1}/Z_{\phi},\,\, Z_{\phi}:=\int_{X^{N}}f_{0}e^{-(\phi-\phi_{0})}d\mu{}_{\phi}^{\otimes N}\]
 where $f(x_{1},x_{2},...,x_{N})=|\det_{1\leq i,j\leq N}(f_{i}(x_{i}))_{i,j}|^{2}e^{-\phi_{0}(x_{1})}...e^{-\phi_{0}(x_{N})}$
and $(f_{i})$ is any given orthonormal base with respect to the Hermitian
metric $Hilb^{(1)}(\phi_{0})$ on $H^{0}(X,L)$ (note that $Z_{\phi}$
appears as the normalizing constant). We have used the notation $\phi(x,...x_{m})=\phi(x_{1})+...\phi(x_{m}).$
In particular, \[
(h_{0}/h_{m+1})(x)=\int_{y\in X^{N-1}}f(x,y)e^{-(\phi_{m}-\phi_{0})(y)}d\mu_{\phi}(y)^{\otimes N-1}/Z_{\phi_{m}}\]
and hence differentiating wrt $x$ by applying $\partial_{X}^{\alpha}$
gives \[
|\partial_{X}^{\alpha}(h_{0}/h_{m+1})(x)|=\left|\int(\partial_{X}^{\alpha}f(x,y))e^{-(\phi-\phi_{0})(y)}d\mu_{\phi}(y)^{\otimes N-1}/Z_{\phi}\right|\leq\frac{A_{\alpha}}{Z_{\phi_{m}}}\left\Vert e^{-(\phi_{m}-\phi_{0})}\right\Vert _{L^{\infty}(X)},\]
where $A_{\alpha}$ is a constant independent of $m.$ Since, by the
uniform convergence of $\phi_{m},$ we have that $Z_{\phi_{m,}}>C>0$
for some positive constant $C$ this concludes the proof of the estimate
\ref{eq:smoothing prop of bergman it}.
\end{proof}
The following basic lemma gives a natural criterion for the assumptions
(a part from the monotonicity of $I_{\mu})$ in the previous theorem
to be satisfied. 
\begin{lem}
\label{lem:geod convex}Suppose that $\mathcal{G}$ is a functional
on $\mathcal{H}^{(k)}$ which is geodesically strictly convex with
respect to the symmetric Riemann structure and strictly convex modulo
scaling. Then $\mathcal{G}$ has at most one critical point (modulo
scaling). Moreover, if it has some critical point then $\mathcal{G}$
is coercive.\end{lem}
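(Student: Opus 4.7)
The plan is to handle uniqueness and coercivity separately, exploiting the finite-dimensional Hadamard-type structure of $\mathcal{H}^{(k)}\cong GL(N_{k},\C)/U(N_{k})$. I interpret ``strictly convex modulo scaling'' as meaning that $\mathcal{G}$ is convex along all symmetric-space geodesics and strictly convex along those whose image in $\mathcal{H}^{(k)}/\R$ is non-constant (which is automatic for $\mathcal{F}^{(k)}_\mu$ when $\mu_{\phi}$ is normalized).

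For uniqueness modulo scaling, suppose $H_{1},H_{2}\in\mathcal{H}^{(k)}$ are two critical points with $[H_{1}]\neq[H_{2}]$ in $\mathcal{H}^{(k)}/\R$. Let $\gamma:[0,1]\rightarrow\mathcal{H}^{(k)}$ be the (unique) symmetric-space geodesic joining them; by assumption this geodesic is not a scaling orbit, so by hypothesis $f(t):=\mathcal{G}(\gamma(t))$ is strictly convex in $t$. But criticality at the endpoints forces $f'(0)=f'(1)=0$, contradicting the fact that strict convexity makes $f'$ strictly increasing. Hence critical points are unique modulo scaling.

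For coercivity, let $H_{*}$ be a critical point and $\bar{H}_{*}$ its class in the quotient. For each unit tangent vector $v$ at $\bar{H}_{*}$, let $\gamma_{v}$ be the unit-speed geodesic from $\bar{H}_{*}$ in direction $v$, and set $f_{v}(t):=\mathcal{G}(\gamma_{v}(t))$. Criticality gives $f_{v}'(0)=0$, and strict convexity modulo scaling forces $f_{v}$ to be strictly convex, so $f_{v}'(1)>0$. Since the unit sphere at $\bar{H}_{*}$ is compact (finite dimensional!) and $v\mapsto f_{v}'(1)$ is continuous, we obtain a uniform lower bound $\delta_{0}:=\min_{v}f_{v}'(1)>0$. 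Convexity then delivers, for all $H\in\mathcal{H}^{(k)}$,
\[
\mathcal{G}(H)-\mathcal{G}(H_{*})\geq\delta_{0}\,d(\bar{H},\bar{H}_{*})-\delta_{0},
\]
where $d$ denotes the symmetric-space distance on $\mathcal{H}^{(k)}/\R$.

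The final step is to compare $d$ with $J^{(k)}$. Writing $H=H_{*}\exp(A)$ in an $H_{*}$-orthonormal basis with $A$ Hermitian of eigenvalues $\lambda_{1},\dots,\lambda_{N_{k}}$, normalized by scaling so that $\sum\lambda_{i}=0$, one has $d(\bar{H},\bar{H}_{*})=(\sum\lambda_{i}^{2})^{1/2}\geq\max_{i}|\lambda_{i}|$. On the other hand, the pointwise upper bound $FS^{(k)}(H)\leq\max_{i}\lambda_{i}/k+C$ used already in the proof of Lemma \ref{lem:jk is exhaust in bergman}, together with $\log\det H=\log\det H_{*}+\sum\lambda_{i}$, immediately gives $J^{(k)}(H)\leq C_{1}\max_{i}|\lambda_{i}|+C_{2}\leq C_{1}\,d(\bar{H},\bar{H}_{*})+C_{2}$. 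Combining with the previous estimate yields $\mathcal{G}\geq(\delta_{0}/C_{1})J^{(k)}-C$, which is the desired coercivity. The main obstacle is the compactness step in the third paragraph: one needs enough regularity of $\mathcal{G}$ to make $v\mapsto f_{v}'(1)$ continuous on the unit sphere; for all applications in the paper $\mathcal{G}$ is smooth, but in full generality one can argue equivalently via one-sided derivatives and subdifferentials of convex functions on a finite-dimensional space.
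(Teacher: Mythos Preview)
Your argument is correct and follows essentially the same route as the paper's proof: strict convexity along geodesics gives uniqueness, and for coercivity you use a critical point as base point, compactness of the unit sphere in the (finite-dimensional) quotient to get a uniform lower bound on the directional derivative, and the linear upper bound $J^{(k)}\leq C\cdot t + D$ along geodesics coming from Lemma~\ref{lem:jk is exhaust in bergman}. The paper leaves the compactness step implicit (``by a simple compactness argument\dots it is enough to prove coercivity along any fixed geodesic''), whereas you spell it out via $\delta_{0}=\min_{v}f_{v}'(1)>0$; this is a welcome clarification rather than a different method. One small quibble: the pointwise upper bound on $FS^{(k)}(H)$ from \eqref{eq:pf of lemma exhaus on bergman} gives $\phi_{\lambda}\leq \max_{i}\lambda_{i}+C$, not $\max_{i}\lambda_{i}/k+C$, but this does not affect the conclusion since only linear growth in $\max_{i}|\lambda_{i}|$ is needed.
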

\begin{proof}
Uniqueness follows immediately from strict convexity and hence we
turn to the proof of coercivity. By a simple compactness argument
it will be clear that, after fixing a reference metric $H_{0}\in\mathcal{H}^{(k)},$
which we take to be a critical point of $\mathcal{G}$, it is enough
to prove coercivity along any fixed geodesic passing through $H_{0}.$
To this end let $H_{t}$ be a geodesic in $\mathcal{H}^{(k)}$ starting
at $H_{0},$ i.e. the orbit of the action of a one-parameter subgroup
of $GL(N_{k}).$ In the notation of Lemma \ref{lem:jk is exhaust in bergman}
this means that $H_{t}=H_{t\lambda}$ for $\lambda\in\C^{N}$ fixed.
By scaling invariance we may assume that the determinant of $H_{t}$
vanishes along the geodesic. Integrating the upper bound in \ref{eq:pf of lemma exhaus on bergman}
over $X$ gives \[
J(H_{t})=0+(I_{\mu_{0}}\circ FS)(H_{t})\leq Ct+D\]
Now, let $f(t)=\mathcal{G}(H_{t}).$ Since by assumption $f$ is convex
and $0$ is a critical point have that $df/dt\geq0$ for all $t.$
Hence, if we fix some number $\epsilon>0,$ then \[
f(t)\geq f(0)+\int_{\epsilon}^{t}(df/ds)ds.\]
But by the assumption on strict convexity the latter integrand is
bounded from below by some $\delta>0.$ All in all this shows that
\[
\mathcal{G}(H_{t})\geq\delta t-A\geq\frac{\delta}{C}J(H_{t})-A'\]
 which finishes the proof. 
\end{proof}

\subsection*{Large $k$ asymptotics}

Next, we will recall the following proposition which is the link between
the Bergman iteration and the Kähler-Ricci flow. It is essentially
due to Bouche and Tian, apart from the uniformity with respect to
$\phi.$ In fact, a complete asymptotic expansion in powers of $k$
holds as was proved by Catlin and Zelditch and the uniformity can
be obtained by tracing through the same arguments (as remarked in
connection to Proposition 6 in \cite{do1}). For references see the
recent survey \cite{z}.
\begin{prop}
\label{pro:bouche tian-1}Assume that the volume form $\mu_{\phi}$
depends smoothly on $\phi.$ Then the following uniform convergence
for the corresponding Bergman function $\rho_{(k)}(\phi)$ holds:
there is an integer $l$ such that \[
\sup_{X}|\rho_{(k)}(\phi)-\frac{(dd^{c}\phi)^{n}/n!}{\mu_{\phi}}|\leq C/k\]
for all weights $\phi$ such that $dd^{c}\phi$ is uniformly bounded
from above in $\mathcal{C}^{l}-$norm with $dd^{c}\phi$ uniformly
bounded from below by some fixed Kähler form. 
\end{prop}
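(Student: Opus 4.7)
The plan is to reduce to the classical Tian--Catlin--Zelditch asymptotic expansion of the on-diagonal Bergman kernel and to argue that its standard proofs give constants depending only on finitely many derivatives of the data. First I would unpack the definitions: if $(f_i)$ is an $Hilb^{(k)}(\phi)$-orthonormal basis, then
\[
N_k\,\rho^{(k)}(\phi)(x)\;=\;\sum_{i=1}^{N_k}|f_i(x)|^2 e^{-k\phi(x)}\;=:\;B_k(x,x)
\]
is precisely the on-diagonal Bergman kernel of $H^0(X,kL)$ equipped with the $L^2$-pairing determined by $(k\phi,\mu_\phi)$. Using $N_k=k^n+O(k^{n-1})$ (Riemann--Roch, with $V=1$), the claimed estimate becomes
\[
\sup_X\Bigl|\,B_k(x,x)-k^n\,\frac{(dd^c\phi)^n/n!}{\mu_\phi}(x)\,\Bigr|\;=\;O(k^{n-1}),
\]
with the implicit constant controlled by a $\mathcal{C}^l$-bound on $\phi$ and a uniform strict positive lower bound for $dd^c\phi$.

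Next I would invoke the Tian--Bouche--Catlin--Zelditch theorem, which for each fixed smooth strictly positively curved weight $\phi$ and smooth positive measure $\mu_\phi$ yields a complete asymptotic expansion
\[
B_k(x,x)\;=\;k^n\,\frac{(dd^c\phi)^n/n!}{\mu_\phi}(x)\Bigl(1+\tfrac{a_1(x)}{k}+\tfrac{a_2(x)}{k^2}+\cdots\Bigr),
\]
where the coefficients $a_j$ are universal polynomials in the curvature of $\omega_\phi$ and the derivatives of $\log\mu_\phi$ and $\log\omega_\phi^n$. Our statement is the truncation of this expansion at order $1/k$. I would realize this via the Hörmander/Tian peak-section approach: at each point $x_0\in X$ one uses Kodaira-type coordinates adapted to $\phi$ in which $k\phi$ approximates the flat Bargmann--Fock weight on $T_{x_0}X$; Hörmander's $L^2$-estimate for $\bar\partial$ (whose constant depends only on a lower bound for $dd^c\phi$) allows one to correct model peak sections into genuine holomorphic sections with controlled error, producing a reproducing kernel whose on-diagonal value is asymptotic to the Bargmann--Fock one, namely $k^n(dd^c\phi)^n/(n!\mu_\phi)(x_0)$.

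The point that requires the most care, and the one the uniformity of the statement hinges upon, is tracing through this construction to verify that the $O(k^{n-1})$ remainder is bounded by a constant depending only on a finite $\mathcal{C}^l$-norm of $\phi$ (together with the fixed lower bound on $dd^c\phi$ and the smooth dependence $\phi\mapsto\mu_\phi$). Concretely, the main sources of error are: the Taylor remainder in replacing $\phi$ by its quadratic Bargmann--Fock model on a ball of radius $\sim k^{-1/2}$, the off-diagonal Gaussian decay of the global Bergman kernel (Agmon-type estimates which again rely only on a lower bound of $dd^c\phi$), and the Hörmander correction step. Each such error is manifestly polynomial in finitely many derivatives of $\phi$ and $\log\mu_\phi$, so there is an integer $l$ (for the leading $1/k$ estimate, $l=4$ suffices) for which the constant in the statement can be taken uniform over the set of $\phi$ satisfying the hypotheses.

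The hard part is precisely this book-keeping: the classical presentations (e.g.\ Catlin, Zelditch) state the expansion for a single fixed metric, but, as noted after Proposition~6 of \cite{do1} and in the survey \cite{z}, every estimate in those proofs is in fact quantitative in the $\mathcal{C}^l$-norm of the Kähler weight and in the positivity constant of $dd^c\phi$. I would therefore conclude by citing these references and invoking a standard compactness argument: the set of weights satisfying the stated hypotheses is, after normalization, precompact in $\mathcal{C}^{l-2}$, so that the uniform bound $C/k$ follows from the continuous dependence of all quantities entering the peak-section construction.
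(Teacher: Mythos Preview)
Your proposal is correct and matches the paper's own treatment: the paper does not give a proof of this proposition but simply attributes it to Bouche--Tian and the Catlin--Zelditch expansion, noting (as you do) that the uniformity in $\phi$ is obtained by tracing through the constants in those arguments, with a reference to the remark after Proposition~6 in \cite{do1} and the survey \cite{z}. If anything, your outline of the peak-section/H\"ormander mechanism and the sources of the $O(k^{n-1})$ error is more detailed than what the paper provides.
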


\section{\label{sec:The-Calabi-Yau-setting}The Calabi-Yau setting}

First consider the absolute case where we assume given an ample line
bundle $L\rightarrow X.$ In this section we will the apply the general
setting introduced in the previous setting to the case when the measure
$\mu$ is independent of $\phi.$ We will assume that it is normalized,
i.e. a probability measure. We will mainly be interested in the case
when $X$ is a Calabi-Yau manifold, which induces a canonical probability
measure $\mu$ on $X$ defined by \[
\mu=c_{n}\Omega\wedge\bar{\Omega}\]
 where $\Omega$ is any given holomorphic $n-$form trivializing the
canonical line bundle $K_{X}$ and $c_{n}$ is a normalizing constant.
In the relative Calabi-Yau setting, where each fiber is assumed to
be a Calabi-Yau manifold, this hence yields a canonical smooth family
of measures on the fibers. 

For a fixed reference element $\phi_{0}\in\mathcal{H}_{L}$ we set
\[
I_{\mu}(\phi):=\int_{X}(\phi-\phi_{0})\mu,\]
which is \emph{equivariant} under the usual actions of the additive
group $\R:$ $I_{\mu}(\phi+c)=I_{\mu}(\phi)+c.$ Moreover, be definition
the associated functional $-\mathcal{F}_{\mu}$ is coercive.

\subsection{The relative Kähler-Ricci flow}

The convergence on the level of Kähler forms in the following theorem
is due to Cao (a part from the uniqueness which was first shown by
Calabi). We just observe that, since $\mu$ is normalized, the convergence
of the flow also holds on the level of weights. 
\begin{thm}
\emph{\label{thm:cao}The} \emph{Kähler-Ricci flow on $\mathcal{H}_{L}$
with respect to $\mu$ exists for all times $t\in[0,\infty[$ and
the solution $\phi_{t}$ is smooth on $X\times[0,\infty[.$ Moreover,
$\phi_{t}\rightarrow\phi_{\infty}$ uniformly in the $\mathcal{C}^{\infty}$-
topology on $X$ when} \emph{$t\rightarrow\infty,$ where $\phi_{\infty}$
is the unique (modulo scaling) solution to the inhomogeneous Monge-Ampère
equation }\ref{eq:inhome ms intr}\emph{. More precisely, all the
analytical assumptions in section \ref{sub:The-relative-K=0000E4hler-Ricci general}
are satisfied. In the Calabi-Yau case $\omega_{\infty}$ is Ricci
flat. }\end{thm}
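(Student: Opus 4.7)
The plan is as follows. The bulk of the theorem, namely long-time existence together with $\mathcal{C}^\infty$-convergence of the evolving K\"ahler forms $\omega_t := dd^c\phi_t$, is precisely Cao's classical result, which we shall invoke. What must be added to get the statement as formulated is threefold: (i) to upgrade convergence from the level of K\"ahler forms to the level of weights $\phi_t$; (ii) to identify the limit with the unique (mod $\R$) solution of the Monge-Amp\`ere equation \ref{eq:inhome ms intr}; and (iii) to verify that the three analytic assumptions -- existence, uniqueness mod $\R$, and the stability estimate \ref{eq:stab estim} -- listed in Section \ref{sub:The-relative-K=0000E4hler-Ricci general} all hold here.

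For existence and regularity, one writes $\phi_t = \phi_0 + u_t$ with $u_0 = 0$, which converts the flow into a strictly parabolic complex Monge-Amp\`ere equation for $u_t$. Short-time existence in any $\mathcal{C}^{k,\alpha}$-class follows from standard parabolic theory. Long-time existence is then the classical chain of a priori estimates due to Yau, adapted to the parabolic setting by Cao: a uniform $C^0$-bound for $u_t$ via the maximum principle applied to the equations satisfied by $u_t$ and $\partial_t u_t$; an Aubin-Yau $C^2$-bound for $\omega_t$ depending only on this $C^0$-bound; Calabi or Evans-Krylov estimates for the third-order derivatives; and finally Schauder bootstrapping for all higher derivatives. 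All constants depend only on an upper bound of $\|\phi_0\|_{\mathcal{C}^l}$ and a lower bound on $dd^c\phi_0$, so the stability estimate \ref{eq:stab estim} is immediate on each finite interval $[0,M]$. Uniqueness of fixed points modulo $\R$ is Calabi's classical injectivity of the Monge-Amp\`ere operator on the space of $\omega_0$-psh functions (equivalently, strict concavity of $\mathcal{E}$ along any line segment in $\mathcal{H}_L$ modulo scaling).

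For the convergence at the level of weights, the key step is to invoke Proposition \ref{pro:conv of curv implies conv of weights}. Its hypotheses are satisfied: $\mu$ is a probability measure and is therefore normalized in the sense of Section \ref{sub:The-measure-mu and functionals}, while $-\mathcal{F}_\mu$ is coercive -- this is essentially the classical Aubin-Yau functional inequality for a fixed smooth volume form, and in the exact form required by the proposition is supplied by the pluripotential compactness results of \cite{bbgz}. Since Cao's theorem already provides $\mathcal{C}^\infty$-relative compactness of the family $\omega_t$, the proposition yields $\phi_t \to \phi_\infty \in \mathcal{H}_L$ in the $\mathcal{C}^\infty$-topology, and $\phi_\infty$ is a fixed point of the flow, hence a solution of $(dd^c\phi_\infty)^n/n! = \mu$. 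In the Calabi-Yau setting, taking $\mu = c_n\Omega\wedge\bar\Omega$ for a local trivialization of $K_X$ makes $\log(\omega_\infty^n/n!)$ locally pluriharmonic, so $\mathrm{Ric}\,\omega_\infty = 0$.

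In the relative setting all the preceding estimates are locally uniform in $s \in S$, since the reference data, the family of measures $\mu_s$ and the initial weight vary smoothly in $s$ and the fibers are diffeomorphic; fiberwise Calabi uniqueness and smooth dependence on parameters in the limit elliptic Monge-Amp\`ere equation then yield a smooth limit $\phi_\infty$ on $\mathcal{X}$. The main technical obstacle is the verification of coercivity of $-\mathcal{F}_\mu$ against the $J$-functional needed to apply Proposition \ref{pro:conv of curv implies conv of weights}; this is where pluripotential input from \cite{bbgz} is essential, as it is what prevents the orbit $\{\phi_t\}$ from drifting to infinity in $\mathcal{H}_L$ without any ad hoc normalization.
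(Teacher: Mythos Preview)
Your approach is essentially the same as the paper's: invoke Cao's $\mathcal{C}^\infty$-convergence of the forms $\omega_t$, then apply Proposition \ref{pro:conv of curv implies conv of weights} to upgrade to convergence of the weights $\phi_t$, and note that the analytical assumptions of Section \ref{sub:The-relative-K=0000E4hler-Ricci general} follow from Cao's a priori estimates and Calabi uniqueness. Your added detail on how the stability estimate \ref{eq:stab estim} comes out of the standard parabolic Monge--Amp\`ere chain is a welcome elaboration the paper omits.

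One correction, however: you overstate the difficulty of the coercivity of $-\mathcal{F}_\mu$, calling it ``the main technical obstacle'' and attributing it to pluripotential results from \cite{bbgz}. In the present setting $\mu$ is a \emph{fixed} probability measure independent of $\phi$, so $I_\mu(\phi)=\int_X(\phi-\phi_0)\mu$ is linear and $-\mathcal{F}_\mu=-\mathcal{E}+I_\mu$ is, up to the harmless choice of reference volume form, exactly the functional $J$ itself; the paper accordingly records that $-\mathcal{F}_\mu$ is coercive ``by definition'' just before stating the theorem. The input from \cite{bbgz} enters not in the coercivity hypothesis but inside the proof of Proposition \ref{pro:conv of curv implies conv of weights} (via Lemma \ref{lem:compactness of kr flow}), where $L^1$-compactness of sublevel sets of $J$ is used. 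So the coercivity check here is trivial, not an obstacle.
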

\begin{proof}
As shown by Cao \cite{ca} $\omega_{t}\rightarrow\omega_{\infty}$
in the $\mathcal{C}^{\infty}-$topology. But then it follows from
Proposition \ref{pro:conv of curv implies conv of weights} that \emph{,
$\phi_{t}\rightarrow\phi_{\infty}$ uniformly in the $\mathcal{C}^{\infty}$-
topology on $X.$} The smoothness in the relative case was not stated
explicitly in \cite{ca} but follows from basic maximum principle
arguments.
\end{proof}

\subsubsection{Preliminaries: Kodaira-Spencer classes and Weil-Petersson  geometry }

In this section we will assume that the base $S$ is one-dimensional
and embedded as a domain in $\C.$ Recall that the infinitesimal deformation
of the complex structures on the smooth manifold $\mathcal{X}_{s}$
as $s$ varies is captured by the the Kodaira-Spencer class $\rho(\frac{\partial}{\partial s})\in H^{1,0}(T^{1,0}\mathcal{X}_{s})$
\cite{v}. When the fibers are Calabi-Yau manifolds the {}``size''
of the deformation is measured by the (generalized) Weil-Petersson
form \cite{fs} $\omega_{WP}$ on the base $S.$ It was extensively
studied by and Tian \cite{ti0} and Todorov \cite{to} when the base
$S$ is a moduli space of Calabi-Yau manifolds and $\mathcal{X}$
is the corresponding Kuranishi family. The form $\omega_{WP}$ is
defined by \begin{equation}
\omega_{WP}(\frac{\partial}{\partial s},\frac{\partial}{\partial s}):=\left\Vert A_{CY}\right\Vert _{\omega_{CY}}^{2},\label{eq:def of w-p}\end{equation}
 where $A_{CY}$ denotes the unique representative in the Kodaira-Spencer
class $\rho(\frac{\partial}{\partial s})\in H^{1,0}(T^{1,0}\mathcal{X}_{s})$
which is harmonic with respect to a given Ricci flat metric $\omega_{CY}$
on $\mathcal{X}_{s}$ and the $L^{2}-$norm is computed with respect
to this latter metric. Moreover, as shown in \cite{to} the following
formula holds \begin{equation}
\left\Vert A_{CY}\right\Vert _{\omega_{CY}}^{2}=\frac{\partial^{2}\psi_{\Omega}}{\partial s\partial\bar{s}},\,\,\,\,\psi_{\Omega}(s):=\log i^{n^{2}}\int_{\mathcal{X}_{s}}\Omega\wedge\bar{\Omega},\label{eq:wp as curv}\end{equation}
 where $\Omega$ now denotes any given, nowhere vanishing, global
holomorphic $n-$form on $p^{-1}(U)$ and where $U$ denotes some
neighborhood of a fixed point $s$ in $S.$ More generally, for an
arbitrary smooth base $S$ the $(1,1)-$form $\omega_{WP}$ on $S$
may be defined as the curvature of the line bundle $\mbox{\ensuremath{\pi}}_{*}(K_{\chi/S})$
on $S.$ It is in the latter form that $\omega_{WP}$ will appear
in the proof of Theorem \ref{thm:heat eq for c in c-y} below. In
fact, the formula \ref{eq:def of w-p} may then be deduced from Theorem
\ref{thm:heat eq for c in c-y} (see remark \ref{rem:todorovs formel}).

Next we will explain how, for a fixed base parameter $s,$ a weight
$\phi$ on the line bundle $\mathcal{L}\rightarrow\mathcal{X}\rightarrow S$
induces the following two objects:
\begin{itemize}
\item a $(0,1)-$form $A_{\phi}$ with values in $T^{1,0}\mathcal{X}_{s}$
representing the Kodaira-Spencer class $\rho(\frac{\partial}{\partial s})$
in $H^{1,0}(T^{1,0}\mathcal{X}_{s}).$ 
\item A function $c(\phi)$ on $\mathcal{X}$ measuring the positivity (or
lack of positivity) of $dd^{c}\phi$ on $\mathcal{X}$ in terms of
the positivity of the restrictions of $dd^{c}\phi$ to the fibers
$\mathcal{X}_{s}.$
\end{itemize}
In fact $A_{\phi}$ will only depend on the family, parametrized by
$s,$ of two-forms $\omega_{s}$ obtained as the\emph{ restrictions}
of the curvature form $\omega_{\phi}$ on $\mathcal{X}$ to all fibers
$\mathcal{X}_{s},$ while $c(\phi)$ will depend on the whole form
$\omega_{\phi}.$

\subsubsection*{Trivial fibrations}

Assume that $\pi:\,\mathcal{X}\rightarrow S$ is a holomorphically
trivial fibration, so that $\mathcal{X}$ is embedded in $\C\times X$
and that $\mathcal{L}=\pi^{*}L$ where $L\rightarrow X$ is an ample
line bundle. Given a smooth family of weights $\phi(s,\cdot)$ on
$L\rightarrow X$ with strictly positive curvature form $\omega_{\phi}^{X}:=d_{X}d_{X}^{c}\phi$
(for $s$ fixed) one obtains a smooth vector field $V_{\phi}$ of
type $(1,0)$ as the {}``complex gradient'' of $\partial_{s}\phi:$
\begin{equation}
\delta_{V_{\phi}}\omega_{\phi(s,\cdot)}^{X}=\mbox{\ensuremath{\overline{\partial}}\ensuremath{\ensuremath{_{X}}(\ensuremath{\partial_{s}\phi})}},\label{eq:def of V}\end{equation}
 where $\delta_{V_{\phi}}$ denotes interior multiplication (i.e.
contraction) with $V_{\phi}.$ Now the $(0,1)-$form $A_{\phi}$ with
values in $T^{1,0}X$ (for $s$ fixed) is simply defined by \begin{equation}
A_{\phi}:=-\overline{\partial}_{X}V_{\phi}\label{eq:def of A in trivial case}\end{equation}
Denote by $\omega_{t}^{X}$ the curvature forms on $X$ evolving with
respect to the time parameter $t$ according to the Kähler-Ricci flow
(for $s$ fixed). The Laplacian on $X$ with respect to $\omega_{t}^{X}$
will be denoted by $\Delta_{\omega_{t}^{X}}.$ Given $\phi(s,\cdot)$
we define the following function on $\mathcal{X}:$ \begin{equation}
c(\phi):=\frac{1}{n}(dd^{c}\phi)^{n+1}/(d_{X}d_{X}^{c}\phi)^{n}\wedge ids\wedge d\bar{s}\label{eq:def of c}\end{equation}
Note that, since $\omega_{\phi}^{X}>0$ on $X$ we have that $c(\phi)>0$
at $(s,x)\in\mathcal{X}$ iff $dd^{c}\phi>0$ at $(s,x).$

\subsubsection*{General submersions}

Next we turn to the case of a general holomorphic submersion $\pi:\,\mathcal{X}\rightarrow S.$
Any given point in $\mathcal{X}$ has a neighborhood $\mathcal{U}$
such that the fibration $\pi:\,\mathcal{U}\rightarrow S$ is holomorphically
trivial and the restriction $\mathcal{L}_{\mathcal{U}}$ is isomorphic
to $\pi^{*}L$ over $\mathcal{U}.$ Hence, the vector field $V_{\phi}$
defined above is\emph{ locally} defined, but in general not \emph{globally}
well-defined on \emph{$\mathcal{X}.$} However, the expression \ref{eq:def of A in trivial case}
turns out to still be globally well-defined. For completeness we will
give a proof of this well-known fact \cite{sc,fs}:
\begin{prop}
\label{pro:A is well-def}The $(0,1)-$form $A_{\phi}$ with values
in $T^{1,0}\mathcal{X}_{s},$ locally defined by formula \ref{eq:def of A in trivial case},
is globally well-defined. It represents the Kodaira-Spencer class
in $H^{1,0}(T^{1,0}\mathcal{X}_{s}).$ \end{prop}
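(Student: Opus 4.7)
The plan is to verify two things: (a) the local expression $A_\phi = -\bar\partial_X V_\phi$ is independent of the choices of local holomorphic trivializations of $\pi$ and of $\mathcal{L}$ used to define $V_\phi$, so it patches to a globally defined $T^{1,0}\mathcal{X}_s$-valued $(0,1)$-form on each fiber; (b) $A_\phi$ is $\bar\partial_X$-closed and its Dolbeault class in $H^{0,1}(\mathcal{X}_s, T^{1,0}\mathcal{X}_s)$ coincides with $\rho(\partial/\partial s)$. Since both claims are local in $S$, I would work in a neighborhood of a fixed point of $\mathcal{X}$ where simultaneous trivializations of $\pi$ and $\mathcal{L}$ are available.

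\textbf{Step 1: well-definedness.} First I would check invariance under the choice of trivialization of $\mathcal{L}$. Two such trivializations differ by a nonvanishing holomorphic transition $g$, under which the weight transforms as $\phi \mapsto \phi - \log|g|^{2}$; then $\partial_s\phi$ changes by the holomorphic term $-\partial_s g/g$, which is annihilated by $\bar\partial_X$. Hence the right-hand side of the defining equation \eqref{eq:def of V} is unchanged and so is $V_\phi$. Next I would compare two local holomorphic trivializations of $\pi$, equivalently two holomorphic lifts $\xi_1,\xi_2$ of $\partial/\partial s$. Their difference $H := \xi_1-\xi_2$ is a $(1,0)$-vector field tangent to the fiber whose coefficients are holomorphic in the fiber coordinates, because fiber coordinate changes are holomorphic. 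A short computation in local coordinates then yields the identity
\[
\delta_H\,\omega_\phi^{X} \;=\; \bar\partial_X(H\cdot\phi),
\]
using precisely that $\bar\partial_X H = 0$. Subtracting the two instances of \eqref{eq:def of V} and invoking fiberwise non-degeneracy of $\omega_\phi^{X}$ forces $V_\phi^{(1)} - V_\phi^{(2)} = H$, whence $\bar\partial_X V_\phi^{(1)} = \bar\partial_X V_\phi^{(2)}$.

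\textbf{Step 2: Kodaira--Spencer representation.} With $V_\phi$ locally defined as above, I would form $\tilde\xi := \partial/\partial s - V_\phi$, which is a smooth $(1,0)$-lift of $\partial/\partial s$ to $T^{1,0}\mathcal{X}$ since $V_\phi$ is vertical. Because $\partial/\partial s$ is holomorphic in the chosen trivialization, $\bar\partial\tilde\xi = -\bar\partial V_\phi$; restricting this to vertical $(0,1)$-vectors and projecting onto the fiber $(1,0)$-direction produces exactly $-\bar\partial_X V_\phi = A_\phi$. By the standard presentation of the Kodaira--Spencer map, in which one feeds any smooth lift of the base vector field into $\bar\partial$ and takes the vertical component, and whose Dolbeault class is independent of the chosen lift, this exhibits $A_\phi$ as a representative of $\rho(\partial/\partial s)$.

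\textbf{Main obstacle.} The single technical point that carries real content is the identity $\delta_H\,\omega_\phi^{X} = \bar\partial_X(H\cdot\phi)$ for a fiberwise-holomorphic $(1,0)$-vector field $H$; this is the mechanism by which the trivialization ambiguity is absorbed into a $\bar\partial_X$-closed term, and once it is in hand everything else reduces to a formal check.
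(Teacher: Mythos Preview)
Your proof is correct and reaches the same conclusion as the paper, but the organization is somewhat different. The paper's argument is more conceptual: it characterizes $W_\phi := \partial/\partial s - V_\phi$ intrinsically as the unique horizontal lift of $\partial/\partial s$ with respect to the \emph{global} $(1,1)$-form $dd^c\phi$ on $\mathcal{X}$, i.e.\ by the two conditions $d\pi(W_\phi)=\partial/\partial s$ and $dd^c\phi(W_\phi,\ker d\pi)=0$. Since these conditions are coordinate-free and determine $W_\phi$ uniquely (by fiberwise non-degeneracy), globality of $W_\phi$---and hence of $A_\phi=(\bar\partial W_\phi)|_{\mathcal{X}_s}$---is immediate. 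You instead verify invariance of $-\bar\partial_X V_\phi$ under each type of local trivialization change; this amounts to the same underlying computation written in transition-function form, and your key identity $V_\phi^{(1)}-V_\phi^{(2)}=H=\xi_1-\xi_2$ is precisely the statement that $W_\phi$ patches globally.

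One small point to tighten: in Step~2 you use that $\tilde\xi=\partial/\partial s - V_\phi$ is a \emph{global} smooth lift of $\partial/\partial s$ (this is what the Kodaira--Spencer recipe requires), but you do not say so explicitly. It follows at once from your Step~1(b) computation, since $\xi_1-V_\phi^{(1)}=\xi_2-V_\phi^{(2)}$; making that link explicit would close the argument cleanly. In summary, the paper's route is slightly slicker (one intrinsic characterization rather than two invariance checks), while yours is more hands-on and avoids introducing the horizontal-lift viewpoint.
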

\begin{proof}
\emph{Step 1:} the locally defined expression \[
W_{\phi}:=\frac{\partial}{\partial s}-V_{\phi}\]
 defines a global vector field on $\mathcal{X}$ of type $(1,0).$ 

Indeed $W_{\phi}$ may be characterized as the\emph{ horizontal lift
of $\frac{\partial}{\partial s}$} with respect to the $(1,1)-$form
$dd^{c}\phi$ on $\mathcal{X},$ which is non-degenerate along fibers.
To see this first note that \begin{equation}
(i)\, d\pi(W_{\phi})=\frac{\partial}{\partial s},\,\,\,\,(ii)\, dd^{c}\phi(W_{\phi},\ker d\pi)=0\label{eq:def prop of hor vf}\end{equation}
The first point is trivial and the second one follows from a direct
calculation: locally we may decompose \[
dd^{c}\phi=d_{z}d_{z}^{c}\phi+\phi_{s\bar{s}}ds\wedge\bar{ds}+(\overline{\partial}_{z}\phi_{s})\wedge ds+(\partial_{z}\phi_{s})\wedge d\bar{s}.\]
 Hence, for any fixed index $i$ \[
dd^{c}\phi(W_{\phi},\frac{\partial}{\partial\bar{z}_{i}})=-d_{z}d_{z}^{c}\phi(V_{\phi},\frac{\partial}{\partial\bar{z}_{i}})+0+(\frac{\partial}{\partial\bar{z}_{i}}\phi_{s})+0=0,\]
 using the definition \ref{eq:def of V} of $V_{\phi}$ in the last
step. Finally, note that the properties \ref{eq:def prop of hor vf}
determine $W_{\phi}$ uniquely: if $W'$ is another local vector field
satisfying \ref{eq:def prop of hor vf} then clearly $Z:=W_{\phi}-W'$
satisfies \[
(i')\, d\pi(Z)=0,\,\,\,\,(ii')\, dd^{c}\phi(Z,\ker d\pi)=0\]
In particular, $Z$ is tangential to the fibers and $dd^{c}\phi(Z,\bar{Z})=0.$
But since $dd^{c}\phi$ is assumed to be non-degenerate along the
fibers it follows that $Z=0.$

Step 2: $A_{\phi}(s)=(\overline{\partial}W_{\phi})_{\mathcal{X}_{s}}$
and $A_{\phi}(s)$ represents the Kodaira-Spencer class in $H^{1,0}(T^{1,0}\mathcal{X}_{s}).$

The first formula above follows immediately from a local computation
and the second one then follows directly from the definition of the
Kodaira-Spencer class (where $W_{\phi}$ may be taken as\emph{ any
}smooth lift to $T^{1,0}\mathcal{X}$ of the vector field $\frac{\partial}{\partial s}$
\cite{v}).
\end{proof}
As for the function $c(\phi)$ defined by formula \ref{eq:def of c}
it is still well-defined as we have fixed an embedding of $S$ in
$\C.$

\subsubsection{Conservation of positivity along the relative Kähler-Ricci flow}

Next, we will prove the following theorem which is one of the main
results of the present paper. 
\begin{thm}
\label{thm:heat eq for c in c-y}Let $\pi:\mathcal{\, X}\rightarrow S$
be a proper holomorphic submersion with Calabi-Yau fibers and let
$\mathcal{L}$ be a relatively ample line bundle over $\mathcal{X}.$
Assume that the base $S$ is a domain in $\C.$ The following equation
holds along the corresponding relative Kähler-Ricci flow: \begin{equation}
(\frac{\partial}{\partial t}-\Delta_{\omega_{t}^{X}})c(\phi)=|A_{\phi}|_{\omega_{t}^{X}}^{2}-\left\Vert A_{CY}\right\Vert _{\omega_{CY}^{X}}^{2}\label{eq:heat eq for c in c-y}\end{equation}
\end{thm}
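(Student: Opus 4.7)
The calculation is local on $\mathcal{X}$, so I fix local holomorphic coordinates $(s, z^1, \ldots, z^n)$ adapted to the fibration and a local holomorphic trivialization $\Omega = f(s, z)\,dz^1 \wedge \cdots \wedge dz^n$ of $K_{\mathcal{X}/S}$, in which the normalized CY measure reads $\mu_s = c_n|f|^2\,d\lambda/Z(s)$ with $Z(s) = c_n\int_{\mathcal{X}_s}|f|^2\,d\lambda$. By Proposition \ref{pro:A is well-def} together with a block-matrix calculation, one has
$$c(\phi) = \phi_{s\bar s} - \phi^{\bar l i}\phi_{s\bar l}\phi_{i\bar s}, \qquad V_\phi^i = \phi^{\bar l i}\phi_{s\bar l},\qquad A_\phi{}^i{}_{\bar k} = -\partial_{\bar k} V_\phi^i.$$
The Tian--Todorov identity \eqref{eq:wp as curv} gives $\partial_s\partial_{\bar s}\log Z(s) = \|A_{CY}\|^2_{\omega_{CY}^X}$, and the flow equation becomes $\dot\phi = \log\det(\phi_{i\bar j}) - \log|f|^2 + \log Z(s) + \mathrm{const}$.

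My plan is to compute $(\partial_t - \Delta_{\omega^X})c(\phi_t)$ by treating the two summands in $c = \phi_{s\bar s} - |V_\phi|^2_{\omega^X}$ separately. For the first summand, applying $\partial_s\partial_{\bar s}$ to the flow equation and using Jacobi's identity for the variation of $\log\det$, the holomorphicity $\partial_s\partial_{\bar s}\log|f|^2 = 0$, and Tian--Todorov, yields
$$\partial_t\phi_{s\bar s} = \phi^{\bar j i}\phi_{i\bar j,s\bar s} - \phi^{\bar j a}\phi^{\bar b i}\phi_{a\bar b,\bar s}\phi_{i\bar j,s} + \|A_{CY}\|^2_{\omega_{CY}^X}.$$
Since $\partial_s$ commutes with the fiber partials, $\Delta_{\omega^X}\phi_{s\bar s} = \phi^{\bar j i}\phi_{i\bar j, s\bar s}$, which cancels the pure Jacobi term and leaves the mixed Jacobi residue together with the Weil--Petersson contribution.

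For the second summand, I expand $\partial_t|V_\phi|^2_{\omega^X}$ via $\partial_t\phi^{\bar l i} = -\phi^{\bar l a}\dot\phi_{a\bar b}\phi^{\bar b i}$ and $\partial_t\phi_{s\bar l} = \partial_s\partial_{\bar l}\dot\phi$, and expand $\Delta_{\omega^X}|V_\phi|^2_{\omega^X}$ through a Bochner-type identity on the fiber. The mixed-Jacobi residue from the first summand cancels against one of the cross-terms produced, and what remains is organized by the representative $A_\phi = -\bar\partial V_\phi$: the holomorphic-type terms in $\partial V_\phi$ combine with the commutator between $\partial_s$ and the fiber Laplacian to produce precisely $|A_\phi|^2_{\omega^X}$, while the accounting of the $s\bar s$-derivatives of $\log Z$ leaves the net sign $-\|A_{CY}\|^2$ in the statement. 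Summing the two blocks gives $(\partial_t - \Delta_{\omega^X})c(\phi) = |A_\phi|^2_{\omega^X} - \|A_{CY}\|^2_{\omega_{CY}^X}$.

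The main obstacle is the dense index bookkeeping in the second block, where many curvature-type terms must be tracked and identified with the single quantity $|A_\phi|^2$. A more conceptual route, which I expect to be essentially equivalent, starts from the global identity
$$\frac{(dd^c\phi)^{n+1}}{(n+1)!} = c(\phi)\,\frac{(d_Xd_X^c\phi)^n}{n!}\wedge ids\wedge d\bar s;$$
taking $\partial_t\log$ gives $\partial_t\log c(\phi) = (\Delta_{\omega_\phi} - \Delta_{\omega^X})\dot\phi$, while taking $dd^c$ on $\mathcal{X}$ gives $dd^c\log c(\phi) = \mathrm{Ric}_{\mathcal{X}/S}(\omega^X) - \mathrm{Ric}(\omega_\phi)$ (a consequence of $\det\phi_{A\bar B} = c(\phi)\det\phi_{i\bar j}$). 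The block-inverse formula for $\Delta_{\omega_\phi}$ in terms of $\Delta_{\omega^X}$ and the horizontal lift $W_\phi$ reveals the commutator that produces the $|A_\phi|^2$ term, while the $\|A_{CY}\|^2$ term enters through the curvature of $\pi_*K_{\mathcal{X}/S}$ on $S$. This is the parabolic analogue of Schumacher's elliptic identity in \cite{sc}, which is both the model for the theorem and the source of the computational technique.
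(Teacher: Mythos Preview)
Your approach is the paper's own: a direct local-coordinate computation at a point where $\phi_{i\bar j}=\delta_{ij}$, with the Weil--Petersson term entering through the $s$-dependence of the fiber normalization. The only organizational difference is that the paper computes $\partial_t c$ and $\Delta_{\omega^X}c$ in full (its Steps~1--2) and then subtracts (Step~3), whereas you split $c=\phi_{s\bar s}-|V_\phi|^2$ and apply $\partial_t-\Delta$ to each summand; these are the same sums regrouped.

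That said, what you have written is an outline, not a proof. The entire content of the identity lives in your ``second block'', and the paper carries this out explicitly: after subtraction the surviving terms are shown to equal
\[
\sum_{m,k}\bigl|\phi_{s\bar m\bar k}-\textstyle\sum_l\phi_{s\bar l}\phi_{l\bar m\bar k}\bigr|^2=|\bar\partial_X V_\phi|^2,
\]
which you describe only as ``a Bochner-type identity'' whose terms ``combine'' and ``cancel''. There is also a sign inconsistency in your Weil--Petersson bookkeeping. With your (correct) flow $\dot\phi=\log\det(\phi_{i\bar j})-\log|f|^2+\log Z(s)$ together with your citation $\partial_s\partial_{\bar s}\log Z=+\|A_{CY}\|^2$, the first block contributes $+\|A_{CY}\|^2$ to $(\partial_t-\Delta)c$; since $\log Z$ is fiber-constant it cannot enter $\partial_t|V|^2$ or $\Delta|V|^2$, so the sign cannot flip in the second block as you assert. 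In fact the curvature of $\pi_*K_{\mathcal{X}/S}$ with the $L^2$-metric is $-dd^c\log Z\ge0$, so the correct Tian--Todorov sign is $\partial_s\partial_{\bar s}\log Z\le0$; the paper's Step~4 writes the flow with the opposite sign on $\psi_\Omega$, so its two conventions cancel, while you have mixed them. Finally, your ``more conceptual route'' via $\partial_t\log c=(\Delta_{\omega_\phi}-\Delta_{\omega^X})\dot\phi$ presupposes $c>0$ (otherwise $\omega_\phi$ is degenerate on $\mathcal{X}$ and $\Delta_{\omega_\phi}$ is undefined), and in any case the block-inverse expansion of $\Delta_{\omega_\phi}$ returns you to the same index calculation you were trying to avoid.
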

\begin{proof}
Since it will be enough to prove the identity at a fixed point $x$
in $X$ in some local holomorphic coordinates and trivializations
we may as well assume that $\omega_{\phi}$ is the Euclidean metric
at the point $x,$ i.e. that the complex Hessian matrix $(\partial^{2}\phi/\partial z_{i}\bar{\partial_{z_{j}}})$
is the identity for $z=0$ (corresponding to the fixed point $x$
in $X).$ Moreover, we may assume that locally the holomorphic $n-$form
$\Omega$ may be expressed as $\Omega=dz_{1}\wedge...\wedge dz_{n}.$Partial
derivatives with respect to $s$ will be indicated by a subscript
$s$ and partial derivatives with respect to $z_{i}$ and $\bar{z}_{j}$
by subscripts $i$ and $\bar{j}$ respectively. If $h=(h_{ij})$ is
an Hermitian matrix we will write $(h^{ij})$ for the matrix $\overline{H}^{-1}.$
The summation convention according to which repeated indices are to
be summed over will be used. Next, we turn to the proof of the theorem
which is based on a direct and completely elementary calculation.

\emph{Step 1:} the following formula holds in the case of a holomorphically
trivial fibration

\[
\frac{\partial}{\partial t}c(\phi)=\phi_{i\bar{i}s\bar{s}}+\phi_{s\bar{i}}\overline{\phi_{s\bar{j}}}\phi_{i\bar{j}k\bar{k}}-\phi_{i\bar{j}s}\overline{\phi_{i\bar{j}s}}-\phi_{s\bar{i}}\overline{\phi_{s\bar{j}}}\phi_{ik\bar{l}}\phi_{j\bar{k}l}-2\Re(\phi_{k\bar{k}s\bar{i}}\overline{\phi_{si}})+2\Re(\phi_{k\bar{l}s}\phi_{k\bar{l}\bar{i}}\overline{\phi_{s\bar{i}}})\]
To see this first recall that \[
c(\phi)=\phi_{s\bar{s}}-\Re(\phi_{s\bar{i}}\overline{\phi_{s\bar{j}}}\phi^{i\bar{j}})\]
and hence (using that $\phi_{i\bar{j}}=\delta_{ij}$ at $z=0,$ so
that $\frac{\partial}{\partial t}\phi^{i\bar{j}}=-\phi_{\bar{j}i}$
at $z=0)$ \begin{equation}
\frac{\partial}{\partial t}c(\phi)=\frac{\partial}{\partial t}\phi_{s\bar{s}}-2\Re(\frac{\partial}{\partial t}\phi_{s\bar{i}})\overline{\phi_{s\bar{i}}})+(\phi_{s\bar{i}}\overline{\phi_{s\bar{j}}})\frac{\partial}{\partial t}\phi_{\bar{j}i}=\label{eq:pf of heat eq cy}\end{equation}
 Using the definition of the relative Kähler-Ricci flow in the Calabi-Yau
case and the simple fact that the linearization of $\psi\mapsto\log\det(\psi_{k\bar{l}})$
at $\psi$ is given by $u\mapsto\Delta_{\omega_{\psi}}u,$ where $\Delta_{\omega_{\psi}}u=\psi^{k\bar{l}}u_{k\bar{l}}$
is the Laplacian with respect to the Kähler metric $\omega_{\psi}$
hence gives 

\[
\frac{\partial}{\partial t}c(\phi)=(\log(\det\phi_{i\bar{j}}))_{s\bar{s}}-2\Re((\log\det(\phi_{k\bar{l}}))_{s\bar{i}}\overline{\phi_{si}})+(\phi_{si}\overline{\phi_{s\bar{j}}})((\log(\det\phi_{k\bar{l}}))_{i\bar{j}}=\]
\[
=(\phi_{i\bar{j}s}\phi^{i\bar{j}})_{\bar{s}}-2\Re(\phi_{k\bar{l}s}\phi^{k\bar{l}})_{\bar{i}}\overline{\phi_{s\bar{i}}}+(\phi_{s\bar{i}}\overline{\phi_{s\bar{j}}})(\phi_{ik\bar{l}}\phi^{k\bar{l}})_{\bar{j}}=\]
\[
=\phi_{i\bar{i}s\bar{s}}-(\phi_{i\bar{i}s\bar{s}}+\phi_{i\bar{j}s}\phi_{j\bar{i}\bar{s}})-2\Re[(\phi_{k\bar{k}s\bar{i}}\overline{\phi_{s\bar{i}}}-\phi_{k\bar{l}s}\phi_{l\bar{k}\bar{i}}\overline{\phi_{s\bar{i}}}]+(\phi_{s\bar{i}}\overline{\phi_{s\bar{j}}})(\phi_{i\bar{j}k\bar{k}}-\phi_{ik\bar{l}}\phi_{j\bar{k}l})\]
(again using $\phi_{i\bar{j}}=\delta_{ij}$ at $z=0),$ finishing
the proof of step 1.

\emph{Step 2: }the following formula holds in the case of a trivial
fibration: \[
c(\phi)_{k\bar{k}}=\phi_{k\bar{k}s\bar{s}}+(\phi_{s\bar{i}}\overline{\phi_{s\bar{j}}})(\phi{}_{k\bar{k}j\bar{i}}-2(\phi_{s\bar{i}}\overline{\phi_{s\bar{j}}})\phi_{\bar{k}\bar{i}m}\phi_{k\bar{m}j}-\phi_{ks\bar{i}}\overline{\phi_{ks\bar{i}}}-\overline{\phi_{\bar{k}s\bar{i}}}\phi_{\bar{k}s\bar{i}}+2\Re(\phi_{ks\bar{i}}\overline{\phi_{s\bar{j}}}\phi_{\bar{k}j\bar{i}})\]

\[
+2\Re(\overline{\phi_{\bar{k}s\bar{i}}}\phi_{s\bar{j}})\phi_{\bar{k}j\bar{i}}-2\Re\phi_{\bar{k}ks\bar{i}}\overline{\phi_{s\bar{i}}}\]

To see this we first differentiate $c(\phi)$ with respect to $z_{k}$
to get \[
c(\phi)_{k}=\phi_{ks\bar{s}}-[(\phi_{s\bar{i}}\overline{\phi_{s\bar{j}}})_{k}\phi^{i\bar{j}}+(\phi_{s\bar{i}}\overline{\phi_{s\bar{j}}})(\phi^{i\bar{j}})_{k}]=\]
\[
=\phi_{ks\bar{s}}-(\phi_{ks\bar{i}}\overline{\phi_{s\bar{j}}}+\overline{\phi_{\bar{k}s\bar{i}}}\phi_{s\bar{j}})\phi^{i\bar{j}}-(\phi_{s\bar{i}}\overline{\phi_{s\bar{j}}})(\phi^{i\bar{j}})_{k}\]
Next, note that if $h$ is a function with values in the space of
Hermitian matrices and $\partial$ a derivation satisfying Leibniz
rule, then \[
\partial(h^{-1})=-h^{-1}(\partial h)h^{-1}.\]
In particular, if $h(0)=I$ then the following holds at $0:$ \[
(\bar{h}^{-1})_{k\bar{k}}=-\bar{h}_{k\bar{k}}+(\bar{h}_{\bar{k}}\bar{h}_{k}+\bar{h}_{k}\bar{h}_{\bar{k}})\]
Applying this to $h=(\phi_{i\bar{j}})$ (when expanding the term $A$
below) gives \[
c(\phi)_{k\bar{k}}=\phi_{k\bar{k}s\bar{s}}-\left([(\phi_{ks\bar{i}}\overline{\phi_{s\bar{i}}})_{\bar{k}}+\overline{(\phi_{\bar{k}s\bar{i}}}\phi_{s\bar{i}})_{\bar{k}}]-(\phi_{ks\bar{i}}\overline{\phi_{s\bar{j}}}+\overline{\phi_{\bar{k}s\bar{i}}}\phi_{sj})\phi_{\bar{k}i\bar{j}}\right)-A=\]
\[
\phi_{k\bar{k}s\bar{s}}-\left([\phi_{\bar{k}ks\bar{i}}\overline{\phi_{s\bar{i}}}+\phi_{ks\bar{i}}\overline{\phi_{ks\bar{i}}}+\overline{\phi_{k\bar{k}s\bar{i}}}\phi_{s\bar{i}}+\overline{\phi_{\bar{k}s\bar{i}}}\phi_{\bar{k}s\bar{i}}]-(\phi_{ks\bar{i}}\overline{\phi_{s\bar{j}}}+\overline{\phi_{\bar{k}s\bar{i}}}\phi_{s\bar{j}})\phi_{\bar{k}i\bar{j}}\right)-A\]
where \[
A:=(\phi_{s\bar{i}}\overline{\phi_{s\bar{j}}})_{\bar{k}}(\phi^{i\bar{j}})_{k}+(\phi_{s\bar{i}}\overline{\phi_{s\bar{j}}})(\phi^{i\bar{j}})_{k\bar{k}}=-(\phi_{s\bar{i}}\overline{\phi_{s\bar{j}}})_{\bar{k}}\phi{}_{kj\bar{i}}+(\phi_{s\bar{i}}\overline{\phi_{s\bar{j}}})(-(\phi{}_{k\bar{k}j\bar{i}}+2\Re(\phi_{\bar{k}\bar{i}m}\phi_{k\bar{m}j}))\]

\[
=-(\phi_{s\bar{i}\bar{k}}\overline{\phi_{s\bar{j}}}+\phi_{s\bar{i}}\overline{\phi_{s\bar{j}k}})\phi{}_{kj\bar{i}}-(\phi_{s\bar{i}}\overline{\phi_{s\bar{j}}})(\phi{}_{k\bar{k}j\bar{i}}+2\Re(\phi_{\bar{k}\bar{i}m}\phi_{k\bar{m}j}))\]
Hence, \[
c(\phi)_{k\bar{k}}=\phi_{k\bar{k}s\bar{s}}-[\phi_{\bar{k}ks\bar{i}}\overline{\phi_{s\bar{i}}}+\phi_{ks\bar{i}}\overline{\phi_{ks\bar{i}}}+\overline{\phi_{k\bar{k}s\bar{i}}}\phi_{s\bar{i}}+\overline{\phi_{\bar{k}s\bar{i}}}\phi_{\bar{k}s\bar{i}}]+(\phi_{ks\bar{i}}\overline{\phi_{s\bar{j}}}+\overline{\phi_{\bar{k}s\bar{i}}}\phi_{s\bar{j}})\phi_{\bar{k}j\bar{i}}+\]
\[
+(\phi_{s\bar{i}\bar{k}}\overline{\phi_{s\bar{i}}}+\phi_{s\bar{i}}\overline{\phi_{s\bar{i}k}})\phi{}_{kj\bar{i}}+(\phi_{s\bar{i}}\overline{\phi_{s\bar{j}}})(\phi{}_{k\bar{k}j\bar{i}}-2\Re(\phi_{s\bar{i}}\overline{\phi_{s\bar{j}}}\phi_{\bar{k}\bar{i}m}\phi_{k\bar{m}j}),\]
 which finishes the proof of Step 2., 

\emph{Step 3:} end of proof of the theorem for a trivial fibration

Subtracting the formulas from the previous steps gives, due to cancellation
of several terms, $\frac{\partial}{\partial t}c(\phi)-c(\phi)_{k\bar{k}}=$
\[
=\phi_{s\bar{m}\bar{k}}\overline{\phi_{s\bar{m}\bar{k}}}+(\phi_{s\bar{i}}\overline{\phi_{s\bar{j}}})\phi_{\bar{k}\bar{i}m}\phi_{k\bar{m}j}-2\Re(\overline{\phi_{\bar{sk}\bar{m}}}\phi_{s\bar{l}}\phi_{\bar{k}l\bar{m}})=\]
\[
=\sum_{m,k}|\phi_{s\bar{m}\bar{k}}-\sum_{l}\phi_{s\bar{l}}\phi_{\bar{k}\bar{m}l}|^{2}\]
Finally, note that \[
\phi_{s\bar{m}\bar{k}}-\sum_{l}\phi_{s\bar{l}}\phi_{\bar{k}\bar{m}l}=(\phi_{s\bar{m}})_{\bar{k}}-(\phi_{\bar{m}l})_{\bar{k}}\sum_{l}\phi_{s\bar{l}}=(\phi_{s\bar{l}}\phi^{m\bar{l}})_{\bar{k}}=(V_{m})_{\bar{k}}\]
(using $\phi_{i\bar{j}}=\delta_{ij}$ at $z=0)$, where $V=(V_{1},...V_{n})$
is the $(0,1)-$vector field \ref{eq:def of V} expressed in local
normal coordinates. This hence finishes the proof of the theorem in
the case of a trivial fibration.

\emph{Step 4:} equation \ref{eq:heat eq for c in c-y} holds for a
general holomorphic submersion:

Computing locally as before the only new contribution comes from the
derivatives on the local function $\psi_{\Omega}(s)$ defined by formula
\ref{eq:wp as curv}, which appear in the definition of the relative
Kähler-Ricci flow \ref{eq:k-r flow of weights in intro} in the Calabi-Yau
case. Indeed, locally this latter flow may be written as \[
\frac{\partial\phi}{\partial t}=\log\det(\phi_{k\bar{l}})-\psi_{\Omega}(s)\]
 and the only new contribution to the previous calculations hence
come from the term $-(\psi_{\Omega}(s))_{s\bar{s}}$ which appears
in the calculation of $(\frac{\partial\phi}{\partial t})_{s\bar{s}}.$
Combining formulae \ref{eq:def of w-p}, \ref{eq:wp as curv} hence
proves that equation \ref{eq:heat eq for c in c-y} holds locally
on $\mathcal{X}.$ Since, all objects appearing the resulting local
formula have been shown to be globally well-defined this hence finishes
the proof of Step 4.
\end{proof}
Now the maximum principle for parabolic equations \cite{p-w} implies
the following
\begin{cor}
\label{cor:conserv of pos along k-r}Let $\mathcal{L}\rightarrow\mathcal{X}\rightarrow S$
be a line bundle over a fibration as in the previous theorem. 
\begin{itemize}
\item If the fibration is holomorphically non-trivial, then the function
$c(t):=\inf_{X}c(\phi)$ is, for a fixed value on $s,$ increasing
along the relative Kähler-Ricci flow and hence the flow preserves
(semi-) positivity of the curvature of $\phi.$
\item For a holomorpically trivial fibration $\mathcal{X}=X\times S$, with
$\mathcal{L}$ the pull-back of an ample line bundle $L\rightarrow X,$
the flow \emph{improves} the positivity of a generic initial weight
in the following sense: if $\phi_{0}$ is a semi-positively curved
weight on $\mathcal{L}$ over $X\times S$ such that $\partial\phi/\partial s$
does not vanish identically on $X\times\{s\}$ for any $s,$ then
$\phi_{t}$ is \emph{strictly }positively curved on $X\times S$ for
$t>0.$
\item In the general case the (semi-) positivity of the curvature of the
weight on $\phi-t\psi_{\Omega}$ on the $\R-$line bundle $\mathcal{L}-tK_{\mathcal{X}/S}$
is preserved under the flow, i.e. \[
dd^{c}\phi_{t}\geq-t\omega_{WP}\]
for all $t$ (and similarly in the strict case)
\end{itemize}
\end{cor}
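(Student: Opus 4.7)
The proof rests on the parabolic maximum principle applied to the heat equation of Theorem \ref{thm:heat eq for c in c-y}. Fix $s\in S$ and restrict to the fiber $X=\mathcal{X}_s$; writing $w(s):=\|A_{CY}(s)\|^2_{\omega_{CY}}$ for the (fiber-constant) Weil--Petersson density at $s$, that equation reads
\[
(\partial_t-\Delta_{\omega_t^X})\,c(\phi_t)=|A_{\phi_t}|^2_{\omega_t^X}-w(s).
\]
The structural fact I will use repeatedly is that $c(\phi)>0$ (resp.\ $\geq 0$) at a point of $\mathcal{X}$ is equivalent to $dd^c\phi$ being strictly (resp.\ semi-) positive there, via the Schur-complement description of positivity for the Hessian block-matrix with respect to the base/fiber splitting; this is how $c(\phi)$ was built.

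For bullet (1), assume the fibration is holomorphically trivial, so the Kodaira--Spencer class vanishes identically, $\omega_{WP}\equiv 0$, and the source $|A_{\phi_t}|^2\geq 0$. The weak parabolic maximum principle on the compact fiber $X$ shows that $t\mapsto\min_{x\in X}c(\phi_t)(x)$ is nondecreasing, so that (semi-)positivity of $dd^c\phi_0$ on $\mathcal{X}$ is inherited by every $dd^c\phi_t$.

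For bullet (2), keep the trivial fibration with $\mathcal{L}=\pi^*L$ and upgrade to the strong maximum principle. If $c(\phi_{t_1})(x_0)=0$ at some $t_1>0$ while $c(\phi_0)\geq 0$, then $c(\phi_t)\equiv 0$ on $X\times[0,t_1]$ and $|A_{\phi_t}|^2\equiv 0$ there. Since $A_{\phi_t}=-\bar\partial_X V_{\phi_t}$ with $V_{\phi_t}$ determined by $\delta_{V_{\phi_t}}\omega_t^X=\bar\partial_X(\partial_s\phi_t)$, this forces $V_{\phi_t}$ to be a holomorphic vector field on $X$; combined with the defining identity for $V_{\phi_t}$ and the compactness of $X$, one deduces that $\partial_s\phi_t$ is fiberwise constant for $t\in[0,t_1]$, which specialized to $t=0$ contradicts the hypothesis. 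Hence $c(\phi_t)>0$ on $\mathcal{X}\times(0,\infty)$.

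For bullet (3), set $\tilde c_t(x):=c(\phi_t)(x)+t\,w(s)$ on the fiber $X$. Since $w(s)$ is independent of $x$, $\Delta_{\omega_t^X}(tw(s))=0$ and
\[
(\partial_t-\Delta_{\omega_t^X})\tilde c_t=|A_{\phi_t}|^2_{\omega_t^X}\geq 0,
\]
so the weak maximum principle yields $c(\phi_t)\geq -t\,w(s)$ on each fiber; by the Schur-complement equivalence this is exactly $dd^c\phi_t\geq -t\,\omega_{WP}$ on $\mathcal{X}$, and the strict variant follows from the strong maximum principle. The principal obstacle is bullet (2): turning ``$V_{\phi_t}$ holomorphic along a time strip'' into constancy of $\partial_s\phi_0$ on the initial fiber requires careful handling of the equation $\delta_V\omega^X=\bar\partial_X f$ with $V$ holomorphic, essentially exploiting the finite-dimensionality of holomorphic vector fields and the fact that any such $f$ must then be $\bar\partial_X$-closed, hence constant; the other two bullets reduce to reading off the sign of $w(s)$ in the source of the equation from Theorem \ref{thm:heat eq for c in c-y} and a standard application of the maximum principle.
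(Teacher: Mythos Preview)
Your treatment of the first and third bullets is correct and coincides with the paper's: both follow from the weak parabolic maximum principle applied to the equation of Theorem \ref{thm:heat eq for c in c-y}, using that $w(s)=0$ when the fibration is holomorphically trivial and that the shifted quantity $c(\phi_t)+tw(s)$ is a supersolution of the fiberwise heat equation in general.

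For the second bullet your strategy (the strong maximum principle forces $c\equiv 0$ and $A_{\phi_t}\equiv 0$ on a time strip, hence $V_{\phi_0}$ is holomorphic on the fiber) is also the paper's, but the passage from ``$V_{\phi_0}$ holomorphic'' to ``$\partial_s\phi_0$ fiberwise constant'' is exactly where the content lies, and your stated justification does not work. The claim that ``$f$ must then be $\bar\partial_X$-closed'' is vacuous for a function, and finite-dimensionality of $H^0(TX)$ by itself is not enough: on $\mathbb{P}^1$ with the Fubini--Study form there are nonzero holomorphic vector fields $V$ satisfying $\delta_V\omega=\bar\partial_X f$ with $f$ nonconstant, so compactness alone cannot give the conclusion. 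What is needed is precisely the Calabi--Yau hypothesis, which you never invoke. The paper uses the Bochner--Weitzenb\"ock formula: on a Ricci-flat K\"ahler manifold every holomorphic vector field is covariantly constant. The defining relation $\delta_{V_{\phi_0}}\omega^X_{\phi_0}=\bar\partial_X(\partial_s\phi_0)$ says that the imaginary part of $V_{\phi_0}$ is Hamiltonian for a real function, hence vanishes at any critical point of that function; being parallel with respect to the Ricci-flat metric, it then vanishes identically, and likewise for the real part. This gives $V_{\phi_0}=0$, whence $\bar\partial_X(\partial_s\phi_0)=0$ and $\partial_s\phi_0$ is constant on the fiber. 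You should supply this Bochner step explicitly; without the Calabi--Yau input the argument is genuinely incomplete.
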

\begin{proof}
The first and third point follow from the maximum principle exactly
as in in the proof of Cor \ref{cor:pos along flow in kx-s} below.
The second point is proved as follows: if strict positivity does not
hold then one concludes (see the proof of Cor \ref{cor:pos along flow in kx-s}
below) that $-A_{\phi_{0}}=\overline{\partial}_{X}V_{\phi_{0}}$ vanishes
identically on $X$ for some $s_{0}$, i.e. the corresponding vector
field $V_{\phi_{0}}$ defined by \ref{eq:def of V} is holomorphic
on $X$. But, it is a well-known fact that any such holomorphic vector
field $V^{1,0}$ vanishes identically when $X$ is a Calabi-Yau manifold
and hence $\partial\phi/\partial s$ vanishes identically on $X\times\{s_{0}\}$
giving a a contradiction. The vanishing of $V^{1,0}$ may be proved
as follows: by a Bochner-Weitzenbock formula $V^{1,0}$ is covariantly
constant wrt any Ricci flat metric on $X.$ Moreover, the imaginary
part $V_{I}$ satisfies $\omega_{\phi_{0}}(V_{I},\cdot)=df$ for some
real smooth function $f.$ But since $\omega_{\phi_{0}}^{X}>0$ on
$X\times\{s_{0}\}$ the latter equation forces the vanishing of $V_{I}$
at any point where $f$ achieves it maximum and hence $V_{I}\equiv0$
on $X.$ Similarly, the real part $V_{R}$ of $V^{1,0}$ vanishes
identically (by replacing $df$ with $d^{c}f).$ 
\end{proof}
Of course, in the case of a infinitesimally non-trivial fibration
the inequality in the previous corollary is useless for the limit
$\phi_{\infty},$ but its interest lies in the fact that it gives
a lower bound on the (possible) loss of positivity along the relative
Kähler-Ricci flow, which is independent of the initial data.
\begin{rem}
Throughout the paper we assume, for simplicity, that the initial weight
$\phi_{0}$ has relatively positive curvature, when restricted to
the fibers of the $\mathcal{X}.$ But, as in the previous corolllary,
we do allow $\phi_{0}$ to have merely semi-positive curvature over
the total space $\mathcal{X}.$ However, using recent developments
for the Kähler-Ricci flow (see for example \cite{s-t}) the relative
Kähler-Ricci flows are actually well-defined for any weight $\phi_{0}$
which has merely relatively \emph{semi-}positive curvature and $\phi_{t}$
becomes relatively positively curved for any $t>0.$ Using this result
the previous corollary can be seen to be valid for a general semi-positively
curved initial weight $\phi_{0}.$
\end{rem}

\subsubsection{Evolution of the curvature of the top Deligne pairing}

For a general smooth base $S$ (i.e. not necessarily embedded in $\C)$
the weight $\phi$ on $L$ naturally induces a closed $(1,1)-$form
$\Theta_{\phi}(s)$ on $S$ expressed as \[
\Theta_{\phi}:=\pi_{*}((dd^{c}\phi)^{n+1}/(n+1)!)\]
 Equivalently, for any local holomorphic curve $C\subset S$ with
tangent vector $\frac{\partial}{\partial s}\in TS$ 

\[
\Theta_{\phi}(\frac{\partial}{\partial s},\frac{\partial}{\partial\bar{s}}):=\int_{\chi_{s}}c(\phi)\omega_{\phi}^{n}/n!\]
 where $s\in C$ and $\pi$ is the induced map $\pi:\mathcal{\, X}\rightarrow C.$
Geometrically, the form $\Theta_{\phi}$ on $S$ may be described
as the curvature of the Hermitian holomorphic line bundle $(\mathcal{L},\phi)^{n+1}$
over $S$ defined as the top \emph{Deligne pairing} of the Hermitian
holomorphic line bundle $(\mathcal{L},\phi)\rightarrow\mathcal{X}\rightarrow S$
(see \cite{de}; the relevance of Deligne pairings for Kähler geometry
has been emphasized by Phong-Sturm \cite{p-s}). The form $\Theta_{\phi}$
also appears as a multiple of the curvature of the Quillen metric
on the determinant of the direct image of a certain virtual vector
bundle over $\mathcal{X}$ (see \cite{fs} and references therein).

Similarly, one can define a $(1,1)-$form $\omega_{WP_{\phi}}$ on
$S$ depending on $\phi$ by letting \[
\omega_{WP_{\phi}}(\frac{\partial}{\partial s},\frac{\partial}{\partial\bar{s}}):=\int_{\chi_{s}}|A_{\phi}(s)|^{2}\omega_{\phi}^{n}/n!\]
It can be checked that this yields a well-defined $(1,1)-$form on
$\mathcal{X}.$ Anyhow this latter fact is also a consequence of the
following corollary of the previous theorem. 
\begin{cor}
(same assumptions as in the previous theorem). Let $\Theta_{\phi_{t}}$
be the curvature form on $S$ of the top Deligne pairing of $(\mathcal{L},\phi)\rightarrow\mathcal{X}\rightarrow S,$
where $\phi_{t}$ evolves according to the relative Kähler-Ricci flow
in the Calabi-Yau case. Then \[
\frac{\partial}{\partial t}\Theta_{\phi}(s)=-\pi_{*}(R_{\omega_{\phi}^{X}}(dd^{c}\phi)^{n+1}/(n+1)!)+\omega_{WP_{\phi_{t}}}-\omega_{WP},\]
 where $R_{\omega_{\phi}^{X}}$ denotes the fiber-wise scalar curvature
of the metric $\omega_{\phi}.$ \end{cor}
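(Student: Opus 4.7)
The plan is to differentiate the pointwise identity
\[
\Theta_{\phi_t}\!\left(\tfrac{\partial}{\partial s},\tfrac{\partial}{\partial \bar s}\right)=\int_{\mathcal{X}_s} c(\phi_t)\,(\omega_t^X)^n/n!
\]
in $t$ along the flow (justified by the smoothness in Theorem \ref{thm:cao}) and to plug in the heat equation of Theorem \ref{thm:heat eq for c in c-y}. Since $S$ may be taken locally one-dimensional, it suffices to identify the two sides on $(\partial_s,\partial_{\bar s})$.

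First I would analyze how $(\omega_t^X)^n/n!$ evolves on a fixed fiber $\mathcal{X}_s$. In a local trivialization the Calabi-Yau flow reads $\partial_t\phi_t=\log\det((\phi_t)_{k\bar l})-\psi_\Omega(s)$; the base function $\psi_\Omega(s)$ is killed by $d_Xd^c_X$, so $\partial_t\omega_t^X=d_Xd^c_X\log\det((\phi_t)_{k\bar l})=-\mathrm{Ric}(\omega_t^X)$, i.e.\ the fiberwise flow is exactly the usual Kähler-Ricci flow. The standard computation $\partial_t\log\det g=\mathrm{tr}(g^{-1}\partial_t g)$ then gives
\[
\partial_t\bigl((\omega_t^X)^n/n!\bigr)=-R_{\omega_t^X}\,(\omega_t^X)^n/n!
\]
on each fiber.

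Next, differentiating under the integral sign over the fixed fiber $\mathcal{X}_s$ and substituting Theorem \ref{thm:heat eq for c in c-y},
\[
\partial_t\!\!\int_{\mathcal{X}_s}\! c(\phi_t)(\omega_t^X)^n/n!=\int_{\mathcal{X}_s}\!\bigl(\Delta_{\omega_t^X}c(\phi_t)+|A_{\phi_t}|^2_{\omega_t^X}-\|A_{CY}\|^2_{\omega_{CY}^X}\bigr)(\omega_t^X)^n/n!-\int_{\mathcal{X}_s}\! R_{\omega_t^X}c(\phi_t)(\omega_t^X)^n/n!.
\]
Stokes' theorem kills the Laplacian integral. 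By the very definition of $\omega_{WP_{\phi_t}}$, the integral of $|A_{\phi_t}|^2_{\omega_t^X}(\omega_t^X)^n/n!$ over $\mathcal{X}_s$ equals $\omega_{WP_{\phi_t}}(\partial_s,\partial_{\bar s})$. Since $\|A_{CY}\|^2_{\omega_{CY}^X}=\omega_{WP}(\partial_s,\partial_{\bar s})$ depends only on $s$ (via the defining formula \eqref{eq:def of w-p}), it factors out of the fiber integral, and the normalization $V=\int_{\mathcal{X}_s}(\omega_t^X)^n/n!=1$ yields exactly $\omega_{WP}(\partial_s,\partial_{\bar s})$.

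Finally, using the same fiber-integration identity that defines $\Theta_\phi$, namely that multiplication of a base-direction $(1,1)$-pairing by a function $f$ on $\mathcal{X}$ before push-forward satisfies $\pi_*\bigl(f\,(dd^c\phi)^{n+1}/(n+1)!\bigr)(\partial_s,\partial_{\bar s})=\int_{\mathcal{X}_s}f\,c(\phi)(\omega_\phi^X)^n/n!$, the remaining term is rewritten as $\pi_*\!\bigl(R_{\omega_\phi^X}(dd^c\phi)^{n+1}/(n+1)!\bigr)(\partial_s,\partial_{\bar s})$. Collecting these contributions yields the stated formula. The only delicate point is the first step, the disappearance of the base-dependent twist $\psi_\Omega(s)$ from the fiberwise evolution of $\omega_t^X$; this is precisely what made the local computation in Step 4 of the proof of Theorem \ref{thm:heat eq for c in c-y} come out globally on $\mathcal{X}$, and the same observation is all we need here.
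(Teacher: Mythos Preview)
Your argument is correct and follows essentially the same route as the paper: differentiate the fiber integral defining $\Theta_{\phi_t}(\partial_s,\partial_{\bar s})$, use the flow equation to turn the variation of $(\omega_t^X)^n$ into the scalar-curvature term, and integrate the heat equation of Theorem~\ref{thm:heat eq for c in c-y} over the fiber. You are simply more explicit than the paper about the vanishing of the Laplacian term via Stokes, the use of the volume normalization $V=1$ for the constant $\|A_{CY}\|^2$ contribution, and the push-forward identity rewriting $\int_{\mathcal{X}_s}R_{\omega_\phi^X}c(\phi)(\omega_\phi^X)^n/n!$ as $\pi_*\bigl(R_{\omega_\phi^X}(dd^c\phi)^{n+1}/(n+1)!\bigr)$.
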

\begin{proof}
We may without loss of generality assume that $S$ is embedded in
$\C_{s}.$ Then \[
\frac{\partial}{\partial t}\int_{\chi_{s}}c(\phi)\omega_{\phi}^{n}/n!=\int_{\chi_{s}}\frac{\partial}{\partial t}c(\phi)\omega_{\phi}^{n}/n!+\int c(\phi)\frac{\omega_{\phi}^{n-1}}{(n-1)!}\wedge dd^{c}\frac{\partial}{\partial t}\phi.\]
 Now by the definition of the Kähler-Ricci flow in the Calabi-Yau
case \[
\frac{\omega_{\phi}^{n-1}}{(n-1)!}\wedge dd^{c}\frac{\partial}{\partial t}\phi=\frac{\omega_{\phi}^{n-1}}{(n-1)!}\wedge(-Ric(\omega_{\phi}^{X}))=:-R_{\omega_{\phi}^{X}}\omega_{\phi}^{n}/(n+1)!\]
 where we have used the definition of the (normalized) scalar curvature
$R_{\omega_{\phi}^{X}}$ of the Kähler metric $\omega_{\phi}^{X}$
in the last step. Finally, integrating the formula in the previous
theorem finishes the proof of the corollary.\end{proof}
\begin{rem}
\label{rem:todorovs formel}. Note that if the initial weight $\phi$
for the Kähler-Ricci flow is taken so that $\omega_{\phi}$ restricts
to a Ricci flat metric on all fibers of $\mathcal{X},$ then $\phi$
is stationary for the Kähler-Ricci flow and hence the previous corollary
(and the proof of the previous theorem) shows that \[
\int_{\chi_{s}}|A_{\phi}(s)|^{2}\omega_{\phi}^{n}/n=\frac{\partial^{2}\psi_{\Omega}}{\partial s\partial\bar{s}},\]
i.e. $\omega_{WP}=d_{s}d_{s}^{c}i^{n^{2}}\log\int_{\mathcal{X}_{s}}\Omega\wedge\bar{\Omega}.$
Since, by Proposition \ref{pro:horis vf give harmon} below $A_{\phi}(s)$
is harmonic on each fiber $\mathcal{X}_{s}$ with respect to the Ricci
flat restriction $\omega_{\phi}$ this implies the equivalence between
the formulae \ref{eq:def of w-p}\ref{eq:wp as curv}. 
\end{rem}

\subsection{\label{sec:Quantization:-The-Bergman L}Quantization: The Bergman
iteration on $\mathcal{H}_{L}$}

In this section we will specialize and develop the general results
in section \ref{sec:Quantization:-The-Bergman general} to the present
setting where we have fixed a family of probability measure $\mu_{s}$
(independent of $\phi)$ on the fibers $\chi_{s}.$

\subsubsection{Convergence and positivity of the Bergman iteration at a fixed level
$k.$}

The following monotonicity properties were shown by Donaldson \cite{do3}
in the present setting. 
\begin{lem}
\label{lem:monotone along bergman cy}The functionals $-I_{\mu}$
and $\mathcal{L}^{(k)}$ are both increasing along the \emph{Bergman
iteration on $\mathcal{H}_{L}$ with respect to $\mu.$ Moreover,
they are }strictly \emph{increasing at $\phi_{m}^{(k)}$ unless $\phi_{m}^{(k)}$
is stationary.}\end{lem}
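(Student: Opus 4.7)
The plan is to deduce this lemma as a direct corollary of the general monotonicity result, Lemma \ref{lem:monotone along bergman general}, by exploiting the special feature of the Calabi-Yau setting that $\mu$ does not depend on $\phi$. First I would check the hypotheses of that general lemma in the present setting. The measure $\mu$ has been assumed throughout this section to be a probability measure, so the normalization hypothesis holds. Moreover, because $\mu_\phi = \mu$ is fixed, the primitive functional reduces to
\[
I_\mu(\phi) = \int_X (\phi - \phi_0)\,\mu,
\]
which is manifestly affine, and in particular concave, on the affine space $\mathcal{H}_L$. Hence both structural assumptions entering the two bullets of Lemma \ref{lem:monotone along bergman general} are verified.

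With this observation in hand, the first bullet of the general lemma immediately yields that $\mathcal{L}^{(k)}$ is increasing along the Bergman iteration $\phi_m^{(k)}$, with strict monotonicity away from balanced weights (equivalently, strict monotonicity of $-\log\det H_m^{(k)}$ along the Donaldson iteration in $\mathcal{H}^{(k)}$). The second bullet, applied to the concave $I_\mu$, yields that $I_\mu$ is decreasing along the iteration, i.e.\ $-I_\mu$ is increasing, again strictly unless $\phi_m^{(k)}$ is stationary.

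I would also note, as a simplifying remark, that in the Calabi-Yau setting the first inequality in the proof of the second bullet of the general lemma is in fact an equality: since $I_\mu$ is affine one has
\[
I_\mu(\phi_{m+1}^{(k)}) - I_\mu(\phi_m^{(k)}) \;=\; \int_X (\phi_{m+1}^{(k)} - \phi_m^{(k)})\,\mu \;=\; \frac{1}{k}\int_X \log \rho^{(k)}(\phi_m^{(k)})\,\mu,
\]
so that the monotonicity of $-I_\mu$ follows from a single application of Jensen's inequality to the concave function $\log t$ against the probability measure $\mu$ (recalling that $\rho^{(k)}(\phi)\mu$ is also a probability measure), and the strict monotonicity statement follows from the strict concavity of $\log$. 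There is essentially no independent obstacle to surmount: the content of this lemma is contained in the general lemma, and the only thing to observe is the linearity of $I_\mu$ when $\mu$ is fixed.
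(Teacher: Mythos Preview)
Your proof is correct and follows exactly the same approach as the paper: observe that $I_\mu$ is affine (hence concave) when $\mu$ is fixed, and then invoke Lemma~\ref{lem:monotone along bergman general}. Your additional remark that the concavity inequality becomes an equality here is accurate but not needed.
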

\begin{proof}
Since $I_{\mu}$ is affine and in particular concave on the affine
space of all smooth weights the lemma follows immediately from Lemma
\ref{lem:monotone along bergman general}. 
\end{proof}
We can now prove the convergence of the Bergman iteration at a fixed
level $k$ in the present setting.
\begin{thm}
\label{thm:conv of bergman iter at level k }Let $L\rightarrow X$
be an ample line bundle and $\mu$ a fixed volume form on $X$ giving
unit volume to $X.$ Assume given a smooth initial weight $\phi_{0}.$
For any given positive integer $k$ the following holds: In the large
time limit, i.e. when $m\rightarrow\infty,$ the weights $\phi_{m}^{(k)}\rightarrow\phi_{\infty}^{(k)}$
in the $\mathcal{C}^{\infty}-$topology on $X.$ Moreover, in the
relative setting the convergence is locally uniform with respect to
the base parameter $s.$\end{thm}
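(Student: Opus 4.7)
The plan is to deduce this theorem as a direct specialization of Proposition \ref{pro:conv of bergman iter at level k general}. That proposition requires four hypotheses: that $\mu_\phi$ be normalized, that $I_\mu$ be decreasing along the Bergman iteration, that the functional $\mathcal{F}_\mu^{(k)}$ (in the sign convention used in the proof of Lemma \ref{lem:compactness of kr flow}) be coercive on $\mathcal{H}^{(k)}/\R$, and that the balanced metric at level $k$ be unique modulo scaling. Once these four points are verified the general statement delivers the fiberwise $\mathcal{C}^\infty$-convergence together with the higher-order smoothing estimate \ref{eq:smoothing prop of bergman it}.

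The first two hypotheses are essentially free: normalization is part of the standing assumption on $\mu$, and since $\mu$ does not depend on $\phi$ the functional $I_\mu(\phi)=\int_X(\phi-\phi_0)\,\mu$ is affine in $\phi$ and in particular concave, so the second bullet of Lemma \ref{lem:monotone along bergman general} (restated here as Lemma \ref{lem:monotone along bergman cy}) yields the required monotonicity. The remaining two hypotheses I would obtain from Lemma \ref{lem:geod convex} applied to
\[
-\mathcal{F}_\mu^{(k)}(H)=\tfrac{1}{N_kk}\log\det(H)+I_\mu\circ FS^{(k)}(H).
\]
Along a geodesic $H_{t\lambda}=e^{t\lambda}H_0$ in the symmetric space $\mathcal{H}^{(k)}=GL(N_k,\C)/U(N_k)$ the first summand is linear in $t$, while
\[
I_\mu\circ FS^{(k)}(H_{t\lambda})=\tfrac{1}{k}\int_X\log\Bigl(\tfrac{1}{N_k}\sum_i e^{tk\lambda_i}|f_i|^2\Bigr)\,\mu
\]
is a positive-measure integral of log-sum-exp functions, hence convex in $t$ and strictly convex as soon as the $\lambda_i$ are not all equal, i.e.\ as soon as the geodesic is nontrivial modulo scaling. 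Existence of a balanced metric, which is the only extra input Lemma \ref{lem:geod convex} needs in order to produce coercivity, is already available in the fixed-measure setting (for example from the argument of \cite{do3}, or alternatively by combining the strict convexity above with the exhaustion property of $J^{(k)}$ supplied by Lemma \ref{lem:jk is exhaust in bergman} to obtain a minimizer directly); uniqueness modulo scaling then follows automatically from the same strict convexity.

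This completes the absolute case. For the relative case one would run the same argument pointwise in $s\in S$; the one point requiring some care is that every bound entering the proof is locally uniform in $s$. The reference weight $\phi_0$, the family $\mu_s$, and the fixed sections used to define $FS^{(k)}$ and $Hilb^{(k)}$ all depend smoothly on $s$; hence the monotonicities of $\mathcal{L}^{(k)}$ and of $-I_\mu\circ FS^{(k)}$ combined with the uniform boundedness of the initial data on compact subsets of $S$ give a locally uniform $L^\infty$-bound on the iterates, while the quasi-explicit determinantal integral for $\rho^{(k)}$ used at the end of the proof of Proposition \ref{pro:conv of bergman iter at level k general} shows that the constants $A_\alpha$ and the partition functions $Z_{\phi_m}$ controlling the smoothing estimate \ref{eq:smoothing prop of bergman it} depend continuously on $s$ as well. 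This is the main technical step, and it promotes the fiberwise $\mathcal{C}^\infty$-convergence to convergence that is locally uniform on $\mathcal{X}$.
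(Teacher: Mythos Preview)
Your argument is correct and follows the same overall strategy as the paper: verify the hypotheses of Proposition \ref{pro:conv of bergman iter at level k general} and then invoke it. The difference lies in how you establish coercivity of $-\mathcal{F}_\mu^{(k)}$ and uniqueness of the balanced metric. The paper dispatches both in one line: since $\mu$ is a fixed smooth probability measure, $-\mathcal{F}_\mu^{(k)}$ is literally $J^{(k)}$ once one takes $\mu$ itself as the reference measure (which the remark after Lemma \ref{lem:jk is exhaust in bergman} permits), so coercivity holds tautologically with $\delta=1$; uniqueness is then simply cited from \cite{bbgz}. You instead establish geodesic strict convexity of $-\mathcal{F}_\mu^{(k)}$ via the log-sum-exp computation and appeal to Lemma \ref{lem:geod convex}, which gives both uniqueness and coercivity but at the price of needing existence of a balanced metric as a separate input. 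Your route is more self-contained (it avoids the external citation for uniqueness and actually reproves it), whereas the paper's is shorter and sidesteps the existence question entirely. Either way the relative statement follows exactly as you indicate, by tracking that all constants in the monotonicity and smoothing estimates depend continuously on $s$.
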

\begin{proof}
By the previous lemma $-I_{\mu}$ is increasing and by definition
$-\mathcal{F}_{\mu}^{(k)}$ is coercive. Moreover, as shown in \cite{bbgz}
balanced weights are unique modulo scaling and hence all the convergence
criteria in Proposition \ref{pro:conv of bergman iter at level k general}
are satisfied. 
\end{proof}

\subsubsection{\label{sub:Conservation-of-positivity}Conservation of positivity }

Recall that, given a relatively ample line bundle $\mathcal{L}$ over
a fibration $\pi:\,\mathcal{X}\rightarrow S$ as above the corresponding
direct image bundle $\pi_{*}(\mathcal{L}+K_{\mathcal{X}/S})\rightarrow S$
is the vector bundle such that the fiber over $s$ is naturally identified
with the space $H^{0}(X,L+K_{X})$ of all holomorphic $n-$forms $f$
on $X:=\mathcal{X}_{s}$ with values in $L:=\mathcal{L}_{X}$ (as
is well-known this is indeed a vector bundle, which is shown using
vanishing theorems). Moreover, any given weight $\phi$ on $\mathcal{L}$
induces an Hermitian metric on $\pi_{*}(k\mathcal{L}+K_{\mathcal{X}/S})$
whose fiber-wise restriction will be denoted by $Hilb_{L+K_{X}}(\phi):$
\[
Hilb_{L+K_{X}}(\phi)(f,f):=i^{n^{2}}\int_{X}f\wedge\overline{f}e^{-\phi}\]
The point is that there is no need to specify an integration measure
$\mu$ thanks to the twist by the relative canonical line bundle $K_{\mathcal{X}/S}.$
We will have great use for the following recent results of Berndtsson.
\begin{thm}
\label{thm:(Berndtsson).-Let-}(Berndtsson ) Let $\pi:\mathcal{\, X}\rightarrow S$
be a proper holomorphic submersion and let $\mathcal{L}$ be a relatively
ample line bundle over $\mathcal{X}$ equipped with a smooth weight
with semi-positive curvature. Then
\begin{itemize}
\item \cite{bern0} the curvature of the Hermitian vector bundle over $S$
defined as the direct image bundle $\pi_{*}(\mathcal{L}+K_{\mathcal{X}/S})$
is semi-positive in the sense of Nakano (and in particular in the
sense of Griffiths). 
\item (see Theorem 1.2 in \cite{bern2} and the subsequent discussion) The
vector bundle $\pi_{*}(\mathcal{L}+K_{\mathcal{X}/S})$ has\emph{
strictly} positive curvature in the sense of Griffiths if \emph{either}
the curvature form of $\phi$ is strictly positive over all of $\mathcal{\, X}$\emph{
or} strictly positive along the fibers of $\pi:\mathcal{\, X}\rightarrow S$
and the fibration is infinitesimally non-trivial (i.e. the Kodaira-Spencer
 classes are non-trivial for all $s\in S)$.
\end{itemize}
\end{thm}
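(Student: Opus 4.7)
My plan is to follow the standard approach to direct-image curvature formulas: represent a local holomorphic section $u$ of $\pi_{*}(\mathcal{L}+K_{\mathcal{X}/S})$ as an $\mathcal{L}$-valued $(n,0)$-form on $\mathcal{X}$, choose the representative carefully using the horizontal distribution of the Chern connection of $\phi$, and then compute the curvature of the $L^{2}$-metric by differentiating $\|u\|^{2}_{s}$ twice along holomorphic directions in $S$.

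First I would fix $s_{0}\in S$, a tangent vector $v\in T_{s_{0}}S$ (extended to a holomorphic vector field nearby) and a class $u_{0}\in H^{0}(\mathcal{X}_{s_{0}},(\mathcal{L}+K_{\mathcal{X}/S})_{|s_{0}})$, and extend $u_{0}$ to a local holomorphic section $u$ of the direct image. Identifying $u$ with an $\mathcal{L}$-valued $(n+1,0)$-form on $\pi^{-1}(U)$ after wedging with $ds$, the ambiguity in the extension consists of forms vanishing on fibers; I would pin down the extension by requiring it to be \emph{primitive} with respect to the horizontal lift $W_{\phi}$ of $v$ constructed in Proposition \ref{pro:A is well-def}, i.e.\ the contraction $\delta_{W_{\phi}}u$ vanishes on each fiber. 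This is exactly the $L^{2}$-minimal extension and reduces the curvature computation to a local pointwise identity.

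With such a representative, differentiating $\|u\|^{2}_{s}$ twice and using that $u$ is fiber-wise holomorphic yields, after integration by parts on the fiber, a decomposition into three pieces: a term involving $dd^{c}\phi(W_{\phi},\overline{W_{\phi}})=c(\phi)$ as in \eqref{eq:def of c}, a term quadratic in the Kodaira-Spencer representative $A_{\phi}=\overline{\partial}W_{\phi}$ acting on $u$ along the fiber, and a fiber-wise $\overline{\partial}$-Laplacian term. The Bochner-Kodaira-Nakano identity on $\mathcal{X}_{s}$, together with the relative positivity of $\mathcal{L}+K_{\mathcal{X}/S}$, then gives Griffiths semi-positivity of the Chern curvature of $\pi_{*}(\mathcal{L}+K_{\mathcal{X}/S})$. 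For Nakano semi-positivity one runs the same computation with a section of $T_{S}\otimes\pi_{*}(\mathcal{L}+K_{\mathcal{X}/S})$, treating the mixed terms symmetrically and exploiting that the semi-positivity of $dd^{c}\phi$ on the whole of $\mathcal{X}$ (not only on fibers) makes the quadratic form in $W_{\phi}$ non-negative regardless of the multi-index structure.

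For the strict positivity statement, I would analyze when equality is attained. Tracking through the Bochner-Kodaira-Nakano inequality, equality forces simultaneously (a) the pointwise vanishing of $c(\phi)\cdot |u|^{2}$ on the fiber and (b) the vanishing of $A_{\phi}\wedge u$. If the curvature of $\phi$ is strictly positive on all of $\mathcal{X}$, then $c(\phi)>0$ everywhere and condition (a) forces $u\equiv 0$, giving strict Griffiths positivity. In the alternative scenario, where $dd^{c}\phi$ is merely strictly positive along fibers but the Kodaira-Spencer class is non-zero, I would invoke the refined analysis of \cite{bern2}: condition (b) would say that contraction of the non-trivial harmonic representative $A_{\phi}$ with a nowhere-vanishing holomorphic $(n,0)$-form twisted by $\mathcal{L}$ is identically zero, which by a fiber-wise duality/Bochner argument (using the strict positivity of $\omega_{\phi}^{X}$) is impossible unless the Kodaira-Spencer class itself vanishes. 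The main obstacle, and the technical heart of both \cite{bern0,bern2}, is the global construction and regularity of the primitive extension---equivalently, solving the $\overline{\partial}$-equation fiber-wise with uniform $L^{2}$-control via Hörmander-type estimates---together with the delicate equality-case analysis needed for strict positivity.
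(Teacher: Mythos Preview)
The paper does not prove this theorem at all: it is stated as a citation of Berndtsson's results in \cite{bern0} and \cite{bern2}, and the paper only uses it as a black box (immediately passing to Corollary~\ref{cor:bernd}). So there is no ``paper's own proof'' to compare your proposal against.

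That said, your sketch is a reasonable high-level summary of the strategy in \cite{bern0,bern2}: represent sections of the direct image as $\mathcal{L}$-valued $(n,0)$-forms, use the horizontal lift $W_{\phi}$ to fix the extension, and compute the second variation of the $L^{2}$-norm to obtain a curvature formula with a $c(\phi)$ term, a Kodaira--Spencer term, and a Laplacian term. One point worth flagging: your equality-case analysis for strict positivity in the ``fiber-positive plus non-trivial Kodaira--Spencer'' scenario is too breezy. The argument in \cite{bern2} is genuinely subtle---it is not just that $A_{\phi}\wedge u=0$ forces the Kodaira--Spencer class to vanish, but rather a more delicate analysis involving the $\bar\partial$-equation and the structure of the equality case in the H\"ormander estimate. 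If you intend this as an actual proof rather than a pointer to the literature, that step would need substantial expansion.
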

We will only use the following simple consequence of Berndtsson's
theorem (compare \cite{bern0,b-p}):
\begin{cor}
\label{cor:bernd}Under the assumptions in the first point of the
previous theorem we have\begin{equation}
dd^{c}(FS^{(k)}\circ Hilb_{k\mathcal{L}+K_{\mathcal{X}/S}})(\phi)\geq0\label{eq:pos from bernd thm}\end{equation}
and the inequality is strict under the assumptions in the second point
of the theorem.\end{cor}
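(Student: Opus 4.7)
\emph{Proof plan.} The aim is to interpret $\Psi := (FS^{(k)}\circ Hilb_{k\mathcal{L}+K_{\mathcal{X}/S}})(\phi)$, viewed as a weight on the line bundle $k\mathcal{L}+K_{\mathcal{X}/S}$ over $\mathcal{X}$, as the pull-back of the Fubini--Study weight on $\mathcal{O}_{\mathbb{P}(E^*)}(1)$ via the relative Kodaira-type map $x\mapsto ev_x$ sending $x\in\mathcal{X}_s$ to the evaluation functional on the fiber $E_s:=H^0(\mathcal{X}_s,k\mathcal{L}_s+K_{\mathcal{X}_s})$, and to deduce its positivity from Berndtsson's theorem applied to $E:=\pi_*(k\mathcal{L}+K_{\mathcal{X}/S})$.

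First, we apply the first bullet of Theorem \ref{thm:(Berndtsson).-Let-} to $k\mathcal{L}$ equipped with the semi-positively curved weight $k\phi$; this shows that the Hermitian vector bundle $(E,H)$ with $H:=Hilb_{k\mathcal{L}+K_{\mathcal{X}/S}}(\phi)$ has Nakano, and hence Griffiths, semi-positive curvature over $S$. We shall invoke the classical characterisation: Griffiths semi-positivity of $(E,H)$ is equivalent to plurisubharmonicity of $\log\|u\|_{H^*}^2$ on $S$ for every local holomorphic section $u$ of the dual bundle $E^*$.

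Second, fix $x\in\mathcal{X}$ and a nowhere-vanishing local holomorphic trivialisation $\sigma$ of $k\mathcal{L}+K_{\mathcal{X}/S}$ on an open neighbourhood $\mathcal{U}$ of $x$. The evaluation $ev_y:E_{\pi(y)}\to(k\mathcal{L}+K_{\mathcal{X}/S})_y$ defines a holomorphic section of $\pi^*E^*\otimes(k\mathcal{L}+K_{\mathcal{X}/S})$ over $\mathcal{X}$, and dividing by $\sigma$ yields a local holomorphic section $u:=ev/\sigma$ of $\pi^*E^*$ on $\mathcal{U}$. Expanding $ev_y=\sum_i f_i(y)\otimes e_i^*$ in an $H$-orthonormal basis $(f_i)$ of $E_s$ with dual frame $(e_i^*)$, one obtains
\[
\log\|u\|_{\pi^*H^*}^2\;=\;\log\Big(\textstyle\sum_i|f_i|^2\Big)-\log|\sigma|^2\;=\;\Psi+\log N_k-\log|\sigma|^2\qquad\text{on }\mathcal{U}.
\]

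The main step (and the only place where something non-cosmetic happens) is to upgrade plurisubharmonicity of $\log\|\cdot\|_{H^*}^2$ from the base $S$ to the total space $\mathcal{X}$. We do this via the disk-restriction definition of psh functions: for any holomorphic disk $\Gamma:\Delta\to\mathcal{U}$, the composition $u\circ\Gamma$ is a holomorphic section of $(\pi\circ\Gamma)^*E^*$ over $\Delta$, and the one-variable form of the Griffiths criterion applied to $\pi\circ\Gamma:\Delta\to S$ yields subharmonicity of $\log\|u\circ\Gamma\|_{H^*}^2$ on $\Delta$. Testing against all such disks shows that $\log\|u\|_{\pi^*H^*}^2$ is psh on $\mathcal{U}$. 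Since $\sigma$ is holomorphic and non-vanishing, $dd^c\log|\sigma|^2=0$ on $\mathcal{U}$, so applying $dd^c$ to the displayed identity gives $dd^c\Psi\geq0$ on $\mathcal{U}$. As $x$ is arbitrary, this proves \eqref{eq:pos from bernd thm}. The strict version follows by identical reasoning, replacing the first bullet of Theorem \ref{thm:(Berndtsson).-Let-} by the second to obtain strict Griffiths positivity of $(E,H)$ and hence strict subharmonicity of $\log\|u\circ\Gamma\|_{H^*}^2$ along every non-constant disk $\Gamma$.
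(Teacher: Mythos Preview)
Your proof is correct and follows essentially the same approach as the paper's: both identify the weight $\Psi$ with the log of the dual norm of the evaluation section $\Lambda\in\pi^*E^*\otimes\mathcal{F}$ (after locally trivialising $\mathcal{F}=k\mathcal{L}+K_{\mathcal{X}/S}$), restrict to holomorphic disks in $\mathcal{X}$, and invoke the Griffiths criterion that (strict) positivity of $E$ is equivalent to (strict) plurisubharmonicity of $\log\|\cdot\|_{H^*}^2$ for holomorphic sections of the dual. Your presentation is slightly more explicit in spelling out the identity $\log\|u\|_{\pi^*H^*}^2=\Psi+\log N_k-\log|\sigma|^2$ and the disk-restriction step, but the content is the same.
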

\begin{proof}
We will denote the line bundle $k\mathcal{L}+K_{\mathcal{X}/S}$ over
$\mathcal{X}$ by $\mathcal{F}$ and the vector bundle $\pi_{*}(\mathcal{F})$
over $S$ by $E$ (and its dual by $E^{*}).$ First note that the
weight on $\mathcal{F}$ that we are interested in may be written
as \begin{equation}
(FS\circ Hilb_{\mathcal{F}})(s,x_{s})=\log\sup_{f_{s}\in E_{s}}\frac{|f_{s}(x_{s})|^{2}}{\left\Vert f(x_{s})\right\Vert ^{2}}=\log|\Lambda_{(s,x_{s})}|^{2}\label{eq:pf of cor bernd}\end{equation}
 where $\Lambda_{(s,x_{s})}$ is the element in $E_{s}^{*}\otimes$
$\mathcal{F}_{s}$ defined by \[
(\Lambda_{(s,x_{s})}f_{s}):=f_{s}(x_{s})\]
Let now $t\mapsto(s_{t},x_{s_{t}})$ be a local holomorphic curve
in $\mathcal{X}$ with $t\in\Delta$ (the unit-disc). Trivializing
$\mathcal{F}$ in a neighborhood of the previous curve we may pull
back $\Lambda_{(s,x_{s})}$ to a holomorphic section $\Lambda_{t}$
of $E^{*}$ over the unit-disc and identify the weight defined by
\ref{eq:pf of cor bernd} with a function $\log|\Lambda_{t}|^{2}$
on $\Delta.$ We have to prove that this latter function is (strictly)
psh. But this follows from the following well-known fact: a vector
bundle $E\rightarrow\Delta$ is (strictly) positive in the sense of
Griffiths iff $\log(\left\Vert \Lambda_{t}\right\Vert ^{2})$ is (strictly)
subharmonic on $\Delta$ where $\Lambda$ is any non-trivial holomorphic
section of the dual vector bundle $E^{*}.$ For example, to get the
required (strict) subharmonicity one just notes that, after a standard
computation, \[
\frac{\partial^{2}\log(\left\Vert \Lambda_{t}\right\Vert ^{2})}{\partial t\partial\bar{t}}_{t=0}\geq-\frac{\Theta_{E^{*}}(\Lambda_{0},\Lambda_{0})}{\left\Vert \Lambda_{0}\right\Vert ^{2}},\]
 where $\Theta_{E^{*}}$ at $t$ is the Hermitian endomorphism of
$E_{t}^{*}$ representing the curvature of $E.$ By the previous theorem
$\Theta_{E}$ is (strictly) positive which is equivalent to $\Theta_{E*}$
being (strictly) negative and the corollary hence follows from the
previous inequality.
\end{proof}
We next obtain  a {}``quantized'' version of Corollary \ref{cor:conserv of pos along k-r}.
\begin{cor}
\label{cor: conserv of pos. along bergman it c-y}Let $\pi:\mathcal{\, X}\rightarrow S$
be a proper holomorphic submersion with Calabi-Yau fibers and let
$\mathcal{L}$ be a relatively ample line bundle over $\mathcal{X}.$ 
\begin{itemize}
\item When $\pi$ is holomorphically trivial the relative Bergman iteration
preserves semi- positivity of the curvature of $\phi.$
\item In the case of a general submersion with Calabi-Yau fibers , \[
dd^{c}\phi_{(m)}^{(k)}\geq-\frac{m}{k}\omega_{WP}\]
for all $m.$
\end{itemize}
\end{cor}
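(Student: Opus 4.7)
The proof combines Berndtsson's Nakano semipositivity with the Calabi--Yau structure of the fibers. Because each fiber is Calabi--Yau, the evaluation map is an isomorphism $\pi^{*}\pi_{*}K_{\mathcal{X}/S}\cong K_{\mathcal{X}/S}$ on $\mathcal{X}$, so the projection formula gives
\[
\pi_{*}(k\mathcal{L}+K_{\mathcal{X}/S})\;\cong\;\pi_{*}(k\mathcal{L})\otimes\pi_{*}K_{\mathcal{X}/S}
\]
as holomorphic vector bundles on $S$. Equip $\pi_{*}K_{\mathcal{X}/S}$ with its natural $L^{2}$ metric, whose curvature is $\omega_{WP}$ by definition. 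A short computation using $\mu_{s}=c_{n}\Omega_{s}\wedge\bar{\Omega}_{s}$ shows that, under the above isomorphism, the natural Hilbert metric $Hilb_{k\mathcal{L}+K_{\mathcal{X}/S}}(\phi)$ on the left corresponds (up to a universal constant scalar) to $Hilb^{(k)}_{\mu}(\phi)\otimes h_{L^{2}}$ on the right, and consequently as Hermitian vector bundles on $S$,
\[
\Theta\bigl(\pi_{*}(k\mathcal{L}),Hilb^{(k)}_{\mu}(\phi)\bigr)\;=\;\Theta\bigl(\pi_{*}(k\mathcal{L}+K_{\mathcal{X}/S})\bigr)\;-\;\omega_{WP}\cdot\mathrm{Id}.
\]
When $dd^{c}\phi\geq 0$ on $\mathcal{X}$, Berndtsson's theorem makes the first term on the right Nakano (so Griffiths) semipositive, whence the left-hand side is Griffiths $\geq-\omega_{WP}\cdot\mathrm{Id}$. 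The $\log|\Lambda|^{2}$ computation carried out in the proof of Corollary~\ref{cor:bernd}, applied now with $E=\pi_{*}(k\mathcal{L})$ and the evaluation section $\Lambda\in E^{*}\otimes k\mathcal{L}$, upgrades this Griffiths bound to the pointwise inequality
\[
dd^{c}\phi_{+}^{(k)}\;\geq\;-\tfrac{1}{k}\omega_{WP}\quad\text{on }\mathcal{X},\qquad\phi_{+}^{(k)}:=\tfrac{1}{k}(FS^{(k)}\circ Hilb^{(k)}_{\mu})(\phi).
\]

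For part (1), a holomorphically trivial fibration has vanishing Kodaira--Spencer classes and hence $\omega_{WP}\equiv 0$, so the single-step bound reads $dd^{c}\phi_{m+1}^{(k)}\geq 0$ whenever $dd^{c}\phi_{m}^{(k)}\geq 0$, and the claim follows by induction from a semipositive $\phi_{0}$.

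For part (2) I induct on $m$, the base case $m=0$ being the hypothesis. Assuming $dd^{c}\phi_{m}^{(k)}\geq-\tfrac{m}{k}\omega_{WP}$, work locally on $S$ and choose a smooth $u$ with $dd^{c}u=\omega_{WP}$; set $\tilde\phi_{m}:=\phi_{m}^{(k)}+\tfrac{m}{k}\pi^{*}u$, which satisfies $dd^{c}\tilde\phi_{m}\geq 0$ on $\mathcal{X}$. Since $\pi^{*}u$ is fiberwise constant, $Hilb^{(k)}_{\mu}(\tilde\phi_{m})=e^{-m\pi^{*}u(s)}Hilb^{(k)}_{\mu}(\phi_{m}^{(k)})$ on each fiber, and rescaling orthonormal bases by $e^{m\pi^{*}u/2}$ shows that the Bergman iterate of $\tilde\phi_{m}$ equals $\phi_{m+1}^{(k)}+\tfrac{m}{k}\pi^{*}u$. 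Applying the single-step bound of the first paragraph to $\tilde\phi_{m}$ (legitimate since $dd^{c}\tilde\phi_{m}\geq 0$) therefore yields
\[
dd^{c}\bigl(\phi_{m+1}^{(k)}+\tfrac{m}{k}\pi^{*}u\bigr)\;\geq\;-\tfrac{1}{k}\omega_{WP},
\]
i.e.\ $dd^{c}\phi_{m+1}^{(k)}\geq-\tfrac{m+1}{k}\omega_{WP}$, completing the inductive step. Being a local current inequality, it patches to the desired global bound on $\mathcal{X}$.

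The delicate step is the curvature identification in the first paragraph: one must verify that the two natural Hermitian metrics on $\pi_{*}(k\mathcal{L})$ provided by the projection formula differ precisely by tensoring with the $L^{2}$ metric on $\pi_{*}K_{\mathcal{X}/S}$, so that the correction $-\omega_{WP}\cdot\mathrm{Id}$ enters with the right sign and coefficient. Once this $K_{\mathcal{X}/S}$-twist is correctly set up, both parts reduce to short inductions exploiting that, for a fiberwise-constant potential, the Bergman iteration interacts with the additive shift transparently.
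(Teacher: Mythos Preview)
Your proof is correct and follows essentially the same approach as the paper: both use the Calabi--Yau structure to relate $Hilb^{(k)}_\mu$ on $\pi_*(k\mathcal{L})$ to the adjoint metric on $\pi_*(k\mathcal{L}+K_{\mathcal{X}/S})$ (you via the projection formula and the curvature identity $\Theta(\pi_*(k\mathcal{L}))=\Theta(\pi_*(k\mathcal{L}+K_{\mathcal{X}/S}))-\omega_{WP}\cdot\mathrm{Id}$, the paper via the explicit fiberwise isomorphism $f\mapsto f\otimes\Omega$ and the scalar factor $e^{\psi_\Omega}$), then apply Berndtsson's theorem to obtain the single-step bound $dd^c(FS^{(k)}\circ Hilb^{(k)})(\phi)\geq -\tfrac{1}{k}\omega_{WP}$ whenever $dd^c\phi\geq 0$, and iterate. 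Your explicit induction using a local potential $u$ for $\omega_{WP}$ and the shift $\phi_m^{(k)}+\tfrac{m}{k}\pi^*u$ is simply a more detailed version of the paper's terse ``replacing $\phi$ with $FS^{(k)}\circ Hilb^{(k)}-\psi_\Omega(s)$ and iterating.''
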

\begin{proof}
For simplicity first consider the case of a trivial fibration. Fix
a holomorphic $n-$form $\Omega$ on $X:=\mathcal{X}_{0}$ trivializing
$K_{X}.$ Under the assumption that $\mathcal{X}\rightarrow S$ is
holomorphically trivial $\Omega$ extends to a holomorphic $n-$form
on all of $\mathcal{X}$ such that $\psi_{\Omega}:=\log\int_{\mathcal{X}_{s}}i^{n^{2}}\Omega\wedge\bar{\Omega}$
is independent of $s.$ In this notation \[
Hilb^{(k)}((\phi(s,\cdot))(f,f):=\int_{\mathcal{X}_{s}}|f|^{2}e^{-(k\phi(s,\cdot)-\psi_{\Omega}(s))}i^{n^{2}}\Omega\wedge\bar{\Omega}\]
Now consider the fiber-wise isomorphism \[
j:\, H^{0}(X,kL)\rightarrow H^{0}(X,kL+K_{X}),\,\,\, j(f)=f\otimes\Omega,\]
which clearly satisfies $Hilb^{(k)}((\phi(s,\cdot)=e^{\psi_{\Omega}}j^{*}Hilb_{L+K_{X}}(\phi(s,\cdot)).$
This means that, up to a multiplicative constant independent of $s,$
the map $j$ is an \emph{isometry }when $H^{0}(X_{s},kL+K_{X_{s}})=H^{0}(\mathcal{X},k\mathcal{L}+K_{\mathcal{X}/S})_{\mathcal{X}_{s}}$
is equipped with its natural Hermitian product. In particular, by
\ref{eq:pos from bernd thm}, \[
dd^{c}\phi\geq0\implies dd^{c}(FS^{(k)}\circ Hilb^{(k)})(\phi)\geq0\]
Iterating hence proves the first point in the statement of the corollary.
Finally, for a general submersion the same argument gives, but now
taking into account the fact that $\psi_{\Omega}$ depends on $s$
that \[
dd^{c}\phi\geq0\implies dd^{c}(FS^{(k)}\circ Hilb^{(k)})(\phi)\geq-dd^{c}\psi_{\Omega}(s)/k:=-\omega_{FS}(s)/k\]
using formula \ref{eq:wp as curv} in the last equality. Replacing
$\phi$ with $FS^{(k)}\circ Hilb^{(k)}-\psi_{\Omega}(s)$ and iterating
hence finishes the proof of the corollary.
\end{proof}

\subsubsection{Convergence towards the Kähler-Ricci flow}

The following very simple proposition will turn out to be very useful:
\begin{prop}
\label{pro:bergman iter decr sup cy}The following monotonicity holds
for the Bergman iteration at level $k$ (with respect to $\mu).$
Assume that $\phi_{m}^{(k)}\leq\psi_{m}^{(k)}.$ Then $\phi_{m+1}^{(k)}\leq\psi_{m+1}^{(k)}.$
In particular, the Bergman iteration decreases the distance in $\mathcal{H}_{L}$
defined with respect to the sup-norm: $d(\phi,\psi):=\sup_{X}|\phi-\psi|.$ \end{prop}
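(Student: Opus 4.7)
The plan is to decompose the Bergman iteration as $T^{(k)} := FS^{(k)} \circ Hilb^{(k)}$ and show that each of the two factors is order-\emph{reversing}, so that their composition is order-preserving. The sup-norm contraction then follows formally by combining monotonicity with scaling equivariance $T^{(k)}(\phi+c) = T^{(k)}(\phi)+c$ (which is immediate from $Hilb^{(k)}(\phi+c) = e^{-kc}Hilb^{(k)}(\phi)$ and $FS^{(k)}(e^{-kc}H)=FS^{(k)}(H)+c$).

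\emph{Step 1 (monotonicity of $Hilb^{(k)}$).} Since in the Calabi--Yau setting the measure $\mu$ does not depend on $\phi$, for any $f\in H^{0}(X,kL)$ one has
\[
Hilb^{(k)}(\phi)(f,f) \;=\; \int_{X} |f|^{2}\, e^{-k\phi}\, d\mu,
\]
which is pointwise decreasing in $\phi$. Hence $\phi\leq\psi$ implies $Hilb^{(k)}(\phi)\geq Hilb^{(k)}(\psi)$ as Hermitian forms on $H^{0}(X,kL)$.

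\emph{Step 2 (anti-monotonicity of $FS^{(k)}$).} Using the standard extremal description of the Bergman kernel, one has the identity
\[
e^{k\,FS^{(k)}(H)(x)} \;=\; \frac{1}{N_{k}}\,\sup_{0\neq f\in H^{0}(X,kL)}\frac{|f(x)|^{2}}{H(f,f)},
\]
obtained by diagonalizing $H$ against any reference metric. If $H\geq H'$ as Hermitian forms, the sup is taken over a smaller class (after rescaling), so $FS^{(k)}(H)\leq FS^{(k)}(H')$ pointwise. Composing with Step~1 gives that $T^{(k)}$ is order-preserving on $\mathcal{H}_{L}$, which proves the first statement by induction on $m$.

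\emph{Step 3 (contraction of the sup-distance).} Set $d:=\sup_{X}|\phi-\psi|$; then $\psi - d \leq \phi \leq \psi + d$. Applying $T^{(k)}$, using the order-preserving property from Step~2 and the scaling equivariance noted above, yields
\[
T^{(k)}(\psi) - d \;\leq\; T^{(k)}(\phi) \;\leq\; T^{(k)}(\psi) + d,
\]
so $\sup_{X}|T^{(k)}\phi - T^{(k)}\psi|\leq d(\phi,\psi)$, which is the claimed contractivity.

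There is no genuine obstacle here: both Steps~1 and~2 are elementary monotonicity properties and the rest is formal. The only point requiring minor care is the extremal characterization used in Step~2, which must be stated in an invariant (basis-free) way in order for the comparison of $H$ and $H'$ to be unambiguous.
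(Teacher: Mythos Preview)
Your proof is correct and follows essentially the same approach as the paper: both use the extremal characterization of the Bergman kernel to establish monotonicity of the iteration in $\phi$, and then combine this with scaling equivariance to deduce the sup-norm contraction. Your decomposition into two order-reversing factors is a slightly more structured presentation of the same argument the paper gives in one step.
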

\begin{proof}
By definition we have \[
\phi_{m+1}^{(k)}=\phi_{m}^{(k)}+\frac{1}{k}\log\rho^{(k)}(k\phi_{m}^{(k)})=\frac{1}{N_{k}}\sum_{i}|f_{i}|^{2}.\]
By a well-known identity for Bergman kernels \[
\sum_{i=1}|f_{i}|^{2}(x)=\sup_{f\in H^{0}(X,kL)}(|f(x)|^{2}/\int_{X}|f|^{2}e^{-k\phi_{m}}d\mu).\]
But this latter expression is clearly monotone in $\phi_{m}$ proving
the first statement of the proposition. As for the last statement
just let $C:=\sup_{X}|\phi_{m}^{(k)}-\psi_{m}^{(k)}|$ so that \[
\phi_{m}^{(k)}\leq\psi_{m}^{(k)}+C,\,\,\,\,\,\psi_{m}^{(k)}\leq\phi_{m}^{(k)}+C.\]
 Applying the first statement of the proposition hence finishes the
proof.\end{proof}
\begin{rem}
The previous proposition can be seen as a {}``quantum'' analog of
the corresponding result for the Kähler-Ricci flow \ref{eq:k-r flow of weights in intro},
which follows directly from the maximum principle for the Monge-Ampère
operator and its parabolic analogue. 
\end{rem}
Now we can prove the following Theorem which is one of the main results
in this paper. 
\begin{thm}
\label{thm:conv of bergman iter to ricci cy}Let $L\rightarrow X$
be an ample line bundle and $\mu$ a volume form on $X$ giving unit
volume to $X.$ Fix a smooth weight $\phi_{0}$ on $L$, whose curvature
form is fiber-wise strictly positive and consider the corresponding
Bergman iteration $\phi_{m}^{(k)}$ at level $k$ and discrete time
$m,$ as well as the Kähler Ricci flow $\phi_{t}$ - both defined
with respect to $\mu.$ Then there is a constant $C$ such that \[
\sup_{X}|\phi_{m}^{(k)}-\phi_{m/k}|\leq Cm/k^{2}\]
 In particular, if $m_{k}$ is a sequence such that $m_{k}/k\rightarrow t,$
then \[
\phi_{m_{k}}^{(k)}\rightarrow\phi_{t}\]
 uniformly on $X.$ Moreover, in the relative setting $C$ is locally
bounded in the base parameter $s$ if $\mu$ depends smoothly on $s.$\end{thm}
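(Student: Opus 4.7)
The plan is to treat the Bergman iteration as an Euler-type discretization of the K\"ahler--Ricci flow with time-step $1/k$, and to iterate a one-step error estimate by appealing to the sup-norm contraction property of the Bergman step in Proposition \ref{pro:bergman iter decr sup cy}. Write $T^{(k)}:=FS^{(k)}\circ Hilb^{(k)}$ so that $\phi_{m+1}^{(k)}=T^{(k)}(\phi_m^{(k)})$, and observe first that $T^{(k)}$ commutes with additive constants (since $\rho^{(k)}(\phi+c)=\rho^{(k)}(\phi)$ as one checks directly from the definition of $Hilb^{(k)}$). Combined with the monotonicity in Proposition \ref{pro:bergman iter decr sup cy}, this shows that $T^{(k)}$ is $1$-Lipschitz in the sup-norm:
\[
\|T^{(k)}(u)-T^{(k)}(v)\|_{L^\infty(X)}\leq \|u-v\|_{L^\infty(X)}.
\]

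The main step is to estimate the one-step defect of the sampled flow $v_m:=\phi_{m/k}$ relative to $T^{(k)}$. On the one hand, by Taylor expansion in $t$, using the stability assumption in section \ref{sub:The-relative-K=0000E4hler-Ricci general} to bound $\partial_t^2\phi_t$ uniformly on $X\times[0,M]$,
\[
v_{m+1}-v_m \;=\; \tfrac{1}{k}\,\partial_t\phi_t\big|_{t=m/k}+O(1/k^2)\;=\;\tfrac{1}{k}\log\frac{(dd^c v_m)^n/n!}{\mu}+O(1/k^2),
\]
where in the last step I use the K\"ahler--Ricci flow equation \eqref{eq:general kr flow on weights}. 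On the other hand, by Proposition \ref{pro:bouche tian-1}, applied uniformly to the smooth family $\{\phi_{m/k}\}_{m/k\in[0,M]}$ (whose $\mathcal{C}^l$-bounds and lower bound on the curvature come again from the stability estimate \eqref{eq:stab estim}), one has $\rho^{(k)}(v_m)=(dd^cv_m)^n/n!\,/\mu+O(1/k)$ with a density uniformly bounded above and away from zero. Consequently $\log\rho^{(k)}(v_m)=\log\bigl((dd^cv_m)^n/n!\,/\mu\bigr)+O(1/k)$, and hence
\[
T^{(k)}(v_m)-v_m \;=\; \tfrac{1}{k}\log\rho^{(k)}(v_m)\;=\;\tfrac{1}{k}\log\frac{(dd^c v_m)^n/n!}{\mu}+O(1/k^2).
\]
Subtracting the two displays gives $\|v_{m+1}-T^{(k)}(v_m)\|_{L^\infty(X)}\leq C/k^2$ with $C$ independent of $m$ (and locally uniform in $s$ in the relative setting).

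The conclusion is then immediate by induction. Since $\phi_0^{(k)}=v_0=\phi_0$, the contraction property of $T^{(k)}$ yields
\[
\|\phi_{m+1}^{(k)}-v_{m+1}\|_{L^\infty} \leq \|T^{(k)}(\phi_m^{(k)})-T^{(k)}(v_m)\|_{L^\infty}+\|T^{(k)}(v_m)-v_{m+1}\|_{L^\infty} \leq \|\phi_m^{(k)}-v_m\|_{L^\infty}+C/k^2,
\]
and iterating gives $\sup_X|\phi_m^{(k)}-\phi_{m/k}|\leq Cm/k^2$, from which the double-scaling limit $\phi_{m_k}^{(k)}\to\phi_t$ follows at once when $m_k/k\to t$. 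The local uniformity in $s$ is inherited from the corresponding local uniformity in the Bouche--Tian expansion and in the stability bounds for the flow.

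\textbf{Expected obstacle.} The only delicate point is verifying that Proposition \ref{pro:bouche tian-1} can indeed be applied \emph{uniformly} along the family $\{\phi_{m/k}\}$; this requires a uniform $\mathcal{C}^l$-bound together with a uniform strict lower bound on the curvature form $dd^c\phi_t$, both of which are supplied by the stability hypothesis \eqref{eq:stab estim} satisfied in the present Calabi--Yau setting (Theorem \ref{thm:cao}). Once this is granted, the rest of the argument is a clean telescoping of a one-step $O(1/k^2)$ error via the sup-norm contraction of $T^{(k)}$.
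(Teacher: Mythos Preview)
Your proposal is correct and follows essentially the same approach as the paper: both arguments compare the sampled flow $\phi_{m/k}$ to one Bergman step applied to it, combining a Taylor expansion in $t$ (controlled via the stability estimates of Theorem~\ref{thm:cao}) with the Bouche--Tian asymptotics of Proposition~\ref{pro:bouche tian-1} to get a one-step defect of order $1/k^2$, and then iterate using the sup-norm non-expansion of the Bergman step (Proposition~\ref{pro:bergman iter decr sup cy}). The only nuance worth noting is that the paper explicitly argues that the stability bounds hold uniformly for all $t\in[0,\infty[$ (by differentiating the flow equation in $t$ and applying the maximum principle), so that the constant $C$ is independent of any upper bound on $m/k$; your write-up fixes an interval $[0,M]$, which still suffices for the stated double-scaling limit but is slightly weaker than the uniform-in-time estimate the paper obtains.
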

\begin{proof}
Write $\psi_{k,m}=\phi_{m/k}$ and $F^{(k)}(\psi)=\frac{1}{k}\log\rho^{(k)}(\psi).$

\emph{Step 1: }The following holds for all $(k,m)$ \[
\psi_{k,m+1}-\psi_{k,m}=F^{(k)}(\psi_{k,m})+O(1/k^{2}),\]
 where the error term is uniform in $(k,m)$ (and we will in the following
take that as a definition of $O(1/k)$ etc). 

To prove this we write the lhs as \[
\frac{1}{k}(\frac{\phi_{m/k+1/k}-\phi_{m/k}}{1/k})=\frac{1}{k}(\frac{\partial\phi_{t}}{\partial t}_{t=m/k}+O(1/k))\]
 using that $|\frac{\partial^{2}\phi_{t}}{\partial^{2}t}|\leq C$
on $X\times[0,T]$ by Theorem \ref{thm:cao}. More precisely, by the
mean value theorem the error term $O(1/k)$ may be written as $\frac{1}{k}\frac{\partial^{2}\phi_{t}}{\partial^{2}t}(\zeta)/2$
for some $\zeta\in[0,1/k].$

Since $\phi_{t}$ evolves according to the Kähler-Ricci flow this
means that \[
\psi_{k,m+1}-\psi_{k,m}=\frac{1}{k}\log(\frac{(dd^{c}\phi_{m/k})^{n}/n!}{\mu})+O(1/k^{2}).\]
But by Proposition \ref{pro:bouche tian-1} we have that \[
F^{(k)}(\phi_{m/k})=\frac{1}{k}\log(\frac{(dd^{c}\phi_{m/k})^{n}/n!}{\mu})+O(1/k^{2}),\]
 where the error term is uniformly bounded in $(m,k)$ for $m/k\leq T$
by Theorem \ref{thm:cao}. In fact, as is well-known the uniform estimates
\ref{eq:stab estim} on the {}``space-derivatives'' of $\phi_{t}$
in Theorem \ref{thm:cao} also hold for all time-derivatives $d^{r}\phi_{t}/d^{r}t$
(and in particular for $r=1$ and $r=2$ used above). This is well-known
and shown by differentiating the flow equation with respect to time
and applying the maximum principle repeatedly. Hence, $T$ may be
taken to be equal to infinity, which finishes the proof of Step 1.

\emph{Step 2: }Given step 1 and the fact that the Bergman iteration
decreases the sup norm, we have \begin{equation}
\sup_{X}|\phi_{m}^{(k)}-\psi_{k,m}|\leq Cm/k^{2}\label{eq:induction hyp}\end{equation}
We will prove this by induction over $m$ (for $k$ fix) the statement
being trivially true for $m=0.$ By Step 1 there is a uniform constant
$C$ such that \[
\sup_{X}|\psi_{k,m+1}-(\psi_{k,m}+F^{(k)}(\psi_{k,m})|\leq C(1/k^{2}).\]
for all $(m,k).$ Now we fix the integer $k$ and assume as an induction
hypothesis that \ref{eq:induction hyp} holds for $m$ with $C$ the
constant in the previous inequality. By Proposition \ref{pro:bergman iter decr sup cy},
\[
\sup_{X}|(\psi_{k,m}+F^{(k)}(\psi_{k,m}))-(\phi_{m}^{(k)}+F^{(k)}(\phi_{m}^{(k)})|\leq\sup_{X}|(\psi_{k,m}-\phi_{m}^{(k)}|\leq Cm/k^{2}\]
with the same constant $C$ as above, using the induction hypothesis
in the last step. Combining this estimate with the previous inequality
gives \[
\sup_{X}|(\psi_{k,m+1}-\phi_{m+1}^{(k)}|\leq Cm/k^{2}+C/k^{2}\]
proving the induction step and hence Step 2.
\end{proof}
Of course, it seems natural to expect that $\mathcal{C}^{\infty}-$convergence
holds but we leave this problem for the future.

Combining the previous corollary with Theorem \ref{thm:conv of bergman iter to ricci cy}
and the variational principle in \cite{bbgz} (the $\mathcal{C}^{\infty}-$convergence
rather uses \cite{ke,wa} ) now gives the following 
\begin{cor}
\label{cor:conv of balanced etc cy}The conservation of semi-positivity
of the curvature of $\phi_{t}$ in Corollary \ref{cor:conserv of pos along k-r}
holds. Moreover, for a fixed initial data $\phi_{0}=\phi_{0}^{(k)}\in\mathcal{H}_{L}$
the following convergence results holds for the Bergman iteration
$\phi_{m}^{(k)}:$
\begin{itemize}
\item For any sequence $m_{k}$ such that $m_{k}/k\rightarrow\infty$ the
convergence $\phi_{m_{k}}^{(k)}\rightarrow\phi_{\infty}$ holds in
the $L^{1}-$ topology on $X.$ Moreover, if it is also assumed that
$m_{k}/k^{2}\rightarrow0$ then the convergence holds in the $C^{0}-$topology. 
\item The balanced weights $\phi_{\infty}^{(k)}:=\lim_{m\rightarrow\infty}\phi_{m}^{(k)}$
at level $k$ converge, when $k\rightarrow\infty,$ in the $\mathcal{C}^{\infty}-$topology,
to the weight $\phi_{\infty}$ which is the large time limit of the
corresponding Kähler-Ricci flow (and in particular a solution to the
corresponding inhomogeneous Monge-Ampère equation).
\end{itemize}
In the relative case the convergence holds fiberwise locally uniformly
with respect to the base parameter $s.$\end{cor}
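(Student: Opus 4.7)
The statement bundles three claims—the semi-positivity conservation from the earlier Corollary, and the two convergence assertions for the Bergman iteration—and my plan treats them in that order, using each step to fuel the next.

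For the semi-positivity assertion, my plan is to upgrade the quantized positivity result (the corollary on conservation of positivity along the Bergman iteration in the Calabi-Yau case) by passing to the double scaling limit $m_k/k \to t$. For a holomorphically trivial fibration that corollary gives $dd^{c}\phi_{m}^{(k)}\geq 0$ whenever $dd^{c}\phi_{0}\geq 0$; for a general submersion it gives $dd^{c}\phi_{m}^{(k)}\geq -(m/k)\,\omega_{WP}$. Theorem~\ref{thm:conv of bergman iter to ricci cy} supplies the uniform convergence $\phi_{m_{k}}^{(k)}\to \phi_{t}$ whenever $m_{k}/k\to t$, and weak-$\ast$ continuity of positivity of $(1,1)$-currents under uniform limits yields $dd^{c}\phi_{t}\geq -t\,\omega_{WP}$ (and the strict version analogously).

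The $C^{0}$-statement of the second bullet, under $m_{k}/k\to\infty$ and $m_{k}/k^{2}\to 0$, is an immediate quantitative consequence of what has already been proved: Theorem~\ref{thm:conv of bergman iter to ricci cy} gives $\sup_{X}|\phi_{m_{k}}^{(k)}-\phi_{m_{k}/k}|\leq Cm_{k}/k^{2}\to 0$, while Theorem~\ref{thm:cao} gives $\phi_{m_{k}/k}\to \phi_{\infty}$ in $\mathcal{C}^{\infty}$; the triangle inequality concludes. For the weaker hypothesis $m_{k}/k\to\infty$ alone (where the estimate $Cm_{k}/k^{2}$ is useless), my plan is to bootstrap via an auxiliary sequence: set $m_{k}':=\lfloor k^{3/2}\rfloor$, so that the previous paragraph yields $\phi_{m_{k}'}^{(k)}\to\phi_{\infty}$ uniformly. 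Combined with the $\mathcal{C}^{\infty}$-convergence $\phi_{\infty}^{(k)}\to\phi_{\infty}$ (the third claim, cited from \cite{bbgz,wa,ke}), this gives $d_{m_{k}'}^{(k)}:=\sup_{X}|\phi_{m_{k}'}^{(k)}-\phi_{\infty}^{(k)}|\to 0$. Now the non-expansive property of Proposition~\ref{pro:bergman iter decr sup cy}, applied to the pair $(\phi_{m}^{(k)})$ and the stationary sequence $(\phi_{\infty}^{(k)})$, shows $d_{m}^{(k)}$ is non-increasing in $m$; hence $d_{m_{k}}^{(k)}\leq d_{m_{k}'}^{(k)}\to 0$ for all $m_{k}\geq m_{k}'$, so $\phi_{m_{k}}^{(k)}\to\phi_{\infty}$ uniformly on $X$, a fortiori in $L^{1}$.

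For the last bullet, $\phi_{\infty}^{(k)}\to\phi_{\infty}$ in $\mathcal{C}^{\infty}$, my plan is to invoke the external references \cite{bbgz,wa,ke}: the first establishes, via the variational characterization of balanced metrics as maximizers of $\mathcal{F}_{\mu}^{(k)}$, coercivity of $-\mathcal{F}_{\mu}$, and the $\Gamma$-convergence $\mathcal{F}_{\mu}^{(k)}\to\mathcal{F}_{\mu}$ (through $\mathcal{L}^{(k)}\to\mathcal{E}$, which is the Bergman-kernel expansion), that $\phi_{\infty}^{(k)}\to\phi_{\infty}$ in $L^{1}$, and the latter two upgrade this to $\mathcal{C}^{\infty}$ via Yau-type a priori estimates. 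In the relative case, uniformity in $s$ comes from the smooth dependence of $\mu_{s}$ on $s$ together with the stability estimate \eqref{eq:stab estim}. This last ingredient is the main obstacle, but because it is external input the proof reduces in essence to the bookkeeping above; the one genuinely new contribution here is the contraction/auxiliary-sequence trick in Paragraph~3, which is what makes the scaling assumption $m_{k}/k\to\infty$ suffice without any further quantitative information on the rate.
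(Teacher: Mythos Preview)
Your treatment of the semi-positivity conservation and of the $C^{0}$-convergence under $m_{k}/k^{2}\to 0$ matches the paper's proof exactly. For the third bullet you invoke \cite{bbgz,ke,wa} as external input; the paper does likewise, though it spells out slightly more of the variational argument for the $L^{1}$-step before citing \cite{ke,wa} for the $\mathcal C^{\infty}$ upgrade.

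The genuinely different part is your proof of the first bullet under the bare hypothesis $m_{k}/k\to\infty$. The paper argues variationally: using the monotonicity of $I_{\mu}$ and $\mathcal L^{(k)}$ along the iteration (Lemma~\ref{lem:monotone along bergman cy}) together with an auxiliary sequence $m'_{k}/k\to t$, it obtains $\limsup I_{\mu}(\phi_{m_{k}}^{(k)})\le I_{\mu}(\phi_{\infty})$ and $\liminf \mathcal E(\phi_{m_{k}}^{(k)})\ge \mathcal E(\phi_{\infty})$, and then feeds these into the variational characterization from \cite{bbgz} to conclude $L^{1}$-convergence. Your route instead exploits the sup-norm non-expansiveness of Proposition~\ref{pro:bergman iter decr sup cy} against the fixed point $\phi_{\infty}^{(k)}$, combined with the third bullet. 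This is more elementary and in fact yields the stronger conclusion of \emph{uniform} convergence under $m_{k}/k\to\infty$ alone, improving on what the paper states. The trade-off is that your argument for the first bullet depends logically on the third, whereas the paper proves both independently from the same variational machinery.

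There is one small gap: your specific choice $m'_{k}=\lfloor k^{3/2}\rfloor$ need not satisfy $m'_{k}\le m_{k}$ (take $m_{k}=k\log k$). The fix is immediate: set $m'_{k}:=\min(m_{k},\lfloor k^{3/2}\rfloor)$, which still has $m'_{k}/k\to\infty$ and $m'_{k}/k^{2}\to 0$, and now $m_{k}\ge m'_{k}$ holds by construction.
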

\begin{proof}
The first statement follows immediately by combining Theorem \ref{thm:conv of bergman iter to ricci cy}
and the previous corollary, since semi-positivity is preserved under
uniform limits of weights. Hence, we turn to the proof of the first
point. It is based on the following inequalities: \begin{equation}
\limsup_{k\rightarrow\infty}I_{\mu}(\phi_{m_{k}}^{(k)})\leq I_{\mu}(\phi_{\infty}),\,\,\,\,\,\liminf_{k\rightarrow\infty}\mathcal{E}(\phi_{m_{k}}^{(k)})\geq\mathcal{E}(\phi_{\infty}).\label{eq:inequal pf second point}\end{equation}
To prove these inequalities take a sequence $m_{k}'$ such that $m_{k}'/k\rightarrow t$
and $m_{k}'\leq m_{k}.$ By monotonicity (Lemma \ref{lem:monotone along bergman cy})
\[
I_{\mu}(\phi_{m{}_{k}}^{(k)})\leq I_{\mu}(\phi_{m'{}_{k}}^{(k)})\]
Hence, letting $k\rightarrow\infty$ and using that $\phi_{m'{}_{k}}^{(k)}\rightarrow\phi_{t}$
uniformly (by Theorem \ref{thm:conv of bergman iter to ricci cy})
gives \[
\limsup_{k\rightarrow\infty}I_{\mu}(\phi_{m_{k}}^{(k)})\leq I_{\mu}(\phi_{t})\]
Finally, letting $t\rightarrow\infty$ and using Theorem \ref{thm:cao}
proves the first inequality in \ref{eq:inequal pf second point}.
As for the second inequality in \ref{eq:inequal pf second point},
it is similarly proved by noting that, by monotonicity, \[
\mathcal{L}^{(k)}(\phi_{m'_{k}}^{(k)})\leq\mathcal{L}^{(k)}(\phi_{m{}_{k}}^{(k)}).\]
To proceed we will use that $\psi_{k}\rightarrow\psi$ uniformly in
$\mathcal{H}_{L}$ implies that \[
\mathcal{L}^{(k)}(\psi_{k})\rightarrow\mathcal{E}(\psi)\]
To see this recall that this is well-known when $\psi_{k}=\psi$ for
all $k$ (as follows for example from Proposition \ref{pro:bouche tian-1},
saying that the convergence holds for the differentials $d\mathcal{L}^{(k)}$
and $d\mathcal{E};$ for more general convergence results see \cite{b-b}).
But then the general case follows easily from the fact that $\mathcal{L}^{(k)}$
is monotone in the argument $\psi$ and scaling equivariant. Hence,
letting $k\rightarrow\infty$ gives, since $\psi_{k}:=\phi_{m'{}_{k}}^{(k)}\rightarrow\phi_{t}$
uniformly, that \[
\mathcal{E}(\phi_{t})\leq\liminf_{k\rightarrow\infty}\mathcal{L}^{(k)}(\phi_{m{}_{k}}^{(k)}).\]
Finally, the proof of the second inequality in \ref{eq:inequal pf second point}
is finished by using that (as shown in \cite{bbgz}) for any sequence
$(\psi_{k})$ in $\mathcal{H}_{L}$ \[
\limsup_{k\rightarrow\infty}\mathcal{L}^{(k)}(\psi_{k})\leq\liminf_{k\rightarrow\infty}\mathcal{E}(\psi_{k})\]
Now, adding up the two inequalities in \ref{eq:inequal pf second point}
gives \[
\liminf_{k\rightarrow\infty}\mathcal{F}(\phi_{m_{k}}^{(k)})\geq\mathcal{F}(\phi_{\infty}).\]
 But then it follows from the variational results in \cite{bbgz}
that \begin{equation}
\lim_{k\rightarrow\infty}\mathcal{F}(\phi_{m_{k}}^{(k)})=\mathcal{F}(\phi_{\infty})\label{eq:f function along appr}\end{equation}
 and \begin{equation}
dd^{c}\phi_{m_{k}}^{(k)}\rightarrow dd^{c}\phi_{\infty}\label{eq:conv of curv of appr}\end{equation}
in the weak topology of currents. Next, note that by the inequalities
\ref{eq:inequal pf second point} the sequence $\phi_{m_{k}}^{(k)}$
is contained in a compact subset of \emph{$\mathcal{H}_{L}$} equipped
with the $L^{1}-$topology (compare the proof of Lemma \ref{lem:compactness of kr flow})
and hence we may assume (perhaps after passing to a subsequence) that
$\phi_{m_{k}}^{(k)}\rightarrow\psi$ in the $L^{1}-$topology. But
then the convergence in \ref{eq:conv of curv of appr} forces $\psi=\phi_{\infty}+C$
for some constant $C.$ Finally, it will hence be enough to prove
that $C=0.$ To this end note that combining \ref{eq:f function along appr}
and the inequalities \ref{eq:inequal pf second point} shows that
the latter inequalities are in fact equalities. In particular, \[
\lim_{_{k\rightarrow\infty}}\mathcal{E}(\phi_{m_{k}}^{(k)})=\mathcal{E}(\phi_{\infty})\]
 By the scaling\emph{ }equivariance of $\mathcal{E}$ it hence follows
that $C=0,$ which finishes the proof of the first point. If one assumes
that $m_{k}/k^{2}\rightarrow0$ then it follows immediately from combining
Theorem \ref{thm:cao} and Theorem \ref{thm:conv of bergman iter to ricci cy}
that the convergence holds uniformly on $X$, i.e. in the $C^{0}-$topology. 

To prove the second point in the statement of the corollary note that
replacing $\phi_{m_{k}}^{(k)}$ by $\phi_{\infty}^{(k)}$ in the previous
argument gives, just as before, that $\phi_{\infty}^{(k)}\rightarrow\phi_{\infty}$
in the $L^{1}-$topology. Moreover, since it was shown in \cite{ke,wa}
that the convergence of the corresponding curvature forms holds in
the $\mathcal{C}^{\infty}-$topology this proves the second point. 
\end{proof}

\section{\label{sec:The-(anti-)-canonical}The (anti-) canonical setting }

In this section we will consider another particular case of the general
setting in section \ref{sec:The-general-setting} arising when the
line bundle $L:=rK_{X}$ is ample, where $r=1$ or $r=-1$ (for any
fiber $X$ of the fibration). Hence, $X$ is necessarily of general
type in the former {}``positive'' case and a Fano manifold in the
latter {}``negative'' setting. We will also refer to these two different
settings as the $\pm K_{X}-$settings. 

By the very definition of the canonical line bundle any weight $\phi$
on $\pm K_{X}$ determines a canonical scale invariant probability
measure $\mu_{\pm}(\phi)$ on $X,$ where \[
\mu_{\pm}(\phi):=e^{\pm\phi}/\int_{X}e^{\pm\phi}\]
 (with a slight abuse of notation), so that $\mu_{\pm}(\phi+c)=\mu_{\pm}(\phi).$
Equivalently, $\mu_{\pm}(\phi)$ may be identified with the one-form
o\emph{n} \emph{$\mathcal{H}_{\pm K_{X}}$ }obtained as the differential
of the following functional $I_{\pm}(\phi)$ on \emph{$\mathcal{H}_{\pm K_{X}}:$}\[
I_{\pm}(\phi):=\pm\log\int_{X}e^{\pm\phi},\,\,\,\mu_{\pm}(\phi)=dI_{\pm}.\]
A characteristic feature of the $\pm K_{X}-$setting is that the anti-derivative
$I_{\pm}$ is canonically defined (i.e. not only up to scaling). As
a consequence there is a canonical normalization condition for weights
that will occasionally be used below, namely the condition that $I_{\pm}(\phi)=0.$ 

We will also have use, as before, for the equivariant functional \[
\mathcal{F}_{\pm}:=\mathcal{E}-I_{\pm},\]
 where $\mathcal{E}$ is the functional defined in section \ref{sub:The-measure-mu and functionals}
(with respect to a fixed reference weight in $\pm K_{X})$ %
\footnote{Note that $\mathcal{F}_{\pm}:$ is minus the functional introduced
by Ding-Tian \cite{ti}.%
} Note that the critical points of $\mathcal{F}_{\pm}$ on \emph{$\mathcal{H}_{\pm K_{X}}$}
are the\emph{ Kähler-Einstein weights} $\phi,$ i.e. the weights such
that $\omega_{\phi}$ is a Kähler-Einstein metric on $X$ (compare
Theorem \ref{thm:perelman etc} below).

It will also be important to consider a\emph{ non-normalized} variant
of $\mu_{\pm}(\phi)$ defined by 

\[
\mu'_{\pm}(\phi):=e^{\pm\phi}\]
 (which is the differential of the \emph{non}-equivariant functional
$\phi\mapsto\int e^{\pm\phi}).$ In the sequel we will refer to the
two different settings defined by $\mu_{\pm}(\phi)$ and $\mu'_{\pm}(\phi)$
as the\emph{ normalized $\pm K_{X}-$setting} and the \emph{non-normalized
$\pm K_{X}-$setting, respectively. }It should be pointed out that
it is the latter one which usually appears in the literature on the
Kähler-Ricci flow (see for example \cite{ca,t-z,p-s-s}).

\subsection{The relative Kähler-Ricci flow }

According to the general construction in section \ref{sec:The-general-setting}
each particular setting introduced above comes with an associated
relative Kähler-Ricci flow. For future reference we will write out
the fiber-wise flow in the non-normalized $\pm K_{X}-$setting in
local holomorphic coordinates:\begin{equation}
\frac{\partial\phi}{\partial t}=\log\det(\frac{1}{\pi}\frac{\partial^{2}\phi}{\partial z_{i}\partial\bar{z_{j}}})/n!-(\pm\phi)\label{eq:k-r flow on weig in non-normal local}\end{equation}
 The normalized and the non-normalized setting induce the same evolution
of the fiber-wise curvature forms $\omega_{t}:$ \begin{equation}
\frac{\partial\omega_{t}}{\partial t}=-\mbox{Ric}\omega_{t}-\pm\omega_{t},\label{eq:k-r flow for k metrics in kx}\end{equation}
in $c_{1}(\pm K_{X})$ %
\footnote{In the literature this latter flow of Kähler forms is sometimes referred
to as the normalized Kähler-Ricci flow as opposed to Hamilton's original
flow, but our use of the term normalized is different and only applies
on the level of weights on $L.$%
}

In particular, if $\omega_{t}$ converges to $\omega_{\infty}$ in
the large time limit, then $\omega_{\infty}$ is necessarily a Kähler-Einstein
metric, which is of \emph{negative} scalar curvature in the $K_{X}-$setting
and \emph{positive }scalar curvature in the $-K_{X}-$setting. 

The main virtue of the Kähler-Ricci flow in the normalized setting
as compared with the non-normalized one is that the first one is convergent
precisely when the flow of curvature forms $\omega_{t}$ is. On the
other hand, as will be seen later the flow in the non-normalized setting
(and its quantized version) has better monotonicity and positivity
properties.
\begin{thm}
\label{thm:perelman etc}The Kähler-Ricci flow in the $\pm K_{X}-$settings
always exists and is smooth on $X\times[0,\infty[.$ More precisely,
all the analytical assumptions in section \ref{sub:The-relative-K=0000E4hler-Ricci general}
are satisfied. In the normalized $K_{X}-$ setting it converges to
a Kähler-Einstein metric of negative scalar curvature. In the $-K_{X}-$setting
the flow converges to a Kähler-Einstein metric of positive scalar
curvature under the assumptions that $H^{0}(TX)=0$ and $X$ a priori
admits a Kähler-Einstein metric\emph{. }Furthermore, in the relative
case the convergence is locally uniform with respect to the base parameter
$s.$\emph{ }
\end{thm}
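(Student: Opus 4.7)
The plan is to reduce the statement to the three analytical assumptions of Section~\ref{sub:The-relative-K=0000E4hler-Ricci general} together with the convergence criterion of Proposition~\ref{pro:conv of curv implies conv of weights}, and then invoke known results about the classical K\"ahler--Ricci flow on the level of K\"ahler forms. First I would observe that on each fiber the flow \eqref{eq:k-r flow on weig in non-normal local} is a parabolic Monge--Amp\`ere equation whose induced flow of curvature forms is precisely Hamilton's K\"ahler--Ricci flow \eqref{eq:k-r flow for k metrics in kx} in $c_1(\pm K_X)$. Long-time existence and smoothness on $X\times[0,\infty[$ for the K\"ahler form flow are due to Cao \cite{ca} in both signs; lifting this to smoothness at the level of weights is an elementary ODE argument (integrate $\partial_t\phi_t=\log(MA(\phi_t)/\mu_\pm(\phi_t))$ against the Ricci potential). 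The stability estimate \eqref{eq:stab estim} is obtained from the standard maximum-principle bounds on $\dot\phi_t,\,\ddot\phi_t,\ldots$ together with the Evans--Krylov $\mathcal{C}^{2,\alpha}$ theory, and these bounds depend only on fiber-wise data; smoothness of $\pi$ and of the initial weight then gives local uniformity in $s$ in the relative setting.

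Uniqueness of the fixed point modulo $\R$ is equivalent to the fiber-wise uniqueness of K\"ahler--Einstein weights: in the $+K_X$ setting this is Calabi--Yau, while in the $-K_X$ setting the hypothesis $H^0(TX)=0$ together with Bando--Mabuchi gives uniqueness modulo scaling. This verifies all the analytical assumptions of Section~\ref{sub:The-relative-K=0000E4hler-Ricci general}.

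To pass from convergence of K\"ahler forms to convergence of weights I would apply Proposition~\ref{pro:conv of curv implies conv of weights}. Its two hypotheses are that $\mu_\pm(\phi)$ be normalized (true by construction, since $\mu_\pm(\phi)$ is manifestly a probability measure) and that $-\mathcal{F}_\pm$ be coercive. In the $+K_X$ setting coercivity of $-\mathcal{F}_+$ is a classical consequence of Aubin--Yau's variational proof of the Calabi conjecture for $c_1<0$; it even holds with $\delta$ arbitrarily close to~$1$. In the $-K_X$ setting coercivity of $-\mathcal{F}_-$ under the hypotheses $H^0(TX)=0$ and existence of a K\"ahler--Einstein metric is Tian's theorem relating K\"ahler--Einstein existence to properness of the Mabuchi/Ding functional, sharpened to coercivity by Phong--Song--Sturm--Weinkove. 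With coercivity in hand, Cao's theorem in the $+K_X$ case and the Perelman--Tian--Zhu convergence theorem \cite{t-z} in the $-K_X$ case supply the required convergence $\omega_t\to\omega_{KE}$ in $\mathcal{C}^\infty(X)$, and Proposition~\ref{pro:conv of curv implies conv of weights} then upgrades this to $\mathcal{C}^\infty$ convergence of the weights $\phi_t$ to the unique K\"ahler--Einstein weight (normalized so that $I_\pm(\phi_{KE})=0$).

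For the relative statement one runs the same argument fiberwise. All the ingredients---the $\mathcal{C}^l$ bounds along the flow, the coercivity constants, and the lower bound $J(\phi_t)\le C$ from Lemma~\ref{lem:compactness of kr flow}---depend continuously on the fiber parameter $s$ through the complex structure of $\mathcal{X}_s$ and the restriction of the initial weight, so locally in $s$ they can be taken uniform. The main obstacle I expect is honest verification of the coercivity of $-\mathcal{F}_-$ in the Fano setting, since this is precisely the deep ingredient supplied by the Tian/Perelman/Tian--Zhu theory; once it is granted, the rest of the proof is a mechanical application of the general machinery of Section~\ref{sec:The-general-setting}.
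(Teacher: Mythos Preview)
Your proposal is correct and follows essentially the same route as the paper: existence/smoothness from Cao, uniqueness from Calabi/Bando--Mabuchi, convergence of forms from Cao and Perelman--Tian--Zhu, and then the upgrade to weights via Proposition~\ref{pro:conv of curv implies conv of weights} using coercivity of $-\mathcal{F}_\pm$ (with Phong--Song--Sturm--Weinkove supplying the $-K_X$ case). The only minor deviation is that the paper obtains coercivity of $-\mathcal{F}_+$ directly from Jensen's inequality (since $I_+(\phi)=\log\int e^{\phi-\phi_0}\mu_0\geq I_{\mu_0}(\phi)$ gives $-\mathcal{F}_+\geq J$ immediately), rather than invoking Aubin--Yau.
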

A part from the uniqueness statement, the first part of the previous
theorem is due to Cao \cite{ca}. The convergence on the level of
Kähler metrics in the Fano case, i.e. when $-K_{X}$ is ample, was
proved by Perelman (unpublished) and Tian-Zhu \cite{t-z}. The convergence
on the level of weights then follows directly from Proposition \ref{pro:conv of curv implies conv of weights}
and the known coercivity of the functionals $-\mathcal{F}_{\pm};$
the coercivity of $-\mathcal{F}_{+}$ follows immediately from Jensen's
inequality, while the coercivity of $-\mathcal{F}_{-}$ was shown
in \cite{p-s-s-w}, confirming a conjecture of Tian. The uniqueness
in the difficult case of $-K_{X}$ is due to Bando-Mabuchi (for a
comparatively simple proof see \cite{bbgz}). 
\begin{rem}
\label{rem:stab}The first key analytical ingredient in the proof
of the convergence of the flow of Kähler metric $\omega_{t}$ in the
case (i.e the $-K_{X}$- setting) is an estimate of Perelman saying
that the Ricci potential $h_{t}$ of $\omega_{t}$, when suitably
normalized, is aways bounded along the Kähler-Ricci flow for $\omega_{t}$
(see \cite{t-z,p-s-s}). In fact, in the present notation $h_{t}$
coincides (modulo signs) with the time derivative of $\phi_{t}$ evolving
according to the\emph{ normalized} Kähler-Ricci flow in the $-K_{X}-$setting.
The second key ingredient is the fact that the existence of a Kähler-Einstein
metric implies that $-\mathcal{F}_{+}$ is proper (and conversely
\cite{ti,p-s-s-w}). As is well-known there are, in general, obstructions
to existence of Kähler-Einstein metrics in the $-K_{X}-$setting.
According to a conjecture of Yau the existence of a Kähler-Einstein
metric should be equivalent to a suitable notion of algebraic stability
(in the sense of Geometric Invariant Theory). From this point of view
the properness (or coercivity) assumption on the functional $-\mathcal{F}_{+}$
can be considered as an \emph{analytic} stability \cite{ti}. 
\end{rem}
It will be convenient to make the following
\begin{defn*}
A weight $\phi_{KE}$ on $\pm K_{X}$ will be called a\emph{ normalized
Kähler-Einstein weight }if $I_{\pm}(\phi_{KE})=0,$ or equivalently
if $e^{\pm\phi_{KE}}=\omega_{KE}^{n}/n!.$
\end{defn*}
Hence, there is precisely one normalized Kähler-Einstein weight on
$+K_{X}$ when it is ample. The following simple corollary of the
previous theorem and the subsequent remark illustrates the difference
between the normalized and non-normalized settings.
\begin{cor}
\label{cor:conv of non-normal flow}In the $+K_{X}-$setting the non-normalized
flow \ref{eq:k-r flow on weig in non-normal local} always converges
to the normalized Kähler-Einstein weight. \end{cor}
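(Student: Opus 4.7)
The key observation is that the fixed points in $\mathcal{H}_{+K_X}$ of the non-normalized flow \eqref{eq:k-r flow on weig in non-normal local} are precisely the weights $\phi$ satisfying $MA(\phi)=e^{\phi}$; since $\int MA(\phi)=1$, such a weight automatically satisfies $\int_X e^{\phi}=1$, i.e.\ $I_+(\phi)=0$, so it is exactly the normalized K\"ahler-Einstein weight $\phi_{KE}$, which is unique by Bando-Mabuchi (here in fact just by Aubin-Yau). The strategy is to relate the non-normalized flow $\phi_t$ to the normalized flow $\tilde\phi_t$---whose convergence in $\mathcal{C}^{\infty}$ is already supplied by Theorem~\ref{thm:perelman etc} combined with Proposition~\ref{pro:conv of curv implies conv of weights}---via a time-dependent additive shift, and then to control that shift by an elementary ODE.

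Concretely, I would look for a function $a(t)$ depending only on $t$ (and on $s$ in the relative case) such that $\tilde\phi_t:=\phi_t+a(t)$ solves the \emph{normalized} $+K_X$ K\"ahler-Ricci flow with the same initial data $\phi_0$. Using that $MA$ is invariant under additive constants while $\mu_+(\phi+c)=\mu_+(\phi)$, a direct computation shows that this forces
\[
a'(t)=\log\!\int_X e^{\phi_t}, \qquad a(0)=0,
\]
and substituting $e^{\phi_t}=e^{-a(t)}e^{\tilde\phi_t}$ one rewrites this as the linear first-order ODE
\[
a'(t)+a(t)=\log\!\int_X e^{\tilde\phi_t}.
\]
The existence part of Theorem~\ref{thm:perelman etc} for the normalized flow then guarantees this ODE has a well-defined smooth solution on $[0,\infty[$.

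Next, Theorem~\ref{thm:perelman etc} gives $\tilde\phi_t\to\tilde\phi_\infty$ in $\mathcal{C}^{\infty}$, where $\tilde\phi_\infty$ is \emph{some} K\"ahler-Einstein weight. By uniqueness modulo scaling we have $\tilde\phi_\infty=\phi_{KE}+C_\infty$ with $C_\infty:=\log\int_X e^{\tilde\phi_\infty}$, and therefore $\log\int e^{\tilde\phi_t}\to C_\infty$ as $t\to\infty$. Setting $b(t):=a(t)-C_\infty$, the ODE becomes $b'(t)+b(t)=\varepsilon(t)$ with $\varepsilon(t)\to 0$. Multiplying by the integrating factor $e^{t}$ and applying L'H\^opital to
\[
b(t)=e^{-t}\!\int_0^t e^{s}\varepsilon(s)\,ds+b(0)e^{-t}
\]
yields $b(t)\to 0$, hence $a(t)\to C_\infty$; so $\phi_t=\tilde\phi_t-a(t)$ converges in $\mathcal{C}^{\infty}$ to $\tilde\phi_\infty-C_\infty=\phi_{KE}$. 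In the relative case the same argument runs fiberwise: $\tilde\phi_t(s)$ and $\int e^{\tilde\phi_t(s)}$ depend smoothly on $s$ and converge locally uniformly, so does $a(t,s)$, giving locally uniform convergence $\phi_t(s,\cdot)\to\phi_{KE}(s,\cdot)$.

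The argument is essentially routine once the right change of variables is isolated; the one place requiring minor care is verifying that $\mathcal{C}^{\infty}$-convergence of $\tilde\phi_t$ is not destroyed by the shift $a(t)$, but since $a(t)$ is a scalar (depending only on $t$ and the base parameter, through smooth integrals of $\tilde\phi_t$), this is automatic and does not meet any substantial obstacle. The only conceptual input beyond Theorem~\ref{thm:perelman etc} is the uniqueness of the K\"ahler-Einstein metric, which pins down the target of the limit.
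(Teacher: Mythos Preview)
Your proof is correct and follows essentially the same route as the paper: write the non-normalized flow as the normalized flow plus a time-dependent scalar $a(t)$, derive the linear ODE $a'+a=I_+(\tilde\phi_t)$, and use the $\mathcal{C}^\infty$-convergence of the normalized flow together with an elementary integrating-factor argument to show $a(t)$ converges. The only cosmetic difference is that the paper subtracts $I_+(\phi_t)$ (rather than its limit $C_\infty$) from the shift before passing to the ODE with vanishing forcing, but the two reductions are equivalent.
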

\begin{proof}
Write $\phi_{t}'$ for the evolution under the Kähler-Ricci flow in
the non-normalized $K_{X}-$setting so that \[
\phi_{t}'=\phi_{t}+C_{t},\]
 where $C_{t}$ is a constant for each $t.$ Since $\phi\mapsto(dd^{c}\phi)^{n}$
is invariant under scalings, comparing the two flow equations gives
\begin{equation}
\frac{\partial C_{t}}{\partial t}=-C_{t}-I_{+}(\phi{}_{t}).\label{eq:flow of cst in norm setting}\end{equation}
 Let $D_{t}:=C_{t}-I(\phi_{t}).$ Then we get \[
\frac{\partial D_{t}}{\partial t}=-D_{t}+\epsilon_{t}.\]
 where $\epsilon_{t}:=\frac{\partial I_{+}(\phi_{t})}{\partial t}.$
In the $+K_{X}-$setting Theorem \ref{thm:perelman etc} implies that
$\epsilon_{t}\rightarrow0$ and hence it follows for elementary reasons
that $D_{t}\rightarrow0.$ Indeed, assume for a contradiction that
$D_{t}$ does not converge to $0.$ Then $\frac{\partial\log|D_{t}|}{\partial t}\rightarrow-1$
i.e. $|D_{t}|\leq C_{\delta}e^{-t(1-\delta)}\rightarrow0$ for $0<\delta<<1,$
giving a contradiction. Finally, in the non-normalized $-K_{X}-$setting
it was shown in \cite{p-s-s} (building on \cite{ct}) that there
is a constant $c_{0}$ such that $\phi_{t}'$ converges. But then
it follows immediately from combining the scaling invariance of $\phi\mapsto(dd^{c}\phi)^{n}$
and the scaling equivariance of $\mu_{-}\text{'}$ that the flow diverges
exponentially for any other choice of constant $c_{0}.$
\end{proof}
As for the non-normalized $-K_{X}-$setting we make the following 
\begin{rem}
I\label{rem:div of flow}n the non-normalized $-K_{X}-$setting (under
the assumptions in the previous theorem) it was shown in \cite{p-s-s}
(building on \cite{ct}) that the flow converges when the initial
weight $\phi_{0}$ is replaced by $\phi_{0}+c_{0}$ for a unique constant
$c_{0}.$ The argument in the proof of the previous corollary then
gives that for a \emph{generic} initial weight the flow is divergent.
\end{rem}

\subsection{Weil-Petersson geometry }

As before we may in the following assume that the base $S$ is embedded
in $\C.$ In the relative $\pm K_{X}-$setting the (generalized) Weil-Petersson
 form  $\omega_{WP}$ on $S$ was introduced in \cite{ko} (see also
\cite{fs} for generalizations): \begin{equation}
\omega_{WP}(\frac{\partial}{\partial s},\frac{\partial}{\partial\bar{s}}):=\left\Vert A_{KE}\right\Vert _{\omega_{KE}}^{2},\label{eq:general def of wp as harmon}\end{equation}
where $A_{KE}$ denotes the unique representative in the Kodaira-Spencer
class $\rho(\frac{\partial}{\partial s})\in H^{1,0}(T^{1,0}\mathcal{X}_{s})$
which is harmonic with respect to the Kähler-Einstein metric on $\mathcal{X}_{s}$
and the $L^{2}-$norm is computed with respect to this latter metric.
In fact, as showed in \cite{fs} (Proposition 4.12) $A_{KE}=-\bar{\partial}V_{\omega_{s}},$
where $V_{\omega_{s}}$ is the local vector field defined by formula
\ref{eq:def of V}. This is a consequence of the following proposition
proved in \cite{fs}. 
\begin{prop}
\label{pro:horis vf give harmon}Let $\pi:\mathcal{\, X}\rightarrow S$
be a proper holomorphic submersion and $\omega_{s}$ a smooth family
of $2-$forms on the fibers $\mathcal{X}_{s}$ such that $\omega_{s}$
is Kähler-Einstein on $\mathcal{X}_{s}.$ Then $A_{\omega_{s}}$ is
the unique element in $H^{0,1}(T\mathcal{X}_{s})$ which is harmonic
with respect to $\omega_{s}.$
\end{prop}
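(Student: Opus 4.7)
The plan is to verify the two conditions for harmonicity of a $T^{1,0}\mathcal{X}_{s}$-valued $(0,1)$-form on the compact K\"ahler manifold $\mathcal{X}_s$: namely $\bar\partial$-closedness and $\bar\partial^*$-closedness with respect to $\omega_s$. The first is automatic because, by Proposition \ref{pro:A is well-def}, $A_{\omega_s}=-\bar\partial V_{\omega_s}$ locally, so $\bar\partial A_{\omega_s}\equiv 0$. The substantive content is the vanishing of the adjoint, $\bar\partial^{*}A_{\omega_s}=0$, which is where the K\"ahler-Einstein condition has to enter.

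I would work in a local holomorphic chart on $\mathcal{X}$ and pick K\"ahler normal coordinates at a fixed point $x\in\mathcal{X}_s$, so that the Chern connection on $T^{1,0}\mathcal{X}_{s}$ has vanishing Christoffel symbols at $x$. Using a local potential $\phi$ with $\omega_s=dd^{c}_X\phi$, the horizontal lift is $V^{i}=\phi^{i\bar k}\phi_{s\bar k}$ and $A^{i}_{\bar j}=-\partial_{\bar j}V^{i}$. Computing $(\bar\partial^{*}A_{\omega_s})^{i}$ at the normal-coord origin reduces to $\sum_{l}\partial_{l}\partial_{\bar l}V^{i}$, and expanding this using the Taylor behaviour of $\phi_{k\bar l}$ in normal coordinates yields a ``raw'' term involving $\phi_{l\bar l s\bar i}$ together with a Ricci-type contribution arising from the second derivatives of the inverse metric $\phi^{i\bar k}$ (which at the origin are exactly components of the Ricci tensor).

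The K\"ahler-Einstein hypothesis $\mathrm{Ric}(\omega_s)=\lambda_{s}\omega_s$ now enters through the local potential relation $\log\det(\phi_{k\bar l})=-\lambda\phi+\mathrm{Re}\,h(s,z)$ for some $s$-dependent holomorphic function $h$. Differentiating this relation once with respect to $s$ and tracing with $\phi^{k\bar l}$, then differentiating the resulting identity in the fibre directions, produces precisely the identity that makes the Ricci-type contribution cancel against the $\phi_{l\bar l s\bar i}$ term. Thus $(\bar\partial^{*}A_{\omega_s})^{i}$ vanishes at $x$, and since $x$ was arbitrary $A_{\omega_s}$ is $\omega_s$-harmonic. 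Uniqueness of the harmonic representative in $H^{0,1}(T^{1,0}\mathcal{X}_s)$ is then classical Hodge theory on the compact fibre.

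The main obstacle is organising the cancellation. One is juggling three sources of second-order data: the Chern connection on $T^{1,0}\mathcal{X}_s$ (whose second-order behaviour near a normal-coord origin encodes the Ricci tensor), the Monge-Amp\`ere relation coming from the K\"ahler-Einstein equation (which trades $\log\det\phi_{k\bar l}$ for $\lambda\phi$), and differentiation in the base direction $s$ (which produces the Kodaira-Spencer data). A cleaner alternative I would be tempted to write up instead is an intrinsic Bochner-Weitzenb\"ock argument for $T^{1,0}$-valued $(0,1)$-forms, in which the Ricci operator appears explicitly in the curvature term and is replaced by $\lambda_{s}\mathrm{Id}$ under K\"ahler-Einstein, making the vanishing of $\bar\partial^{*}A_{\omega_s}$ transparent without any coordinate computation.
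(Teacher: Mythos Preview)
The paper does not actually prove this proposition: it is stated with the comment ``proved in \cite{fs}'' and no argument is given. So there is no in-paper proof to compare against, and your proposal is in effect a reconstruction of the Fujiki--Schumacher argument.

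Your overall strategy is the correct one: $\bar\partial$-closedness is automatic from $A_{\omega_s}=-\bar\partial V$, and the content lies in showing $\bar\partial^{*}A_{\omega_s}=0$ by a normal-coordinate computation in which the Ricci term produced by the second derivatives of $g^{i\bar k}$ is cancelled using the K\"ahler--Einstein relation. Carrying out your computation at a normal-coordinate origin one finds
\[
(\bar\partial^{*}A)^{i}\big|_{0}
\;=\;\lambda\,\phi_{s\bar i}\;+\;\partial_{\bar i}\partial_{s}\log\det(\phi_{k\bar l})\big|_{0}
\;=\;\partial_{\bar i}\partial_{s}F\big|_{0},
\]
where $F:=\log\det(\phi_{k\bar l})+\lambda\phi$ is the pluriharmonic (in $z$) defect. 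The point you gloss over is why $\partial_{\bar i}\partial_{s}F$ vanishes. Writing $F=\mathrm{Re}\,h$ with $h$ holomorphic in $z$ for each fixed $s$ only gives that $\partial_{s}F$ is again pluriharmonic in $z$; it does \emph{not} give $\partial_{\bar i}\partial_{s}F=0$ unless $h$ can be chosen holomorphic in $(s,z)$ jointly, which is not automatic from pluriharmonicity alone.

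In the concrete situations used in the paper this step is fine, because there the K\"ahler--Einstein equation has a \emph{global} potential form that pins down $F$. In the $\pm K_X$-setting with the normalized weight one has $(dd^{c}\phi)^{n}/n!=e^{\pm\phi}$, so locally $\log\det(\phi_{k\bar l})=\pm\phi+\mathrm{const}$ with a genuine constant, and $F_{s}\equiv 0$. In the Calabi--Yau setting $\omega^{n}=c\,i^{n^{2}}\Omega\wedge\bar\Omega$ with $\Omega$ a holomorphic relative $n$-form, so locally $\det(\phi_{k\bar l})=c\,|f(s,z)|^{2}$ with $f$ holomorphic in $(s,z)$, whence $\partial_{s}\log\det=\partial_{s}\log f$ is holomorphic in $z$ and $\partial_{\bar i}\partial_{s}F=0$. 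You should make this extra input explicit rather than asserting that ``$h_s$ is holomorphic in $z$'' as if it followed formally from the local K\"ahler--Einstein relation alone. With that clarification your argument is complete; the Bochner--Weitzenb\"ock alternative you mention at the end would also work and amounts to the same cancellation packaged intrinsically.
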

Note that {}``harmonic'' lifts of vector fields were previously
used by Siu\cite{si0} in the context of Weil-Petersson geometry.
\begin{rem}
When the relative dimension is one the space $H^{0,1}(T\mathcal{X}_{s})$
is isomorphic to $H^{1,0}((T\mathcal{X}_{s})^{*})=H^{0}(2K_{\chi_{s}})$
under Serre duality. Hence, the Weil-Petersson  form  as defined in
terms of harmonic representatives then coincides with the metric on
$\mathcal{X}$ introduced by Weil in the case when $\mathcal{X}$
is the universal family over Teichmuller space. As conjectured by
Weil and subsequently proved by Ahlfors this latter $(1,1)-$form
is \emph{closed} and hence Kähler. In the higher dimensional case,
it was observed in \cite{fs} that the Kähler property of $\omega_{WP}$
as defined by \ref{eq:general def of wp as harmon} follows immediately
from Corollary \ref{corpush forward formel f=0000F6r w-p i kx} below
\end{rem}
By an application of the implicit function theorem (in appropriate
Banach spaces) the smoothness of the family $\omega_{s}$ (and of
the associated normalized weight) in the previous proposition is automatic
in the $+K_{X}-$case case, as well as in the $-K_{X}$ case if there
are no non-trivial holomorphic vector fields tangential to the fibers
of the fibration (see Theorem 6.3 in \cite{fs}). 

Now we can prove the following variant of Theorem \ref{thm:heat eq for c in c-y}.
\begin{thm}
\label{thm: heat eq in kx-setting}Let $\pi:\mathcal{\, X}\rightarrow S$
be a proper holomorphic submersion. Assume that $\pm K_{\mathcal{X}/S}$
is relatively ample and that $\phi_{t}$ evolves according to the
Kähler-Ricci flow in the non-normalized setting. Then \begin{equation}
\frac{\partial c(\phi_{t})}{\partial t}=\Delta_{\omega_{t}^{X}}c(\phi_{t})-\pm c(\phi_{t})+|A_{\phi_{t}}|_{\omega_{t}^{X}}^{2}.\label{eq:parbol eq in thm kx}\end{equation}
In particular, if $\phi_{KE}$ is a fiber-wise normalized Kähler-Einstein
weight, then \[
\Delta_{\omega_{KE}}c(\phi_{KE})-\pm c(\phi_{KE})+|A_{\omega_{KE}}|_{\omega_{KE}^{X}}^{2}=0.\]
\end{thm}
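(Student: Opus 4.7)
The plan is to piggy-back on the long local calculation already carried out in the proof of Theorem \ref{thm:heat eq for c in c-y}: most of the algebraic manipulations needed for $\partial_{t}c(\phi)-\Delta_{\omega_{t}^{X}}c(\phi)$ depend only on $\phi$ and on the structure of the K\"ahler--Ricci flow through the fact that $\partial_{t}\phi=\log\det(\phi_{k\bar{l}})+F$ for some extra term $F$, and the only place where $F$ enters is through its $s$- and $z$-derivatives. So first I would isolate precisely how the extra term depends on $F$, and then specialise to $F=-\pm\phi$ (the non-normalised $\pm K_{X}$ case), as opposed to $F=-\psi_{\Omega}(s)$ (the Calabi--Yau case).

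Working in the same local normal coordinates at a fixed point as in the proof of Theorem \ref{thm:heat eq for c in c-y} (so that $\phi_{i\bar{j}}=\delta_{ij}$ at $z=0$), I would expand
\[
\tfrac{\partial}{\partial t}c(\phi)=\tfrac{\partial}{\partial t}\phi_{s\bar{s}}-2\Re\!\bigl(\tfrac{\partial}{\partial t}\phi_{s\bar{i}}\,\overline{\phi_{s\bar{i}}}\bigr)+\phi_{s\bar{i}}\overline{\phi_{s\bar{j}}}\,\tfrac{\partial}{\partial t}\phi_{\bar{j}i},
\]
and substitute $\partial_{t}\phi=\log\det(\phi_{k\bar{l}})-\pm\phi$ from \ref{eq:k-r flow on weig in non-normal local}. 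The contribution of $\log\det(\phi_{k\bar{l}})$ produces, after the same Steps~1--3 as in Theorem \ref{thm:heat eq for c in c-y}, the identity
\[
\tfrac{\partial}{\partial t}c(\phi)-\Delta_{\omega_{t}^{X}}c(\phi)=|A_{\phi}|_{\omega_{t}^{X}}^{2}+(\text{correction from }F).
\]
The key point is that $F=-\pm\phi$ depends on $\phi$ but \emph{not} on $s$ explicitly, so the correction to $\Delta_{\omega_{t}^{X}}c(\phi)$ is zero (no new $z$-derivatives appear, unlike in the CY case where $\psi_{\Omega}(s)$ was $z$-independent as well).

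The only remaining task is to compute the correction to $\partial_{t}c(\phi)$ coming from $F=-\pm\phi$. Writing $\epsilon=\pm$ for brevity, this correction is
\[
-\epsilon\phi_{s\bar{s}}-2\Re\!\bigl(-\epsilon\phi_{s\bar{i}}\,\overline{\phi_{s\bar{i}}}\bigr)+\phi_{s\bar{i}}\overline{\phi_{s\bar{j}}}\cdot(-\epsilon\phi_{\bar{j}i}),
\]
which at $z=0$ simplifies (using $\phi_{\bar{j}i}=\delta_{ij}$) to
\[
-\epsilon\Bigl(\phi_{s\bar{s}}-2\textstyle\sum_{i}|\phi_{s\bar{i}}|^{2}+\sum_{i}|\phi_{s\bar{i}}|^{2}\Bigr)=-\epsilon\,c(\phi).
\]
Putting the two pieces together gives exactly \ref{eq:parbol eq in thm kx}. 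The statement for a fiber-wise normalised K\"ahler--Einstein weight $\phi_{KE}$ then follows, since in that case $\phi_{KE}$ is a stationary point of the non-normalised flow (its volume form equals $e^{\pm\phi_{KE}}$ and its curvature is K\"ahler--Einstein), so $\partial_{t}c(\phi_{KE})=0$ and the elliptic identity drops out.

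The only mildly delicate step is verifying that the correction really is purely $\partial_{t}$-correction and does not leak into $\Delta_{\omega_{t}^{X}}c(\phi)$; this is where the $z$-independence of $F$ at fixed $\phi$ and the fact that only $F_{s\bar{s}},F_{s\bar{i}},F_{\bar{j}i}$ enter (and no second $z$-derivatives of $F$) must be checked. Once that is noted, the calculation reduces to the three-line verification above, and globality of the identity on $\mathcal{X}$ follows from the globality already established for $c(\phi)$ and $|A_{\phi}|_{\omega_{t}^{X}}^{2}$ in Proposition \ref{pro:A is well-def}.
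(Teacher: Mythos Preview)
Your proposal is correct and follows essentially the same approach as the paper: both reduce to the local computation already done for Theorem \ref{thm:heat eq for c in c-y}, observe that only $\partial_{t}c(\phi)$ acquires an extra contribution from the additional term $-\pm\phi$ in the flow equation, and verify by the same three-term expansion that this contribution equals $-\pm c(\phi)$. The paper simplifies notation by writing the correction in relative dimension one, whereas you carry it out in $n$ dimensions directly, but the computation and the conclusion (including the stationary case for $\phi_{KE}$) are identical.
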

\begin{proof}
To simplify the notation we will only consider the $+K_{X}-$setting,
but the proof in the $-K_{X}$ setting is essentially the same. We
will just indicate the simple modifications of the proof of Theorem
\ref{thm:heat eq for c in c-y} which arise in the present setting. 

Let us first consider the modifications to the calculation of the
$t-$derivative of $c(\phi)$ that arise from the additional term
$-\phi$ appearing in the calculation of the time derivative $\phi_{t},$
since now \[
\frac{\partial}{\partial t}\phi_{t}=\log\det(\phi_{k\bar{l}})-\phi\]
 in local coordinates. To this end we assume to simplify the notation
that $X$ is one-dimensional (but the general argument is essentially
the same).First recall that according to formula \ref{eq:pf of heat eq cy}:

\[
\frac{\partial}{\partial t}c(\phi)=\frac{\partial}{\partial t}\phi_{s\bar{s}}-[(\phi_{s\bar{z}}\overline{\phi_{s\bar{z}}})_{t}\phi_{z\bar{z}}^{-1}-(\phi_{s\bar{z}}\overline{\phi_{s\bar{z}}})\phi_{z\bar{z}}^{-2}\frac{\partial}{\partial t}\phi_{z\bar{z}}].\]
 Hence, the additional contribution referred to above is of the form
\[
B:=(-\phi)_{s\bar{s}}-2\Re(-\phi_{s\bar{z}}\overline{\phi_{s\bar{z}}})+\phi_{s\bar{z}}\overline{\phi_{s\bar{z}}}\phi_{z\bar{z}}^{-2}(-\phi_{z\bar{z}})=\]
\[
=(-\phi_{s\bar{s}})+|\phi_{s\bar{z}}|^{2}\phi_{z\bar{z}}^{-1}-\phi_{s\bar{z}}\overline{\phi_{s\bar{z}}}\phi_{z\bar{z}}^{-1}=-c(\phi).\]
 Hence, the local calculations in the Calabi-Yau case give that \[
\frac{\partial}{\partial t}c(\phi)=\Delta_{\omega_{t}}c(\phi)-c(\phi)+|A_{\phi}|_{\omega_{t}^{X}}^{2}.\]
 Finally, since a normalized Kähler-Einstein weight is stationary
for the non-normalized Kähler-Ricci flow this finishes the proof of
the theorem. 
\end{proof}
The last fiber-wise elliptic equation in the previous corollary (in
the $K_{X}-$setting) was first obtained very recently by Schumacher
\cite{sc}, who used the maximum principle to deduce the following
interesting 
\begin{cor}
\label{cor:pos of ke }Let $\pi:\,\mathcal{X}\rightarrow S$ be a
fibration as in the previous theorem and assume that $K_{\mathcal{X}/S}$
is relatively ample. Then the canonical fiber-wise Kähler-Einstein
weight $\phi_{KE}$ on $K_{\mathcal{X}/S}$ is smooth with semi-positive
curvature form on $\mathcal{X}.$ Moreover, if the Kodaira-Spencer
classes of the fibration are non-trivial for all $s,$ then the curvature
form of $\phi_{KE}$ is\emph{ strictly} positive on $\mathcal{X}.$
\end{cor}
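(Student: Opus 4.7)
The plan is to deduce everything from the fiber-wise elliptic identity
\[
\Delta_{\omega_{KE}^X} c(\phi_{KE}) - c(\phi_{KE}) + |A_{\omega_{KE}}|^2_{\omega_{KE}^X} = 0
\]
that was recorded at the end of Theorem \ref{thm: heat eq in kx-setting}, together with the fact (from the definition of $c(\phi)$) that positivity of $dd^c\phi$ on $\mathcal{X}$ is equivalent, for a weight which is already fiber-wise strictly positive, to positivity of the function $c(\phi)$. Note that $c(\phi_{KE})$ is a priori only defined when $S$ is embedded in $\mathbb{C}$, so for a general base I would reduce to the one-dimensional case by restricting to arbitrary local holomorphic curves through each point, which suffices to detect semi-positivity or strict positivity of the $(1,1)$-form $dd^c\phi_{KE}$.

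For the smoothness statement, I would appeal to the fact that the fiber-wise normalized Kähler-Einstein weight is the unique solution of the (nondegenerate) elliptic Monge-Ampère equation $(dd^c\phi)^n/n! = e^{\phi}$ on each $\mathcal{X}_s$, with smooth coefficients depending smoothly on $s$. The implicit function theorem in appropriate Hölder-Banach spaces (as alluded to in the discussion preceding Theorem \ref{thm: heat eq in kx-setting}; compare \cite{fs}, Theorem 6.3) together with elliptic regularity then gives smoothness of $\phi_{KE}$ on the total space $\mathcal{X}$.

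For the semi-positivity I would apply the weak maximum principle to the elliptic equation above on each fiber $\mathcal{X}_s$. At a point $p\in\mathcal{X}_s$ where $c(\phi_{KE})|_{\mathcal{X}_s}$ attains its minimum we have $\Delta_{\omega_{KE}^X} c(\phi_{KE})(p)\geq 0$, so the identity yields $c(\phi_{KE})(p)\geq |A_{\omega_{KE}}|^2(p)\geq 0$; hence $c(\phi_{KE})\geq 0$ on every fiber, proving semi-positivity on $\mathcal{X}$. For the strict-positivity statement I would use the strong minimum principle for the linear operator $L := \Delta_{\omega_{KE}^X} - 1$ (whose zeroth order coefficient $-1$ has the correct sign for the strong minimum principle to apply to non-negative functions attaining the value zero): the equation rewrites as $L\, c(\phi_{KE}) = -|A_{\omega_{KE}}|^2\leq 0$, so if $c(\phi_{KE})$ vanishes at some interior point of $\mathcal{X}_s$ then $c(\phi_{KE})\equiv 0$ on $\mathcal{X}_s$, which in turn forces $A_{\omega_{KE}}\equiv 0$ on $\mathcal{X}_s$ and hence vanishing of the Kodaira--Spencer class at $s$, contradicting the hypothesis.

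The main obstacle I anticipate is the bookkeeping needed to upgrade the pointwise inequalities $c(\phi_{KE})\geq 0$ (respectively $>0$) on each fiber to a genuine semi-positivity (respectively strict positivity) statement for the $(1,1)$-form $dd^c\phi_{KE}$ on the total space of $\mathcal{X}$ when $\dim S>1$. One has to check that the horizontal lift construction of Proposition \ref{pro:A is well-def} and the identification of $c(\phi)$ with the geodesic-curvature-type quantity work invariantly in every direction on $S$; once this is handled the maximum principle argument on each local curve transfers without change, giving the result.
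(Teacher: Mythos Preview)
Your proposal is correct and follows exactly the approach the paper indicates: the paper does not spell out its own proof of this corollary but attributes the result to Schumacher, noting that he deduced it from the elliptic identity in Theorem~\ref{thm: heat eq in kx-setting} via the maximum principle, which is precisely your argument (weak maximum principle at a fiber-wise minimum for semi-positivity, strong minimum principle for the operator $\Delta_{\omega_{KE}^X}-1$ for strict positivity, and the implicit function theorem for smoothness, as the paper itself records just before Theorem~\ref{thm: heat eq in kx-setting}). Your reduction to a one-dimensional base by restricting to local holomorphic curves is also the device the paper uses throughout.
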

The first part of the corollary was also shown by Tsuji \cite{ts,ts2}
using his iteration. Similarly, by a simple application of the parabolic
maximum principle we deduce the following corollary from the parabolic
equation in the previous theorem.
\begin{cor}
\label{cor:pos along flow in kx-s}(same assumptions as in the previous
theorem). Let $\phi{}_{t}$ evolve according to the Kähler-Ricci flow
in the non-normalized $\pm K_{X}-$setting. If the initial weight
has (semi-) positive curvature form on $\mathcal{X}$ then so has
$\phi{}_{t}$ for all $t.$ More precisely, $(dd^{c}\phi{}_{t})_{x_{z}}>0$
(in all $n+1$ directions) at any point $x_{s}$ in the fiber $\mathcal{X}_{s}$
unless $(dd^{c}\phi_{0})^{n+1}$ and $A_{\phi_{0}}$ vanish identically
on $\mathcal{X}_{s}.$ \end{cor}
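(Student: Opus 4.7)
The plan is to apply the parabolic maximum principle to the evolution equation \eqref{eq:parbol eq in thm kx} from Theorem \ref{thm: heat eq in kx-setting}. Since the assertion is local in $S$ and fiberwise in nature, I would first restrict to a coordinate neighbourhood realising $S$ as a domain in $\C$, so that the function $c(\phi_t)$ of \eqref{eq:def of c} is defined on $\mathcal{X}$. The key preliminary observation is that $c(\phi_t)$ is, up to a positive factor, the Schur complement of the Hermitian matrix representing $dd^c\phi_t$ with respect to its fiberwise block; once the fiberwise restriction $\omega_t^X$ is known to stay strictly positive (as it does along the ordinary K\"ahler-Ricci flow, cf.\ Theorem \ref{thm:perelman etc}), one has the pointwise equivalences $c(\phi_t)(x_s) \geq 0 \Leftrightarrow (dd^c\phi_t)_{x_s} \geq 0$, and similarly with the strict inequalities. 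Thus the corollary reduces to a sign statement for the scalar function $c(\phi_t)$.

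For the semi-positivity part I fix $s \in S$ and view \eqref{eq:parbol eq in thm kx} as a linear parabolic equation on the compact fiber $\mathcal{X}_s$. To treat both settings in a uniform way, I would substitute $v := e^{\mp t} c(\phi_t)$, with the sign chosen so as to cancel the zeroth-order term; this yields the pure parabolic inequality
\[
(\partial_t - \Delta_{\omega_t^X})\, v \;=\; e^{\mp t}\,|A_{\phi_t}|^2_{\omega_t^X} \;\geq\; 0.
\]
Since $\omega_t^X$ is smooth and uniformly elliptic on the compact manifold $\mathcal{X}_s$ by Theorem \ref{thm:perelman etc}, the standard weak parabolic maximum principle gives $\min_{\mathcal{X}_s} v(t,\cdot) \geq \min_{\mathcal{X}_s} v(0,\cdot)$, and hence $c(\phi_0) \geq 0$ on $\mathcal{X}_s$ implies $c(\phi_t) \geq 0$ on $\mathcal{X}_s$ for all $t \geq 0$. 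Letting $s$ vary yields the preservation of semi-positivity on $\mathcal{X}$.

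For the strict alternative, suppose that at some $x_{s_0} \in \mathcal{X}_{s_0}$ and some $t_0 > 0$ one has $c(\phi_{t_0})(x_{s_0}) = 0$, while $c(\phi_0) \geq 0$ on $\mathcal{X}_{s_0}$. Then $v$ attains its minimum value $0$ at the interior point $(t_0, x_{s_0})$ of the parabolic cylinder $\mathcal{X}_{s_0} \times [0, t_0]$. The strong parabolic maximum principle, applicable because the compact fiber has no boundary and the source in the displayed equation is nonnegative, forces $v \equiv 0$, hence $c(\phi_t) \equiv 0$, on the entire cylinder. Feeding this back into the PDE gives $|A_{\phi_t}|^2 \equiv 0$ on $\mathcal{X}_{s_0} \times [0, t_0]$; restricting to $t = 0$ produces $A_{\phi_0} \equiv 0$ on $\mathcal{X}_{s_0}$, and the vanishing $c(\phi_0) \equiv 0$ along $\mathcal{X}_{s_0}$ is, via the Schur complement identification, precisely the vanishing of $(dd^c\phi_0)^{n+1}$ on that fiber. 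This exhausts the exceptional case asserted in the corollary. The only (mild) obstacle in the whole argument is the sign bookkeeping between the two settings, which is absorbed cleanly by the exponential substitution; after that step everything is a direct application of the scalar parabolic maximum principle to a supersolution of the heat equation on a compact manifold.
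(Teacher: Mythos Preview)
Your argument is correct and follows essentially the same route as the paper: both reduce to the sign of $c(\phi_t)$, absorb the zeroth-order term via an exponential weight, and apply the weak and strong parabolic maximum principles on the compact fiber. One small bookkeeping slip: the substitution that kills the $\mp c$ term is $v=e^{\pm t}c(\phi_t)$ (giving source $e^{\pm t}|A_{\phi_t}|^2$), not $e^{\mp t}$; your phrase ``with the sign chosen so as to cancel the zeroth-order term'' shows you have the right idea, so this is just a typo to fix.
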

\begin{proof}
As usual we may assume that $S$ is embedded in $\C.$ Let us start
with the \emph{semi}-positive case where the conclusion follows from
the \emph{weak} maximum principle. Indeed, assume to get a contradiction
that $c(\phi_{t})\geq0$ on $\mathcal{X}$ for $t=0$ but that there
is $(t,s,x)$ such that at $(t,s,x)$ we have $c(\phi_{t})(s,x)<0.$
By optimizing over $(x,t)$ we may also assume that $\frac{\partial(e^{at}c(\phi_{t}))}{\partial t}\leq0,$
$\Delta_{\omega_{t}^{X}}c(\phi_{t})\geq0.$ Then equation \ref{eq:parbol eq in thm kx}
gives \[
0\geq e^{at}(ac(\phi_{t})+\frac{\partial c(\phi_{t})}{\partial t})=e^{at}(\Delta_{\omega_{t}^{X}}c(\phi_{t})-(a\pm1)c(\phi_{t})+|A_{\phi_{t}}|_{\omega_{t}^{X}}^{2})\]
But if $a$ is chosen so that $a\pm1>0,$ then the rhs above is strictly
positive, giving a contradiction. To handle the remaining cases we
invoke the following well-known \emph{strong} maximum principle for
the heat operator (which by standard argument can be reduced to the
corresponding local statement in \cite{p-w}): let $h_{t}\geq0$ satisfy
\[
\frac{\partial h_{t}}{\partial t}\geq\Delta_{g_{t}}h_{t}\,\,\,\mbox{on\,}[0,T]\times X\]
for any smooth family $g_{t}$ of Riemannian metrics. Then either
$h_{t}>0$ for all $t>0$ or $h_{0}\equiv0.$ In our case we set $h_{t}=e^{at}c(\phi_{t})$
with $a=-\pm1$ and conclude that if it is not the case that $c(\phi_{t})>0$
for all $t>0$ then $c(\phi_{0})\equiv0$ and hence \[
\frac{\partial}{\partial t}c(\phi_{t})_{t=0}=|A_{\phi_{0}}|_{\omega_{0}^{X}}^{2}\]
If we now assume, to get a contradiction, that the rhs above is strictly
positive at $x_{0}$ then it follows that there is an $\epsilon>0$
such that $c(\phi_{t})(x_{0})>0$ for $t\in]0,\epsilon[$ i.e. for
such $t$ it is not the case that $c(\phi_{t})\equiv0$ on $X.$ Hence,
as explained above $c(\phi_{t})>0$ on all of $]0,\infty[\times X,$
which yields the desired contradiction. 
\end{proof}
In particular, the previous corollary says that if the fibration $\mathcal{X}$
is infinitesimally non-trivial then the non-normalized Kähler-Ricci
flows instantly makes any semi-positively curved initial weight \emph{strictly}
positive. 

Next we note that integrating the last formula in the previous theorem
immediately gives the following corollary first shown by Fujiki-Schumacher
\cite{fs} (Theorem 7.9).
\begin{cor}
\label{corpush forward formel f=0000F6r w-p i kx}(same assumptions
as in the previous theorem). Let $\phi_{KE}$ be the weight of a smooth
metric on $\pm K_{\mathcal{X}/S}$ which restricts to a normalized
Kähler-Einstein weight on each fiber, then \textup{\[
\pi_{*}((dd^{c}\phi_{KE})^{n+1}/(n+1)!)=\pm\omega_{WP}\]
 on $S,$ where $\pi_{*}$ denotes the fiber integral. In particular,
if $S$ is effectively parametrized (i.e. all Kodaira-Spencer classes
are non-trivial) then $\pm\pi_{*}((dd^{c}\phi_{-})^{n+1}$ and hence
the Weil-Petersson metric $\omega_{WP}$ is a Kähler form on the base
$S.$}\end{cor}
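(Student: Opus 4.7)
The push-forward identity is local on $S$, so I embed a small piece of $S$ in $\mathbb{C}$ and it suffices to evaluate both sides on a single basis vector $\partial/\partial s$. By the same bookkeeping used to interpret the top Deligne pairing in the Calabi-Yau section (formula \eqref{eq:def of c} together with the definition of $\Theta_\phi$),
\[
\pi_*\bigl((dd^c\phi_{KE})^{n+1}/(n+1)!\bigr)(\partial_s,\partial_{\bar s}) \;=\; \int_{\mathcal{X}_s} c(\phi_{KE})\,\omega_{KE}^n/n!,
\]
so the whole problem reduces to computing this fiber integral.

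The last formula in Theorem~\ref{thm: heat eq in kx-setting} asserts that at a fiberwise normalized K\"ahler-Einstein weight,
\[
\Delta_{\omega_{KE}} c(\phi_{KE}) \;-\; (\pm 1)\,c(\phi_{KE}) \;+\; |A_{\phi_{KE}}|^2_{\omega_{KE}} \;=\; 0.
\]
Integrating this identity against $\omega_{KE}^n/n!$ over the closed fiber $\mathcal{X}_s$ kills the Laplacian term by Stokes' theorem, and solving for the $c(\phi_{KE})$-integral yields
\[
\int_{\mathcal{X}_s} c(\phi_{KE})\,\omega_{KE}^n/n! \;=\; \pm\int_{\mathcal{X}_s} |A_{\phi_{KE}}|^2_{\omega_{KE}}\,\omega_{KE}^n/n!,
\]
with the outer sign matching the one in $\pm K_{\mathcal{X}/S}$. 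Since $\omega_{KE}$ restricts to a K\"ahler-Einstein metric on every fiber, Proposition~\ref{pro:horis vf give harmon} tells me that the representative $A_{\phi_{KE}} = -\bar\partial V_{\phi_{KE}}$ is already the unique $\omega_{KE}$-harmonic representative of the Kodaira-Spencer class, so it coincides with $A_{KE}$, and by the defining formula \eqref{eq:general def of wp as harmon} the right-hand side equals $\pm\omega_{WP}(\partial_s,\partial_{\bar s})$. This establishes the push-forward identity.

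For the K\"ahler statement, the push-forward identity makes closedness of $\omega_{WP}$ automatic: $(dd^c\phi_{KE})^{n+1}$ is a smooth closed $(n+1,n+1)$-form on $\mathcal{X}$, and fiber integration along the proper submersion $\pi$ preserves smoothness and commutes with $d$. Strict positivity in the effectively parametrized case is built into the definition \eqref{eq:general def of wp as harmon}: nontriviality of each Kodaira-Spencer class forces its harmonic representative $A_{KE}$ to be nonzero, so $\omega_{WP}(\partial_s,\partial_{\bar s}) = \|A_{KE}\|^2_{\omega_{KE}} > 0$ for every nonzero $\partial/\partial s$. I do not foresee any real obstacle; the only substantive input is the fiberwise elliptic identity from the preceding theorem, which the corollary's preamble flags as precisely the calculation to integrate.
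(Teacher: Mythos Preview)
Your proof is correct and follows exactly the paper's approach: the paper's own argument is the single sentence ``integrating the last formula in the previous theorem immediately gives the following corollary,'' and you have spelled out precisely that integration (Laplacian term vanishes by Stokes, then identify $A_{\phi_{KE}}$ with the harmonic representative via Proposition~\ref{pro:horis vf give harmon}). Your added justification of closedness and strict positivity of $\omega_{WP}$ is also sound and simply makes explicit what the paper leaves implicit.
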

\begin{rem}
\label{rem:never conv}It follows immediately from the previous corollary
that, in the $-K_{X}-$setting, the normalized Kähler-Einstein weight
$\phi_{KE}$\emph{ never }has semi-positive curvature on all of $\mathcal{X}$
if the family is effectively parametrized. Combining this fact with
Corollary \ref{cor:pos along flow in kx-s} shows that the relative
Kähler-Ricci flow in the non-normalized $K_{X}$- setting never converges
in the $L^{1}(\mathcal{X})-$topology for an initial weight $\phi_{0}$
with semi-positive curvature form on an effectively parametrized fibration
$\mathcal{X}.$
\end{rem}

\subsection{\label{sub:Quantization:-the-Bergman KX}Quantization: the Bergman
iteration}

The \emph{(normalized) Bergman iteration} in the $\pm K_{X}-$setting
on \emph{$\mathcal{H}_{\pm K_{X}}.$} is defined precisely as in section
\ref{sec:Quantization:-The-Bergman L}, but using the probability
measure $\mu_{\pm}(\phi)$ in the definition of $Hilb^{(k)}(\phi,\mu_{\pm}(\phi)).$
Similarly, the non-normalized\emph{ Bergman iteration} is defined
in terms of the measure $\mu_{\pm}'.$ The virtue of the non-normalized
setting is that the corresponding Hilbert norms correspond to the
{}``adjoint'' norms appearing in Berndtsson's Theorem \ref{thm:(Berndtsson).-Let-}:

\begin{equation}
Hilb^{(k)}(\phi,\mu_{\mu'_{\pm}})(f,f):=i^{n^{2}}\int_{X}f\wedge\bar{f}e^{-(k\pm1)\phi}:=Hilb_{(k-1)L+K_{X}}(\phi)\label{eq:non-normal hilb as adjoint}\end{equation}
for $L=\pm K_{X}.$ Moreover, they are clearly \emph{decreasing} in
$\phi$ (for $k\geq1)$ and hence the analogue of Proposition \ref{pro:bergman iter decr sup cy}
of the corresponding Bergman iteration holds:
\begin{prop}
\label{pro:bergman it decr sup in kx}Consider the Bergman interation
$\phi_{m}^{(k)}$ in the non-normalized $\pm K_{X}-$setting and assume
that $\phi_{m}^{(k)}\leq\psi{}_{m}^{(k)}.$ Then $\phi_{m+1}^{(k)}\leq\psi{}_{m+1}^{(k)}.$
Moreover, if $d(\phi,\psi)$ denotes the sup norm of $\phi-\psi$
then \[
d(\psi_{m+1},\phi_{m+1})\leq d(\psi_{m+1},\phi_{m+1})(1\pm\frac{1}{k})\]
In particular, the Bergman interation decreases the distance $d(\phi,\psi)$
in the non-normalized $K_{X}-$setting.\end{prop}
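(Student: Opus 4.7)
The plan is to mimic the argument of Proposition \ref{pro:bergman iter decr sup cy}, keeping careful track of the $\phi$-dependence of the measure $\mu'_{\pm}(\phi)=e^{\pm\phi}$. The key observation is that, by the identity (\ref{eq:non-normal hilb as adjoint}), the Hilbert inner product in question is
\[
Hilb^{(k)}(\phi,\mu'_{\pm})(f,f)=\int_{X}|f|^{2}e^{-(k\mp 1)\phi},
\]
so the only $\phi$-dependence sits inside the exponential, and for $k\geq 1$ this exponential is monotone \emph{decreasing} in $\phi$ (the exponent $k\mp 1$ being non-negative, with the value $0$ occurring only in the $+K_{X}$ case at $k=1$).

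First, I would rewrite the Bergman iteration in its extremal form. Exactly as in the proof of Proposition \ref{pro:bergman iter decr sup cy}, combining the definition of $\phi_{m+1}^{(k)}=\phi_{m}^{(k)}+\tfrac{1}{k}\log\rho^{(k)}(\phi_{m}^{(k)})$ with the standard reproducing-kernel identity gives
\[
\phi_{m+1}^{(k)}(x)=\frac{1}{k}\log\sup_{0\neq f\in H^{0}(X,kL)}\frac{|f(x)|^{2}}{N_{k}\int_{X}|f|^{2}e^{-(k\mp 1)\phi_{m}^{(k)}}}.
\]
If $\phi_{m}^{(k)}\leq\psi_{m}^{(k)}$, then the denominator is larger when $\phi_{m}^{(k)}$ is replaced by $\psi_{m}^{(k)}$, so the supremum is smaller; taking $\tfrac{1}{k}\log$ yields $\phi_{m+1}^{(k)}\leq\psi_{m+1}^{(k)}$, which is the first claim.

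Next, I would extract a precise scaling law for the iteration map $T_{k}:=FS^{(k)}\circ Hilb^{(k)}(\,\cdot\,,\mu'_{\pm}(\,\cdot\,))$: for any constant $c\in\R$,
\[
T_{k}(\phi+c)=T_{k}(\phi)+\frac{k\mp 1}{k}\,c,
\]
obtained by pulling the factor $e^{-(k\mp 1)c}$ out of the integral in the supremum formula above. This is exactly the place where the non-normalized setting differs from the Calabi-Yau one, where the analogous factor is $1$.

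Finally, combining these two ingredients is a standard one-line argument. Setting $d:=d(\phi_{m},\psi_{m})$ so that $\psi_{m}\leq\phi_{m}+d$ and $\phi_{m}\leq\psi_{m}+d$, the monotonicity from the first step and the scaling from the second step give
\[
\psi_{m+1}\leq T_{k}(\phi_{m}+d)=\phi_{m+1}+\tfrac{k\mp 1}{k}d,\qquad \phi_{m+1}\leq T_{k}(\psi_{m}+d)=\psi_{m+1}+\tfrac{k\mp 1}{k}d,
\]
hence $d(\phi_{m+1},\psi_{m+1})\leq(1\mp\tfrac{1}{k})\,d(\phi_{m},\psi_{m})$. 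In the $+K_{X}$ case the factor $1-1/k<1$ yields the asserted contraction, while in the $-K_{X}$ case one only gets the factor $1+1/k$. There is no real analytic obstacle — the argument is an elementary variant of the Calabi-Yau proof, and the only subtlety is the bookkeeping of the shift $k\mapsto k\mp 1$ produced by absorbing the $\phi$-dependent volume form $\mu'_{\pm}(\phi)$ into the Hilbert norm.
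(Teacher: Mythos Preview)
Your proof is correct and follows essentially the same approach as the paper: the monotonicity comes from the fact that the non-normalized Hilbert norm $\int|f|^{2}e^{-(k\mp1)\phi}$ is decreasing in $\phi$, and the distance estimate from the explicit scaling law under $\phi\mapsto\phi+c$ (the paper phrases the latter as $\tfrac{1}{k}\log\rho^{(k)}(\phi+c)=\tfrac{1}{k}\log\rho^{(k)}(\phi)\mp\tfrac{c}{k}$, which is equivalent to your $T_{k}(\phi+c)=T_{k}(\phi)+\tfrac{k\mp1}{k}c$). Note that your factor $(1\mp\tfrac{1}{k})$ is the correct one and is what the paper actually uses later (e.g.\ the contraction constant $1-\tfrac{1}{k}$ in the $+K_{X}$ case and $1+\tfrac{1}{k}$ in the $-K_{X}$ case); the $(1\pm\tfrac{1}{k})$ in the displayed statement, as well as the repeated subscript $m+1$ on both sides, are typos.
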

\begin{proof}
Given the discussion preceeding the proposition we just have to prove
the claimed property of the distance $d.$ But this follows directly
from the monotonicity in the first part combined with the fact that
$\log\rho^{(k)}(\phi_{m}+c)/k=\log\rho^{(k)}(\phi_{m})/k-\pm\frac{c}{k},$
which in turn follows from $\mu'_{\pm}(\phi+c):=e^{\pm(\phi+c)}=\mu'_{\pm}(\phi)e^{\pm c}$ 
\end{proof}
On the other hand, the following monotonicity of functionals holds
in the \emph{normalized} setting:
\begin{lem}
\label{lem:mon of f}The functionals \emph{$-I_{\mu_{\pm}}$ and }$\mathcal{L}^{(k)}$
are increasing along the \emph{normalized Bergman iteration on $\mathcal{H}_{\pm K_{X}}.$
Moreover, they are }strictly \emph{increasing at $\phi_{m}^{(k)}$
unless $\phi_{m}^{(k)}$ is stationary (when $k>1$ in the case of
$I_{\mu_{+}})$}\end{lem}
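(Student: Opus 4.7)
The plan is to split into three claims and reduce each to Lemma \ref{lem:monotone along bergman general} whenever possible. First, since $\mu_{\pm}(\phi)$ is a probability measure by its very definition as $e^{\pm\phi}/\int e^{\pm\phi}$, the first bullet of Lemma \ref{lem:monotone along bergman general} applies verbatim and gives that $\mathcal{L}^{(k)}$ is increasing, strictly so away from balanced weights, along the normalized Bergman iteration in either sign.

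For the monotonicity of $-I_{\mu_-}$, I would first check that the functional $I_{\mu_-}(\phi) = -\log \int_X e^{-\phi}$ is concave with respect to the affine structure on the space of smooth weights: indeed, $\phi \mapsto \log \int_X e^{-\phi}$ is convex by H\"older's inequality (it is the log of an integral of exponentials of an affine family of functions). The second bullet of Lemma \ref{lem:monotone along bergman general} then applies and yields strict decrease of $I_{\mu_-}$ along the iteration, that is, strict increase of $-I_{\mu_-}$, away from stationary points.

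For $-I_{\mu_+}$ the functional $I_{\mu_+}(\phi) = \log \int_X e^{\phi}$ is convex rather than concave, so the general lemma cannot be invoked directly; I would argue by hand instead. Substituting the defining relation $\phi_{m+1}^{(k)} = \phi_m^{(k)} + \tfrac{1}{k}\log\rho^{(k)}(\phi_m^{(k)})$ and dividing numerator and denominator by $\int_X e^{\phi_m^{(k)}}$ yields
\[
I_{\mu_+}(\phi_{m+1}^{(k)}) - I_{\mu_+}(\phi_m^{(k)}) = \log \int_X \rho^{(k)}(\phi_m^{(k)})^{1/k}\, d\mu_+(\phi_m^{(k)}).
\]
Since $t \mapsto t^{1/k}$ is concave on $\R_+$ for every $k \geq 1$ and $\rho^{(k)}(\phi) d\mu_+(\phi)$ is a probability measure on $X$, Jensen's inequality bounds the right-hand side above by $\tfrac{1}{k}\log \int_X \rho^{(k)}(\phi_m^{(k)}) d\mu_+(\phi_m^{(k)}) = 0$, which is the desired weak monotonicity of $-I_{\mu_+}$.

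The main point to watch is the asymmetry between the two signs: $I_{\mu_-}$ is genuinely concave on weights and so fits the general template directly, whereas $I_{\mu_+}$ is convex and therefore forces the separate Jensen argument with the concave function $t \mapsto t^{1/k}$. The degeneration at $k=1$ for $I_{\mu_+}$ is intrinsic, because $t^{1/1} = t$ is linear and Jensen produces only an equality; for $k > 1$ the strict concavity of $t^{1/k}$ gives strict inequality unless $\rho^{(k)}(\phi_m^{(k)})$ is $\mu_+(\phi_m^{(k)})$-a.e.\ constant, which by the normalization of the Bergman measure forces $\rho^{(k)}(\phi_m^{(k)}) \equiv 1$, i.e., $\phi_m^{(k)}$ is stationary. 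This recovers exactly the strictness clause in the statement.
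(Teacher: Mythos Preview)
Your proof is correct and follows essentially the same approach as the paper: invoking the general lemma for $\mathcal{L}^{(k)}$ and for $I_{\mu_-}$ (via its affine concavity), and handling $I_{\mu_+}$ by the direct Jensen computation with $t\mapsto t^{1/k}$. Your treatment of the strictness clause at $k>1$ is in fact slightly more explicit than the paper's.
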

\begin{proof}
By the general Lemma \ref{lem:monotone along bergman general} $\mathcal{L}^{(k)}$
is increasing and $I_{-}$ is decreasing under the iteration, since
$\phi\mapsto I_{-_{-}}(\phi)$ is concave with respect to the affine
structure by Jensen's inequality. To show that $I_{+}^{(k)}$ is increasing
in the $K_{X}-$setting just observe that, \[
I_{+}(\phi_{m+1}^{(k)})-I_{+}(\phi_{m}^{(k)}):=\log\frac{\int e^{(\phi_{m+1}^{(k)}-\phi_{m}^{(k)})}e^{\phi_{m}^{(k)}}}{\int e^{\phi_{m}^{(k)}}}=\log\int(\rho^{(k)})^{1/k}\mu(\phi_{m}^{(k)})\leq\]
\[
\leq\log((\int\rho^{(k)}(\phi_{m}^{(k)})\mu)^{1/k})=0,\]
 using Jensen's inequality applied to the concave function $t\mapsto t^{1/k},$
which is strictly concave for $k>1.$ 
\end{proof}

\subsubsection{Convergence of the Bergman iteration at a fixed level $k$}
\begin{thm}
\label{thm:conv of bergman it for kx} The Bergman iteration $\phi_{m}^{(k)}$
at level $k$ converges, when the discrete time $m\rightarrow\infty,$
to a balanced weight $\phi_{\infty}^{(k)}$ in the following settings:
\begin{itemize}
\item The normalized $K_{X}-$setting
\item The normalized $-K_{X}-$setting if it is a priori assumed that there
exists some balanced metric at level $k$ and $H^{0}(TX)=0.$ 
\item The normalized $-K_{X}-$setting for $k$ sufficiently large under
the a assumption that $X$ admits a Kähler-Einstein metric and $H^{0}(TX)=0.$ 
\item The non-normalized $+K_{X}-$setting, where the limiting balanced
weight is the unique normalized one. 
\end{itemize}
\end{thm}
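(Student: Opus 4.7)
The plan is to reduce the first three (normalized) cases to the general convergence criterion of Proposition~\ref{pro:conv of bergman iter at level k general}, whose hypotheses are: (a) $\mu_{\phi}$ normalized, (b) $I_{\mu}$ decreasing along the iteration, (c) coercivity of $\mathcal{F}_{\mu}^{(k)}$, and (d) uniqueness of balanced metrics modulo scaling. Condition (a) is automatic in the normalized $\pm K_{X}$-settings, and (b) is supplied by Lemma~\ref{lem:mon of f}, so the work lies in verifying (c) and (d).

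To establish (c) and (d) simultaneously I would invoke Lemma~\ref{lem:geod convex}: since $-\tfrac{1}{N_k k}\log\det(H)$ is affine along geodesics of the symmetric space $\mathcal{H}^{(k)}=GL(N_k,\C)/U(N_k)$, it suffices to check that $-I_{\pm}\circ FS^{(k)}$ is strictly geodesically convex modulo scaling and to exhibit \emph{some} balanced weight. Along a one-parameter geodesic $H_{t\lambda}$ the weight $FS^{(k)}(H_{t\lambda})$ is pointwise a smooth convex function of $t$, and a direct computation shows that $\phi\mapsto -I_{\pm}(\phi)=\mp\log\int_X e^{\pm\phi}$ is convex on the affine space of weights, strictly so once one quotients by the $\R$-action of scaling; the hypothesis $H^{0}(TX)=0$ in cases (ii)--(iii) is used to rule out the standard obstruction to strict convexity coming from reductive automorphism groups. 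For case (i) existence of a balanced weight at every level $k$ is classical, by directly minimizing the proper functional $-\mathcal{F}_{\mu_+}^{(k)}$ on $\mathcal{H}^{(k)}/\R$ using the exhaustion property of $J^{(k)}$ established in Lemma~\ref{lem:jk is exhaust in bergman}. For case (ii) existence at level $k$ is assumed outright. For case (iii) one appeals to the Keller--Wang construction producing balanced metrics at all sufficiently large levels from a K\"ahler--Einstein metric (cf.\ the references cited in the text). Uniqueness modulo scaling then follows either from the strict convexity argument above or, in the $-K_X$ case, from the Bando--Mabuchi--type uniqueness result for balanced weights proved in \cite{bbgz}.

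For the fourth case (non-normalized $+K_{X}$-setting) I would not use the general proposition. Instead Proposition~\ref{pro:bergman it decr sup in kx} provides the strict contraction
\[
d(\phi_{m+1}^{(k)},\psi_{m+1}^{(k)})\leq\bigl(1-\tfrac{1}{k}\bigr)\,d(\phi_{m}^{(k)},\psi_{m}^{(k)})
\]
on $\mathcal{H}_{+K_X}$ with respect to the sup-norm distance. Thus the iteration is a contraction on the complete metric space of weights (for $k\geq 2$), and the normalized balanced weight, whose existence is inherited from case (i) combined with the scaling renormalization used in the proof of Corollary~\ref{cor:conv of non-normal flow}, is its unique fixed point. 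Hence $\phi_m^{(k)}$ converges to this normalized balanced weight at a geometric rate in sup-norm, and the $\mathcal{C}^\infty$-upgrade follows verbatim from the smoothing estimate \eqref{eq:smoothing prop of bergman it} used inside the proof of Proposition~\ref{pro:conv of bergman iter at level k general}.

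The hard part will be ensuring the existence of a fixed point in the $-K_X$-setting: this is exactly why the hypothesis $H^0(TX)=0$ together with the existence of either a level-$k$ balanced metric (case (ii)) or of a K\"ahler--Einstein metric (case (iii)) enters the statement. Once a critical point of $\mathcal{F}_\mu^{(k)}$ is in hand, the strict geodesic convexity supplied by Lemma~\ref{lem:geod convex}, together with the exhaustion property of $J^{(k)}$, does all the remaining work.
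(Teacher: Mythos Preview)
Your overall architecture is right: reduce the normalized cases to Proposition~\ref{pro:conv of bergman iter at level k general} and supply the convexity/coercivity and uniqueness inputs. But the central step, where you claim that a ``direct computation shows that $\phi\mapsto -I_{\pm}(\phi)$ is convex on the affine space of weights'' and then compose with the pointwise convexity of $t\mapsto FS^{(k)}(H_{t\lambda})$, does not work. First, there is a sign slip: for Lemma~\ref{lem:geod convex} you need $-\mathcal{F}_\mu^{(k)}=\tfrac{1}{kN_k}\log\det+I_\mu\circ FS^{(k)}$ to be geodesically convex, hence you must check convexity of $I_\pm\circ FS^{(k)}$, not of $-I_\pm\circ FS^{(k)}$. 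Second, and more seriously, even with the sign corrected your composition argument fails in the $-K_X$ case: $I_-(\phi)=-\log\int e^{-\phi}$ is concave and \emph{increasing} on the affine space, and composing an increasing concave functional with a pointwise-convex path does not produce a convex function of $t$ (integration over $x$ does not preserve the log-concavity in $t$ you would need). The paper obtains the strict geodesic convexity of $-\mathcal{F}_{\mu_-}^{(k)}$ by invoking Berndtsson's curvature theorem (Theorem~\ref{thm:(Berndtsson).-Let-}) applied to $L=-K_X$, which is exactly what makes this case delicate and is where the hypothesis $H^0(TX)=0$ actually enters. This is not an elementary Jensen computation.

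For the $+K_X$ case the paper does not go through Lemma~\ref{lem:geod convex} at all: coercivity of $-\mathcal{F}_{\mu_+}^{(k)}$ and uniqueness of balanced metrics are obtained directly (via a Jensen-type inequality, as recorded in \cite{bbgz}), so your detour through geodesic convexity plus existence of a minimizer is unnecessary and, as written, somewhat circular (you invoke properness to produce the critical point that you then feed into the lemma to obtain coercivity). For case (iii) the paper again cites \cite{bbgz} for coercivity of $-\mathcal{F}_{\mu_-}^{(k)}$ for $k$ large, transferred from coercivity of $-\mathcal{F}_{-}$ on $\mathcal{H}_L$; your route via Keller--Wang existence of balanced metrics plus Lemma~\ref{lem:geod convex} is a legitimate alternative, but only once the strict convexity is correctly sourced from Berndtsson's theorem. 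Your treatment of case (iv) via the contraction estimate of Proposition~\ref{pro:bergman it decr sup in kx} is fine; the paper gives a different argument (comparing the normalized and non-normalized iterations through the constants $C_m^{(k)}$) but explicitly notes that the contraction argument works as well.
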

\begin{proof}
\emph{Proof of the first point: }By the previous lemma $-I_{\mu}$
is increasing and as shown in \cite{bbgz} $-\mathcal{F}_{\mu}^{(k)}$
is coercive (as follows immediately from Jensen's inequality). Moreover,
as shown in \cite{bbgz} balanced weights are unique modulo scaling
and hence all the convergence criteria in Proposition \ref{pro:conv of bergman iter at level k general}
are hence satisfied. 

\emph{Proof of the second point:} By the previous lemma $-I_{\mu}$
is increasing and as shown in \cite{bbgz} it follows immediately
from Berndtsson's theorem \ref{thm:(Berndtsson).-Let-} applied to
$L=-K_{X}$ that $-\mathcal{F}_{\mu}^{(k)}$ is strictly convex modulo
scaling. Hence, the convergence follows by combining Proposition \ref{pro:conv of bergman iter at level k general}
and Lemma \ref{lem:geod convex}.

\emph{Proof of the third point: }The fact that $-\mathcal{F}_{\mu}^{(k)}$
is coercive was shown in \cite{bbgz} (using the corresponding coercivity
of $-\mathcal{F}_{\mu}$ on $\mathcal{H}_{L}$ ). Given this coercivity
the convergence follows as in the previous point.

\emph{Proof of the fourth point:} let $(\phi')_{m}^{(k)}=\phi{}_{m}^{(k)}+C_{m}^{(k)}$
denote the non-normalized Bergman iteration in the $K_{X}-$setting.
By the definition of the Bergman iteration (compare equation \ref{eq:difference eq}
below): \[
(C_{m+1}^{(k)}-C_{m}^{(k)})=-C_{m}^{(k)}/k-I(\phi{}_{m}^{(k)})/k\]
 where by the first point above $I(\phi{}_{m}^{(k)})\rightarrow I_{\infty},$
when $m\rightarrow\infty.$ Set $D_{m}:=C_{m}^{(k)}+I(\phi{}_{m}^{(k)}).$
Then \[
D_{m+1}=(1-1/k)D_{m}+\mbox{\ensuremath{\epsilon}}_{m},\]
 where $\epsilon_{m}=(I(\phi{}_{m+1}^{(k)})-I(\phi{}_{m}^{(k)}))\rightarrow0$
as $m\rightarrow\infty.$ But then it follows for elementary reasons
that $D_{m}\rightarrow0,$ i.e. $C_{m}^{(k)}\rightarrow-I_{\infty}$
showing that $(\phi')_{m}^{(k)}$ indeed converges and $I_{+}((\phi')_{m}^{(k)})\rightarrow0,$
proving the second point. For completeness we finally show that $D_{m}\rightarrow0.$
Assume for a contradiction that this is not the case. But then $D_{m+1}/D_{m}\rightarrow(1-1/k)$
and hence $D_{m}\leq C_{\delta}(1-1/k)+\delta)^{m}\rightarrow0$ for
$\delta$ sufficiently small, giving a contradiction.
\end{proof}
The convergence in the fourth point above also follows immediately
from the contracting property of the corresponding iteration (compare
the proof of Theorem \ref{thm:uniform conv of balanced} below). We
also note the following direct consequence of Berndtsson's Theorem
\ref{thm:(Berndtsson).-Let-}, using formula \ref{eq:non-normal hilb as adjoint}
in the non-normalized setting. 
\begin{cor}
The Bergman iteration in the non-normalized $\pm K_{X}-$setting preserves
the (semi-)positivity of the curvature of the initial weight. Moreover,
if the fibration $\mathcal{X}$ is assumed infinitesimally non-trivial
then any initial weight on $\pm K_{\mathcal{X}/S}$ on which is semi-positively
curved and strictly positively curved along the fibers of $\mathcal{X}$
becomes strictly positively curved under the iteration.
\end{cor}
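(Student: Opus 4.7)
The plan is to reduce the corollary to Berndtsson's Theorem \ref{thm:(Berndtsson).-Let-}, in its packaged form Corollary \ref{cor:bernd}, by exploiting the fact that the non-normalized iteration is constructed precisely so that no auxiliary measure appears and the twist by the relative canonical bundle is built in. Concretely, identity \eqref{eq:non-normal hilb as adjoint} reads
\[
Hilb^{(k)}(\phi,\mu'_{\pm})(f,f)=i^{n^{2}}\int_{X}f\wedge\bar{f}\, e^{-(k\pm 1)\phi},
\]
and, globalizing over $S$, the tautological identification $k\mathcal{L}=k'\mathcal{L}+K_{\mathcal{X}/S}$ (with $k'=k-1$ when $\mathcal{L}=K_{\mathcal{X}/S}$ and $k'=k+1$ when $\mathcal{L}=-K_{\mathcal{X}/S}$) lets one rewrite this as the canonical Hermitian structure $Hilb_{k'\mathcal{L}+K_{\mathcal{X}/S}}(\phi)$ on the direct image bundle $\pi_{*}(k'\mathcal{L}+K_{\mathcal{X}/S})=\pi_{*}(k\mathcal{L})$. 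This is exactly the setting in which Berndtsson's curvature positivity applies.

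With this reformulation the first assertion is immediate by induction on $m$. If $dd^{c}\phi_m^{(k)}\ge 0$ on $\mathcal{X}$, then applying Corollary \ref{cor:bernd}, with $k$ replaced by $k'$, to the weight $\phi_m^{(k)}$ yields
\[
dd^{c}\phi_{m+1}^{(k)}
=dd^{c}\bigl(FS^{(k)}\circ Hilb_{k'\mathcal{L}+K_{\mathcal{X}/S}}(\phi_m^{(k)})\bigr)\ge 0.
\]
This is the $\pm K_X$-analogue of the corresponding step in Corollary \ref{cor: conserv of pos. along bergman it c-y}, but more direct, since no Weil--Petersson correction term enters: the twist by $K_{\mathcal{X}/S}$ is already absorbed into the Hilbert norm.

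For the strict positivity statement I would invoke the second bullet of Theorem \ref{thm:(Berndtsson).-Let-}: the hypotheses that $dd^{c}\phi_0\ge 0$ on $\mathcal{X}$, that $\phi_0$ is strictly positively curved along the fibers, and that the fibration is infinitesimally non-trivial together imply that $\pi_{*}(k'\mathcal{L}+K_{\mathcal{X}/S})$ is \emph{strictly} Griffiths positive. Running the proof of Corollary \ref{cor:bernd} with strict inequality throughout then yields $dd^{c}\phi_1^{(k)}>0$ on the whole of $\mathcal{X}$. From $m\ge 1$ onwards the first bullet of Theorem \ref{thm:(Berndtsson).-Let-}, which requires only strict positivity of the weight on the total space, supplies the inductive step, and strict positivity of every subsequent $\phi_m^{(k)}$ follows.

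There is no genuine analytic difficulty here; the main thing to verify is that the fiberwise identification $H^{0}(\mathcal{X}_s,k\mathcal{L}_s)\simeq H^{0}(\mathcal{X}_s,k'\mathcal{L}_s+K_{\mathcal{X}_s})$ lifts to an isomorphism of direct image bundles over $S$ compatible with the Hermitian structure, so that Berndtsson's theorem genuinely computes the curvature of the bundle defined by $Hilb^{(k)}(\phi,\mu'_\pm)$. This is automatic from the definition of $K_{\mathcal{X}/S}$, so the proof reduces to the bookkeeping of the twist by $K_{\mathcal{X}/S}$ followed by a direct invocation of Berndtsson's theorem.
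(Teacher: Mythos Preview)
Your proof is correct and follows exactly the approach the paper indicates: the paper states this corollary as ``a direct consequence of Berndtsson's Theorem \ref{thm:(Berndtsson).-Let-}, using formula \ref{eq:non-normal hilb as adjoint}'', and you have spelled out precisely that deduction. One small slip: in your inductive step for $m\ge 1$ you cite ``the first bullet'' of Theorem \ref{thm:(Berndtsson).-Let-}, but the hypothesis you describe (strict positivity of the weight over all of $\mathcal{X}$ giving strict Griffiths positivity) is actually the first alternative in the \emph{second} bullet; this is only a referencing issue and does not affect the argument.
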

Combining the previous corollary and Theorem \ref{thm:conv of bergman it for kx}
now gives the following:
\begin{cor}
\label{cor:pos of semi-bal when kx amp}Let $\pi:\mathcal{\, X}\rightarrow S$
be a proper holomorphic submersion with $K_{\mathcal{X}/S}$ relatively
ample. Let $\phi^{(k)}$ be the weight on $K_{\mathcal{X}/S}$ obtained
by requiring that its restriction to any fiber is the unique normalized
balanced weight at level $k,$ i.e $\int_{\mathcal{X}_{s}}e^{\phi^{(k)}}=1.$
Then $\phi^{(k)}$ is smooth with semi-positive curvature form. Moreover,
if the fibration $\mathcal{X}$ is assumed infinitesimally non-trivial
then $\phi^{(k)}$ is strictly positively curved. \end{cor}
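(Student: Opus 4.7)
The plan is to characterize $\phi^{(k)}$ as a fixed point of the non-normalized Bergman iteration in the $+K_{X}$-setting and then apply Berndtsson's curvature theorem to this fixed-point equation. By the fourth point of Theorem \ref{thm:conv of bergman it for kx}, the non-normalized iteration converges (fiberwise and locally uniformly in $s$) to the unique normalized balanced weight, so the limit $\phi^{(k)}$, which by construction satisfies $\int_{\mathcal{X}_s} e^{\phi^{(k)}} = 1$, is itself a fixed point:
\[
\phi^{(k)} \;=\; FS^{(k)} \circ Hilb^{(k)}(\phi^{(k)},\mu'_{+}).
\]
Smoothness of $\phi^{(k)}$ on $\mathcal{X}$ then comes for free, since Bergman weights at level $k$ vary in a finite-dimensional family, both $Hilb^{(k)}$ and $FS^{(k)}$ depend smoothly on the underlying weight and on $s$, and the iteration converges smoothly in the base parameter.

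For the strict positivity claim under the infinitesimal non-triviality hypothesis, I would apply Corollary \ref{cor:bernd} directly to the right-hand side of the fixed-point equation. The key observation is that $\phi^{(k)}$ restricts on each fiber to a Bergman weight of level $k$ on $K_{\mathcal{X}_s}$ and is therefore strictly positively curved along the fibers. Together with the infinitesimal non-triviality assumption, this matches the second alternative in the strict statement of Corollary \ref{cor:bernd} (coming from the second point of Berndtsson's Theorem \ref{thm:(Berndtsson).-Let-}), yielding that $FS^{(k)} \circ Hilb^{(k)}(\phi^{(k)},\mu'_{+})$, and hence $\phi^{(k)}$ itself, is strictly positively curved on the total space $\mathcal{X}$.

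For the unconditional semi-positivity claim, the analogous direct argument is unavailable, since the non-strict version of Corollary \ref{cor:bernd} requires semi-positivity of the curvature on $\mathcal{X}$ as input, which is precisely what we want to conclude. I would instead use the preceding corollary on the Bergman iteration, which asserts that the non-normalized iteration \emph{preserves} semi-positivity of the curvature on $\mathcal{X}$: if one starts the iteration from an initial weight $\phi_{0} \in \mathcal{H}_{K_{\mathcal{X}/S}/S}$ whose curvature is semi-positive on the total space, then every iterate $\phi_{m}^{(k)}$ is semi-positively curved, and the smooth convergence $\phi_{m}^{(k)} \to \phi^{(k)}$ transfers this to the limit. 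The main obstacle -- and the most delicate point of the argument -- is the existence of such an initial $\phi_{0}$, since $K_{\mathcal{X}/S}$ is only relatively ample and may admit no globally semi-positively curved Hermitian metric. A clean way around this is a case-division on $S$: on the open locus where the Kodaira--Spencer class is nontrivial, the previous paragraph already delivers strict positivity of $\phi^{(k)}$, while on its closed complement the fibration is infinitesimally trivial on an open neighborhood, hence locally holomorphically trivial by Kuranishi rigidity, and $\phi^{(k)}$ is locally a pullback of the balanced weight on the fiber and therefore trivially semi-positive. Semi-positivity then extends to all of $\mathcal{X}$ by continuity of $\phi^{(k)}$.
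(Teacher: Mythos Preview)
Your argument has a genuine circularity. The strict form of Corollary \ref{cor:bernd}, like the second point of Theorem \ref{thm:(Berndtsson).-Let-} it relies on, is stated under the standing hypothesis that the weight has \emph{semi-positive curvature on the total space} $\mathcal{X}$; the alternative ``strictly positive along the fibers and the fibration infinitesimally non-trivial'' is an additional condition that upgrades semi-positivity of the direct image to strict positivity, not a replacement for the global semi-positivity assumption. Thus you cannot apply Corollary \ref{cor:bernd} to $\phi^{(k)}$ and deduce strict positivity on $\mathcal{X}$ before you know that $dd^{c}\phi^{(k)}\geq 0$ on $\mathcal{X}$. This also undermines the first branch of your case division for semi-positivity, which rests on the same direct application. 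The second branch is problematic as well: vanishing of the Kodaira--Spencer class on a closed set does not place you in an open neighborhood where it vanishes, and even identical vanishing of the Kodaira--Spencer map on an open set does not by itself yield that $\phi^{(k)}$ is a pullback there.

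The paper closes this gap with a much simpler device that you overlooked. Since (semi-)positivity is a local statement on the base, one may assume $S$ is a domain in $\C$. Relative ampleness gives a smooth weight $\phi_{0}$ with fiberwise positive curvature, and then adding a large multiple of $|s|^{2}$ (a weight pulled back from the base) makes $dd^{c}\phi_{0}>0$ on all of $\mathcal{X}$. Now the non-normalized Bergman iteration, which preserves semi-positivity on the total space by the corollary preceding this one, produces a sequence $\phi_{m}^{(k)}$ of semi-positively curved weights converging to $\phi^{(k)}$, and semi-positivity passes to the limit. Only \emph{after} this is established does one feed $\phi^{(k)}$ itself back into the iteration: being semi-positive on $\mathcal{X}$, strictly positive along the fibers, and fixed under the iteration, it must be strictly positive on $\mathcal{X}$ when the fibration is infinitesimally non-trivial. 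Your fixed-point idea for the strict case is the right endgame, but it has to come second, not first.
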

\begin{proof}
Since positivity  and smoothness are local notions it is enough to
prove the corollary when when $S$ is embedded in $\C.$

\emph{Smoothness:} By definition $\phi^{(k)}=FS^{(k)}(H^{(k})$ where
$H^{(k)}$ is an element in the finite-dimensional smooth manifold
$\mathcal{H}^{(k)}$ uniquely determined by $G^{(k)}(H^{(k)},s)=0,$
\cite{bbgz} where $G^{(k)}$ is the smooth map defined by \[
G^{(k)}(H^{(k)},s):=(T^{(k)}-I,I_{+}\circ FS^{(k)})\in\mathcal{H}^{(k)}\times\R.\]
Moreover, as shown in \cite{bbgz} the linearization of $T^{(k)}-I$
is invertible modulo scaling (since it represents the differential
of a functional on $H^{(k)}$ which is strictly convex modulo scaling).
Hence, the claimed smoothness follows from the implicit function theorem.

\emph{Positivity:} Since $K_{\mathcal{X}/S}$ is assumed relatively
ample it admits a smooth weight $\phi_{0},$ which has fiber-wise
positive curvature form. After adding a sufficiently large multiple
of the pull-back from the base of $|s|^{2}$ we may assume that $\phi_{0}$
has positive curvature over $\mathcal{X}.$ By the last point of the
previous theorem the Bergman iteration $\phi_{m}^{(k)}$ in the non-normalized
$K_{X}-$setting with initial weight $\phi_{0}$ yields a sequence
of weights on $K_{\mathcal{X}/S}$ converging, when $m\rightarrow\infty,$
uniformly to the unique normalized balanced weight $\phi^{(k)}$ at
level $k.$ As a consequence $dd^{c}\phi^{(k)}\geq0$ on $\mathcal{X}.$
Moreover, if the fibration $\mathcal{X}$ is assumed infinitesimally
non-trivial the previous corollary shows that applying the Bergman
iteration to $\phi^{(k)}$ yields a strictly positively curved metric.
But since $\phi^{(k)}$ is fixed under the iteration this finishes
the proof of the corollary.\end{proof}
\begin{cor}
Let $\pi:\mathcal{\, X}\rightarrow S$ be the universal curve of the
Teichmuller space of complex curves of a genus $g\geq2.$ Fix a positive
integer $k$ (for $g=2$ we assume that $k\geq2).$ Under the natural
isomorphism \[
(T^{1,0}S)^{*}=\pi_{*}(2K_{\mathcal{X}/S})\]
the fiber-wise normalized balanced weight $\phi^{(k)}$ on $K_{\mathcal{X}/S}$
at level $k$ (appearing in the previous corollary) induces an Hermitian
metric $\omega^{(k)}$ on $S$ with a curvature which is dually Nakano
positive.. Moreover, when $k\rightarrow\infty$ the metric $\omega^{(k)}$
converges towards the Weil-Petersson metric $\omega_{WP}$ point-wise
on $S.$\end{cor}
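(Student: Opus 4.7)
The plan is to combine the strict positivity of the curvature of $\phi^{(k)}$ given by Corollary \ref{cor:pos of semi-bal when kx amp} with Berndtsson's Theorem \ref{thm:(Berndtsson).-Let-}, and then to deduce the large $k$ asymptotics from the $\mathcal{C}^{\infty}$-convergence of balanced weights to the K\"ahler-Einstein weight.

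First, the universal curve $\pi:\mathcal{X}\to S$ is infinitesimally non-trivial in the sense required by Corollary \ref{cor:pos of semi-bal when kx amp}, since tangent vectors at $s\in S$ are by definition identified with non-zero Kodaira-Spencer classes in $H^{0,1}(T\mathcal{X}_{s})$; moreover $K_{\mathcal{X}/S}$ is relatively ample for $g\geq 2$ (with $k\geq 2$ needed in genus two to guarantee $kK_{X}$ is very ample, so that the level-$k$ balanced weight is defined). That corollary therefore applies and gives that $\phi^{(k)}$ is smooth on $\mathcal{X}$ with \emph{strictly} positive curvature form $dd^{c}\phi^{(k)}$ on the total space. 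Applying the second part of Theorem \ref{thm:(Berndtsson).-Let-} with $\mathcal{L}=K_{\mathcal{X}/S}$ and weight $\phi^{(k)}$ then shows that the direct image bundle
\[
\pi_{*}(K_{\mathcal{X}/S}+K_{\mathcal{X}/S})=\pi_{*}(2K_{\mathcal{X}/S}),
\]
equipped with the fiberwise Hermitian metric $Hilb_{2K_{X}}(\phi^{(k)})$, has strictly positive curvature in the sense of Nakano. Under the natural isomorphism $(T^{1,0}S)^{*}\cong\pi_{*}(2K_{\mathcal{X}/S})$, this Hermitian metric is, by the very definition of $\omega^{(k)}$, the dual metric of $\omega^{(k)}$; Nakano positivity of the metric on the cotangent side is exactly the statement that $\omega^{(k)}$ is dually Nakano positive on $T^{1,0}S$, giving the first conclusion.

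For the large $k$ asymptotics, I would argue as in Corollary \ref{cor:conv of balanced etc cy} but in the $+K_{X}$-setting: the variational characterization of balanced weights from \cite{bbgz}, combined with the $\mathcal{C}^{\infty}$-refinement of the convergence of balanced to K\"ahler-Einstein metrics from \cite{ke,wa}, yields $\phi^{(k)}\to\phi_{KE}$ in the $\mathcal{C}^{\infty}$-topology on each fiber, locally uniformly in $s$. Consequently $Hilb_{2K_{X}}(\phi^{(k)})\to Hilb_{2K_{X}}(\phi_{KE})$ pointwise on $S$, and dualizing gives $\omega^{(k)}\to\omega^{(\infty)}$ pointwise. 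In relative dimension one the normalization $e^{-\phi_{KE}}=\omega_{KE}^{-1}$ means that $Hilb_{2K_{X}}(\phi_{KE})$ is the $L^{2}$-norm on quadratic differentials computed with respect to $\omega_{KE}$. Under Serre duality, its dual on $H^{0,1}(T\mathcal{X}_{s})$ is precisely the $L^{2}$-norm defined by $\omega_{KE}$, which by Proposition \ref{pro:horis vf give harmon} and the definition \ref{eq:general def of wp as harmon} is the Weil-Petersson norm. Hence $\omega^{(\infty)}=\omega_{WP}$, finishing the proof.

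The main technical obstacle will be nailing down the $\mathcal{C}^{\infty}$-convergence $\phi^{(k)}\to\phi_{KE}$ in the canonically polarized setting (parallel to, but formally distinct from, the Calabi-Yau case of Corollary \ref{cor:conv of balanced etc cy}), together with the slightly delicate bookkeeping needed to verify that the dualization of the Berndtsson-positive Hilbert norm reproduces precisely the classical Weil-Petersson metric defined via harmonic Kodaira-Spencer representatives.
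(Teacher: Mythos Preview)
Your overall strategy matches the paper's: use Corollary \ref{cor:pos of semi-bal when kx amp} to get strict positivity of $dd^{c}\phi^{(k)}$, feed this into Berndtsson's theorem for $\mathcal{L}=K_{\mathcal{X}/S}$ to obtain positivity of $\pi_{*}(2K_{\mathcal{X}/S})=(T^{1,0}S)^{*}$, and then pass to the limit in $k$ to recover $\omega_{WP}$.

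There is one genuine slip. You invoke the \emph{second} part of Theorem \ref{thm:(Berndtsson).-Let-} and conclude Nakano positivity, but as stated in this paper that part only yields \emph{Griffiths} positivity; it is the \emph{first} part that gives Nakano (semi-)positivity. The paper accordingly cites the first point. Since $\phi^{(k)}$ has (strictly) positive curvature, the hypothesis of the first point is satisfied and one obtains Nakano positivity of the direct image, which is precisely the dual Nakano positivity claimed for $\omega^{(k)}$. So your argument is correct once you swap the citation.

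For the convergence $\omega^{(k)}\to\omega_{WP}$ your route via $\mathcal{C}^{\infty}$-convergence of $\phi^{(k)}\to\phi_{KE}$ (available in the $+K_{X}$-analogue of Corollary \ref{cor:conv of balanced etc cy}) is valid and your identification of the limiting $Hilb_{2K_{X}}(\phi_{KE})$ with the Weil--Petersson norm via Serre duality is more explicit than the paper's. The paper takes a slightly more economical path: it only needs $e^{-\phi^{(k)}}\to e^{-\phi_{KE}}$ in $L^{1}_{loc}$, which it obtains either from the $L^{1}$-convergence of $\phi^{(k)}$ in Theorem \ref{thm:conv of bergman it for kx} together with a uniform bound on $J(\phi^{(k)})$ from \cite{bbgz}, or directly from the uniform convergence of Theorem \ref{thm:uniform conv of balanced}. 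Either approach suffices; yours is cleaner conceptually but relies on the heavier $\mathcal{C}^{\infty}$-input.
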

\begin{proof}
As is classical the assumptions on $k$ ensure that $K_{\mathcal{X}/S}$
is very ample. By the previous corollary $\phi^{(k)}$ is a smooth
weight on $K_{\mathcal{X}/S}\rightarrow\mathcal{X}$ with strictly
positive curvature and hence the $L^{2}-$metric on the direct image
bundle $\pi_{*}(\mathcal{L}+K_{\mathcal{X}/S})$ (with $\mathcal{L}=K_{\mathcal{X}/S})$
induced by $\phi^{(k)}$ has, according to the first point in Theorem
\ref{thm:(Berndtsson).-Let-}, a curvature which is positive in the
sense of Nakano. Since, $T^{1,0}S_{|s}=H^{1}(T^{1,0}\mathcal{X}_{s})\cong H^{0}(2K_{\mathcal{X}_{s}})^{*}$
this proves the first statement. To prove the point-wise convergence
on $S$ of $\omega^{(k)}$ towards $\omega_{WP}$ it is enough to
prove that \[
e^{-\phi^{(k)}}\rightarrow e^{-\phi_{KE}}\]
in $L_{loc}^{1}(X)$ for $X=\mathcal{X}_{s}$ (since, by definition,
it implies the point-wise convergence of the corresponding Hermitian
metrics on $\pi_{*}(\mathcal{L}+K_{\mathcal{X}/S})).$ But this convergence
follows from the $L^{1}$ convergence of $\phi^{(k)}$ towards $\phi_{KE}$
(Theorem \ref{thm:conv of bergman it for kx}) combined with the fact
that $J(\phi^{(k)})$ is uniformly bounded, as shown in \cite{bbgz}
(see Lemma 6.4 therein). Alternatively, it follows immediately from
the uniform convergence in Theorem \ref{thm:uniform conv of balanced}
below.
\end{proof}
The convergence in the previous corollary should be compared with
the approximation results for the Weil-Petterson metric for moduli
spaces of higher dimensional manifolds recently obtained by Keller-Lukic
\cite{k-l}. The approximating \emph{Kähler} metrics $\omega'_{k}$
in \cite{k-l} are related to different balanced metrics, namely those
defined wrt Donaldson'a original setting in \cite{do1} (where $\mu(\phi)=MA(\phi)).$

\subsubsection{Convergence towards the Kähler-Ricci flow}
\begin{thm}
\label{thm:conv of bergman to ricci}The following convergence results
hold in all settings introduced in the beginning of section \ref{sec:The-(anti-)-canonical}
(i.e. in the (non-) normalized $\pm K_{X}-$settings). Fix a smooth
and strictly psh weight initial weight $\phi_{0}$ on $\pm K_{X}$
and consider the corresponding Bergman iteration $\phi_{m}^{(k)}$
at level $k$ and discrete time $m,$ as well as the corresponding
Kähler Ricci flow $\phi_{t}.$ Then there is a constant $A$ such
that \begin{equation}
\sup_{X}|\phi{}_{m}^{(k)}-\phi{}_{m/k}|\leq Am/k^{2}\label{eq:statement thm conv bergman kx}\end{equation}
 uniformly in $(m,k)$ satisfying $m/k\leq T$ (in the $K_{X}-$setting
$A$ is independent of $T).$ In particular, if $m_{k}$ is a sequence
such that $m_{k}/k\rightarrow t,$ then $\phi_{m_{k}}^{(k)}\rightarrow\phi(t)$
uniformly on $X$ and \[
dd^{c}\phi_{m_{k}}^{(k)}\rightarrow\omega_{t}\]
 on $X$ in the sense of currents, where $\omega_{t}$ evolves according
to the corresponding Kähler-Ricci flow \ref{eq:k-r flow for k metrics in kx}.
The corresponding result also holds for the corresponding non-normalized
flows and in the relative setting, where the convergence is locally
uniform with respect to the base parameter $s.$ \end{thm}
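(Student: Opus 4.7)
The plan is to adapt the proof of Theorem \ref{thm:conv of bergman iter to ricci cy} from the Calabi-Yau setting, substituting the isometry property of the Bergman iteration (Proposition \ref{pro:bergman iter decr sup cy}) with its $\pm K_X$-analogue in Proposition \ref{pro:bergman it decr sup in kx}, whose contraction/expansion factor is $(1\pm 1/k)$. I would carry out the main argument in the non-normalized $\pm K_X$-setting and then transfer it to the normalized setting as in Corollary \ref{cor:conv of non-normal flow}.

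\emph{Step 1 (discretization).} Set $\psi_{k,m} := \phi_{m/k}$ where $\phi_t$ is the non-normalized K\"ahler-Ricci flow. A Taylor expansion in $t$ together with uniform $C^{l}$ and higher time-derivative bounds supplied by Theorem \ref{thm:perelman etc} (extended to all time-derivatives by a standard parabolic bootstrap, as in Step 1 of the Calabi-Yau proof) gives
\begin{equation*}
\psi_{k,m+1} - \psi_{k,m} = \tfrac{1}{k}\log\frac{MA(\psi_{k,m})}{\mu'_{\pm}(\psi_{k,m})} + O(1/k^{2}).
\end{equation*}
Combining this with the Bouche-Tian-Zelditch asymptotics of Proposition \ref{pro:bouche tian-1}, whose hypotheses are met uniformly along the flow for the $\phi$-dependent measure $\mu'_{\pm}(\phi) = e^{\pm\phi}$, yields
\begin{equation*}
\psi_{k,m+1} - \psi_{k,m} = F^{(k)}(\psi_{k,m}) + O(1/k^{2}), \qquad F^{(k)}(\phi) := \tfrac{1}{k}\log\rho^{(k)}(\phi),
\end{equation*}
uniformly in $(m,k)$ with $m/k \leq T$.

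\emph{Step 2 (induction on $m$).} Setting $d_m := \sup_X|\phi_m^{(k)} - \psi_{k,m}|$, Proposition \ref{pro:bergman it decr sup in kx} combined with the error estimate of Step 1 gives the elementary recursion
\begin{equation*}
d_{m+1} \leq (1\pm 1/k)\,d_m + C/k^{2}, \qquad d_0 = 0.
\end{equation*}
In the $+K_X$ case the contraction factor $(1-1/k)$ readily yields $d_m \leq Cm/k^{2}$ with $C$ independent of $T$. In the $-K_X$ case the expansion factor $(1+1/k)$ only allows the Gronwall-type bound $d_m \leq (1+1/k)^{m}\cdot Cm/k^{2} \leq e^{T}\cdot Cm/k^{2} = A\cdot m/k^{2}$, valid on intervals $m/k \leq T$. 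This exponential amplification in the $-K_X$ case is the main analytic obstacle and explains why $A$ must depend on $T$ there.

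\emph{Step 3 (normalization and consequences).} Passing to the normalized setting is done by tracking the time-dependent additive constant relating the two flows, exactly as in the proofs of Corollary \ref{cor:conv of non-normal flow} (continuous side) and the fourth point of Theorem \ref{thm:conv of bergman it for kx} (discrete side): the differences $C_t$ and $C_m^{(k)}$ satisfy linear ODEs/difference equations forced by $I_{\pm}(\phi_t)$ and $I_{\pm}(\phi_m^{(k)})$, which are close by Step 2, so a Gronwall-type argument transfers the bound to the normalized iterations without loss of order. The distributional convergence $dd^c\phi_{m_k}^{(k)} \to \omega_t$ then follows by applying $dd^c$ to the uniform $C^{0}$-convergence of weights, and the relative statement holds because every constant above is locally uniform in the base parameter $s$ whenever the initial weight $\phi_0$ is smooth in $s$.
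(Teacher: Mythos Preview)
Your proposal is correct and follows essentially the same route as the paper: work first in the non-normalized $\pm K_X$-setting, use the discretization estimate of Step~1 together with the Lipschitz constant $(1\pm 1/k)$ from Proposition~\ref{pro:bergman it decr sup in kx} to run the induction of Step~2 (yielding the extra factor $(1+1/k)^m\leq e^T$ in the $-K_X$ case), and then transfer to the normalized setting by comparing the ODE for $C_t$ with the difference equation for $C_m^{(k)}$. One small refinement: to justify that the constant $A$ is independent of $T$ in the $+K_X$ case you need the Step~1 error $O(1/k^2)$ to be uniform for all $t\in[0,\infty)$, which follows from the convergence of the \emph{non-normalized} flow in Corollary~\ref{cor:conv of non-normal flow} rather than directly from Theorem~\ref{thm:perelman etc}.
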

\begin{proof}
In the case of the non-normalized $K_{X}-$setting (denoted by primed
objects) the proof of Theorem \ref{thm:conv of bergman iter to ricci cy}
carries over essentially verbatim, thanks to the last statement in
Proposition \ref{pro:bergman it decr sup in kx} and Corollary \ref{cor:conv of non-normal flow}
which gives the uniformity wrt $T\in[0,\infty].$To handle the non-normalized
$-K_{X}-$setting we need to modify the previous argument slightly.
More precisely, we will prove that \begin{equation}
\sup_{X}|\phi{}_{m}^{(k)}-\phi{}_{m/k}|\leq A(1+\frac{1}{k})^{m}m/k^{2}\label{eq:statement thm conv bergman kx}\end{equation}
Accepting this for the moment the claimed convergence when $m_{k}/k\rightarrow t$
follows using that \[
(1+\frac{1}{k})^{m}=((1+\frac{1}{k})^{k})^{m/k}\leq e^{m/k}\leq e^{T},\]
 when $m/k\leq T.$ To prove \ref{eq:statement thm conv bergman kx}
first observe that Step 1 in the proof of Theorem \ref{thm:conv of bergman iter to ricci cy}
still applies for $(m,k)$ such that $m/k\leq T$ (using Proposition
\ref{pro:bouche tian-1} applied to the non-normalized $-K_{X}-$
setting). In other words, there is a constant $A$ (depending on $T)$
such that \[
\sup_{X}|\psi_{k,m+1}-(\psi_{k,m}+F^{(k)}(\psi_{k,m})|\leq A(1/k^{2})\]
 for all $(m,k)$ such that $m/k\leq T.$ Now we fix the integer $k$
and assume as an induction hypothesis that \ref{eq:statement thm conv bergman kx}
holds for $m$ with $A$ the constant in the previous inequality.
By Proposition \ref{pro:bergman iter decr sup cy}, \[
\sup_{X}|(\psi_{k,m}+F^{(k)}(\psi_{k,m}))-(\phi_{m}^{(k)}+F^{(k)}(\phi_{m}^{(k)})|\leq\sup_{X}|(\psi_{k,m}-\phi_{m}^{(k)}|(1+\frac{1}{k})\leq\]
\[
\leq(A(1+\frac{1}{k})^{m}m/k^{2})(1+\frac{1}{k})\]
with the same constant $A$ as above, using the induction hypothesis
in the last step. Combining this estimate with the previous inequality
gives \[
\sup_{X}|(\psi_{k,m+1}-\phi_{m+1}^{(k)}|\leq A(1+\frac{1}{k})^{m+1}m/k^{2}+A/k^{2}.\]
But using that $1\leq(1+\frac{1}{k})^{m+1}$ in the last term above
proves the induction step and hence finishes the proof of the estimate
\ref{thm:conv of bergman iter to ricci cy}. 

To treat the Kähler-Ricci flows $\phi_{t}$ in the normalized settings
we write \[
\phi_{t}'=\phi_{t}+C_{t},\]
 where $C_{t}$ is a constant for each $t.$ Then \begin{equation}
\frac{\partial C_{t}}{\partial t}=-(I_{\pm}(\phi'_{t})\label{eq:diff eq in pf bergman it kx}\end{equation}
 Indeed, by the definition of the flow $\phi'_{t}$ and $\phi_{t}$
we have \[
\begin{array}{rcl}
\frac{\partial\phi'_{t}}{\partial t} & = & \log(MA(\phi'_{t})-\pm\phi'_{t}\\
\frac{\partial\phi{}_{t}}{\partial t} & = & \log(MA(\phi{}_{t})-\pm\phi_{t}+\pm(I_{\pm}(\phi{}_{t})\end{array}\]
By scale invariance we may as well replace $\phi_{t}$ with $\phi_{t}'$
in the rhs of the second equation above and hence subtracting the
second equation from the first one proves \ref{eq:diff eq in pf bergman it kx}. 

Similarly, writing \[
(\phi')_{m}^{(k)}=\phi{}_{m}^{(k)}+C_{m}^{(k)}\]
we obtain the the following difference equation, using that $\phi\mapsto\rho^{(k)}(\phi),$
defined with respect to $\mu_{\pm}$ is scale invariant: \begin{equation}
C_{m+1}^{(k)}-C_{m}^{(k)}=-\frac{1}{k}I_{\pm}((\phi')_{m}^{(k)})\label{eq:difference eq}\end{equation}
Now, as explained above the estimate \ref{eq:statement thm conv bergman kx}
, holds for the {}``primed'' objects and hence by the scaling equivariance
of $I_{\pm}$ \begin{equation}
|I_{\pm}(\phi'_{m/k})-I_{\pm}((\phi')_{m}^{(k)})|\leq Am/k^{2}\label{eq:conv of scaled int}\end{equation}
 A simple version of the argument given in the proof of Theorem \ref{thm:conv of bergman to ricci}
now shows, by comparing the differential equation \ref{eq:diff eq in pf bergman it kx}
with the difference equation \ref{eq:difference eq} and using \ref{eq:conv of scaled int}
, that \[
|C_{m}^{(k)}-C_{m/k}|\leq Bm/k^{2}\]
 for a uniform constant $B.$ All in all this hence finishes the proof
of the theorem.
\end{proof}
We also have the following analogue of Cor \ref{cor:conv of balanced etc cy}:
\begin{cor}
For a fixed initial data $\phi_{0}=\phi_{0}^{(k)}\in\mathcal{H}_{\pm K_{X}}$
the following convergence results hold for the Bergman iteration $\phi_{m}^{(k)}$
in the normalized $\pm K_{X}-$setting (in the $-K_{X}-$setting it
is assumed that $H^{0}(TX)=0$ and $X$ a priori admits a Kähler-Einstein
metric):
\begin{itemize}
\item For any sequence $m_{k}$ such that $m_{k}/k\rightarrow\infty$ the
convergence $\phi_{m_{k}}^{(k)}\rightarrow\phi_{\infty}$ holds in
the $L^{1}-$ topology on $X.$ 
\item The balanced weights $\phi_{\infty}^{(k)}:=\lim_{m\rightarrow\infty}\phi_{m}^{(k)}$
at level $k$ converge, when $k\rightarrow\infty,$ in the $\mathcal{C}^{\infty}-$topology,
to the weight $\phi_{\infty}$ which is the large time limit of the
corresponding Kähler-Ricci flow. 
\end{itemize}
Moreover, the convergence in the second point also holds in the non-normalized
$K_{X}-$setting, where the limit $\phi_{\infty}$ coincides with
the canonical Kähler-Einstein weight $\phi_{KE}.$ In the relative
case all convergence results hold fiberwise locally uniformly with
respect to the base parameter $s.$\end{cor}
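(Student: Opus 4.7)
The plan is to follow essentially verbatim the strategy used in the proof of Corollary \ref{cor:conv of balanced etc cy} in the Calabi-Yau setting, with $\mu$ replaced by $\mu_{\pm}(\phi)$ throughout. The three inputs are: (a) the monotonicity of $-I_{\mu_{\pm}}$ and $\mathcal{L}^{(k)}$ along the normalized Bergman iteration (Lemma \ref{lem:mon of f}); (b) the convergence of the Bergman iteration to the Kähler-Ricci flow in the double scaling limit (Theorem \ref{thm:conv of bergman to ricci}); and (c) the convergence of the Kähler-Ricci flow itself (Theorem \ref{thm:perelman etc}), combined with the variational characterization of $\phi_{\infty}$ from \cite{bbgz}.

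For the first point, fix an auxiliary sequence $m'_{k}\leq m_{k}$ with $m'_{k}/k\to t$ for some finite $t>0$. By monotonicity of $-I_{\mu_{\pm}}$ along the iteration we have $I_{\pm}(\phi_{m_{k}}^{(k)})\leq I_{\pm}(\phi_{m'_{k}}^{(k)})$, and similarly $\mathcal{L}^{(k)}(\phi_{m'_{k}}^{(k)})\leq\mathcal{L}^{(k)}(\phi_{m_{k}}^{(k)})$. Letting $k\to\infty$ and using the uniform convergence $\phi_{m'_{k}}^{(k)}\to\phi_{t}$ from Theorem \ref{thm:conv of bergman to ricci}, together with the fact (established in \cite{bbgz} and used already in the proof of Cor \ref{cor:conv of balanced etc cy}) that $\mathcal{L}^{(k)}(\psi_{k})\to\mathcal{E}(\psi)$ when $\psi_{k}\to\psi$ uniformly, gives
\[
\limsup_{k}I_{\pm}(\phi_{m_{k}}^{(k)})\leq I_{\pm}(\phi_{t}),\qquad\liminf_{k}\mathcal{E}(\phi_{m_{k}}^{(k)})\geq\mathcal{E}(\phi_{t}).
\]
Now let $t\to\infty$; by Theorem \ref{thm:perelman etc} the weights $\phi_{t}\to\phi_{\infty}$ in $\mathcal{C}^{\infty}$, so both right-hand sides converge to $I_{\pm}(\phi_{\infty})$ and $\mathcal{E}(\phi_{\infty})$ respectively. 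Adding the two inequalities yields $\liminf_{k}\mathcal{F}_{\pm}(\phi_{m_{k}}^{(k)})\geq\mathcal{F}_{\pm}(\phi_{\infty})$, and the variational principle from \cite{bbgz} (applied to the coercive functional $-\mathcal{F}_{\pm}$, where coercivity in the $+K_X$-case is Jensen and in the $-K_X$-case is the Phong-Song-Sturm-Weinkove result cited around Theorem \ref{thm:perelman etc}) then forces equality in both limits and $L^{1}$-precompactness of $\{\phi_{m_{k}}^{(k)}\}$. Any $L^{1}$-accumulation point $\psi$ must satisfy $\mathcal{F}_{\pm}(\psi)=\mathcal{F}_{\pm}(\phi_{\infty})$ and hence agree with $\phi_{\infty}$ modulo scaling; the scaling equivariance of $I_{\pm}$ together with $I_{\pm}(\phi_{m_{k}}^{(k)})\to I_{\pm}(\phi_{\infty})$ (which follows since both inequalities above are equalities) pins down $\psi=\phi_{\infty}$.

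For the second point, substitute $\phi_{\infty}^{(k)}$ for $\phi_{m_{k}}^{(k)}$ in the argument above. The monotonicity along the discrete flow $m\mapsto\phi_{m}^{(k)}$ gives the same two one-sided bounds with $\phi_{\infty}^{(k)}$ in place of $\phi_{m_{k}}^{(k)}$, so exactly as before we conclude $\phi_{\infty}^{(k)}\to\phi_{\infty}$ in $L^{1}(X)$. To upgrade to $\mathcal{C}^{\infty}$-convergence we invoke the results of \cite{ke,wa}: the balanced metrics converge to the unique Kähler-Einstein metric in the $\mathcal{C}^{\infty}$-topology on the level of curvature forms, and then inverting Laplacians (together with the normalization forced by $L^{1}$-convergence of the weights) promotes this to $\mathcal{C}^{\infty}$-convergence of the weights themselves.

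Finally, to handle the non-normalized $+K_{X}$-setting write $(\phi')_{\infty}^{(k)}=\phi_{\infty}^{(k)}+C_{\infty}^{(k)}$ where $C_{\infty}^{(k)}$ is the limit of the constants $C_{m}^{(k)}$ constructed in the proof of Theorem \ref{thm:conv of bergman it for kx}; there it is shown that $C_{m}^{(k)}\to-I_{+}(\phi_{\infty}^{(k)})$ as $m\to\infty$, and by the normalized case just proved $I_{+}(\phi_{\infty}^{(k)})\to I_{+}(\phi_{\infty})$. Hence $(\phi')_{\infty}^{(k)}\to\phi_{\infty}-I_{+}(\phi_{\infty})=\phi_{KE}$ in $\mathcal{C}^{\infty}$ as required. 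The relative case is immediate from the locally uniform dependence on $s$ in Theorems \ref{thm:perelman etc} and \ref{thm:conv of bergman to ricci}. The main obstacle, as in the Calabi-Yau case, is keeping track of the scaling ambiguity when passing to accumulation points: the equality case in the two one-sided bounds above, together with the equivariance of $I_{\pm}$, is what ultimately rules out a nontrivial additive constant in the limit.
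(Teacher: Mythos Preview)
Your proof is correct and follows essentially the same approach as the paper, which simply says that the first two points proceed exactly as in the proof of Corollary~\ref{cor:conv of balanced etc cy} (using the variational characterization in \cite{bbgz}) and then handles the non-normalized $K_X$-case by writing $(\phi')_{\infty}^{(k)}=\phi_{\infty}^{(k)}-I_{+}(\phi_{\infty}^{(k)})$ and passing to the limit. The only cosmetic difference is that the paper pins down the additive constant via the scaling equivariance of $\mathcal{E}$ rather than of $I_{\pm}$, and invokes $L^1$-continuity of $I_{\pm}$ on compacts (from \cite{bbgz}) for the last step, whereas you use the stronger $\mathcal{C}^{\infty}$-convergence already established in the second point; both routes are valid.
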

\begin{proof}
The proof of the first two points proceeds exactly as in the previous
setting (again using the variational characterization in \cite{bbgz}).
As for the claimed convergence in the non-normalized setting it is
obtained by noting that the large $m$ limit $(\phi')_{m}^{(k)}$
in the non-normalized setting is the unique balanced weight such that
$I_{\pm}((\phi')_{\infty}^{(k)})=0.$ In other words, $(\phi'){}_{\infty}^{(k)}=\phi_{\infty}^{(k)}-I_{\pm}(\phi_{\infty}^{(k)}),$
where $\phi_{\infty}^{(k)}$ is the large $m$ limit of the iteration
in the normalized setting. But by the second point above this means
that $(\phi'){}_{\infty}^{(k)}\rightarrow\phi_{\infty}-I_{\pm}(\phi_{\infty})$
in $L^{1}$ (also using the continuity with respect to the $L^{1}-$topology
of the functional $I_{\pm}$ on compacts; compare \cite{bbgz}). By
uniqueness, this means that the limit must be $\phi_{KE}.$ 
\end{proof}

\subsection{Uniform convergence of the balanced weights in the $K_{X}-$setting }

Next we point out that in the $K_{X}-$setting the convergence of
the balanced weights is actually\emph{ uniform} (the proof is independent
of the variational one given in \cite{bbgz}). The proof simply uses
that $\phi^{(k)}$ is close to $\phi_{t_{k}}$ where $\phi_{t}$ is
the corresponding Kähler-Ricci flow and $t_{k}$ is a suitable sequence
tending to infinity. 
\begin{thm}
\label{thm:uniform conv of balanced}Let $\phi^{(k)}$ be the the
balanced weight at level $k$ on the canonical line bundle $K_{X}$
(in the non-normalized setting). When $k\rightarrow\infty,$ the weights
$\phi^{(k)}$ converge uniformly towards the normalized Kähler-Einstein
weight $\phi_{KE}.$\end{thm}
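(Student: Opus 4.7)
The plan is to sandwich $\phi^{(k)}$ between a Bergman iterate and the K\"ahler--Ricci flow, exploiting the quantitative approximation in Theorem \ref{thm:conv of bergman to ricci} together with the contraction property of the Bergman iteration in the non-normalized $K_{X}$--setting. Fix any smooth weight $\phi_{0}$ on $K_{X}$ with strictly positive curvature, and let $\phi_{t}$ and $\phi_{m}^{(k)}$ denote respectively the non-normalized K\"ahler--Ricci flow and the corresponding non-normalized Bergman iteration, both started at $\phi_{0}$. By Corollary \ref{cor:conv of non-normal flow} we have $\phi_{t}\to\phi_{KE}$ uniformly, and we will choose $m=m_{k}$ so that $\phi^{(k)}\approx\phi_{m_{k}}^{(k)}\approx\phi_{m_{k}/k}\approx\phi_{KE}$ uniformly in $k$.

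The first approximation is the delicate one, since the convergence $\phi_{m}^{(k)}\to\phi^{(k)}$ from Theorem \ref{thm:conv of bergman it for kx} is only stated for fixed $k$. To upgrade it to a rate uniform in $k$, I would iterate the contraction in Proposition \ref{pro:bergman it decr sup in kx}: applied with $\psi_{j}^{(k)}=\phi_{j+1}^{(k)}$ it gives $d(\phi_{j}^{(k)},\phi_{j+1}^{(k)})\leq (1-1/k)^{j}d(\phi_{0},\phi_{1}^{(k)})$, and summing the geometric series yields
\[
d(\phi_{m}^{(k)},\phi^{(k)})\leq k(1-1/k)^{m}\,d(\phi_{0},\phi_{1}^{(k)}).
\]
Now $d(\phi_{0},\phi_{1}^{(k)})=\frac{1}{k}\|\log\rho^{(k)}(\phi_{0})\|_{L^{\infty}(X)}$, and Proposition \ref{pro:bouche tian-1} bounds $\rho^{(k)}(\phi_{0})$ uniformly above and away from zero, giving $d(\phi_{0},\phi_{1}^{(k)})\leq C/k$. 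Hence $d(\phi_{m}^{(k)},\phi^{(k)})\leq C(1-1/k)^{m}$ with $C$ independent of $k$.

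Choose now $m_{k}=\lceil 2k\log k\rceil$, so that
\[
(1-1/k)^{m_{k}}=O(k^{-2}),\qquad m_{k}/k^{2}=O(\log k/k)\to 0,\qquad m_{k}/k=2\log k\to\infty.
\]
The triangle inequality gives
\[
d(\phi^{(k)},\phi_{KE})\leq d(\phi^{(k)},\phi_{m_{k}}^{(k)})+d(\phi_{m_{k}}^{(k)},\phi_{m_{k}/k})+d(\phi_{m_{k}/k},\phi_{KE}),
\]
and the three terms are controlled respectively by the contraction estimate of the previous paragraph ($O(k^{-2})$), by Theorem \ref{thm:conv of bergman to ricci} with constant independent of $T$ ($\leq Am_{k}/k^{2}\to 0$), and by Corollary \ref{cor:conv of non-normal flow} ($m_{k}/k\to\infty$).

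The main obstacle is precisely the first term: neither the mere convergence $\phi_{m}^{(k)}\to\phi^{(k)}$ for fixed $k$ nor the $Am/k^{2}$ estimate alone suffices, because the rates of both degrade as $k\to\infty$. The crucial input is that the contraction factor in Proposition \ref{pro:bergman it decr sup in kx} is exactly $1-1/k$, which when combined with the $1/k$ decay of the one-step displacement $d(\phi_{0},\phi_{1}^{(k)})$ produces a bound on $d(\phi_{m}^{(k)},\phi^{(k)})$ with a \emph{uniform} constant; this is what allows the two discrete-time limits $m_{k}\to\infty$ and $k\to\infty$ to be taken simultaneously.
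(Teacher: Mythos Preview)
Your proof is correct and follows essentially the same route as the paper's own argument: both use the contraction constant $1-1/k$ from Proposition~\ref{pro:bergman it decr sup in kx} together with the Bergman asymptotics of Proposition~\ref{pro:bouche tian-1} to get a uniform-in-$k$ bound on $d(\phi^{(k)},\phi_{m}^{(k)})$, then invoke Theorem~\ref{thm:conv of bergman to ricci} and Corollary~\ref{cor:conv of non-normal flow} for the remaining two terms in the triangle inequality. The only differences are cosmetic: the paper cites the Banach fixed-point estimate $\frac{q^{m}}{1-q}\,d(\phi_{0},\phi_{1}^{(k)})$ directly rather than summing the geometric series, uses the slightly weaker bound $d(\phi_{0},\phi_{1}^{(k)})=O(k^{-1}\log k)$, and chooses $m_{k}=[k^{3/2}]$ instead of your $\lceil 2k\log k\rceil$; all choices lead to the same conclusion.
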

\begin{proof}
Fix a smooth and positively curved weight $\phi_{0}$ on $K_{X}$
and denote by $\phi_{m}^{(k)}$ the Bergman iteration at level $k$
with initial data $\phi_{0}^{(k)}=\phi_{0}.$ By Proposition the map
whose iterations define the Bergman iterations is a contraction mapping
with contacting constant $q=(1-\frac{1}{k})<1$ and hence it follows
from the Banach fix point theorem that \[
\left\Vert \phi^{(k)}-\phi_{m}^{(k)}\right\Vert _{L^{\infty}}\leq\frac{q^{m}}{(1-q)}\left\Vert \phi_{1}^{(k)}-\phi_{0}\right\Vert _{L^{\infty}}\]
By definition we have $\phi_{1}^{(k)}-\phi_{0}=\frac{1}{k}\log\rho(k\phi)$
which, according to Prop \ref{pro:bouche tian-1}, is uniformly bounded
by a constant times $\frac{1}{k}\log k$ and hence \[
\left\Vert \phi^{(k)}-\phi_{m}^{(k)}\right\Vert _{L^{\infty}}\leq C(1-\frac{1}{k})^{k})^{m/k}\log k\]
Next we take the sequence $m=m_{k}:=[k^{3/2}]$ where $[c]$ denotes
the smallest integer which is larger than $c.$ Then $t_{k}:=m_{k}/k=k^{1/2}\rightarrow\infty$
as $k\rightarrow\infty$ and since $(1-\frac{1}{k})^{k}\rightarrow e^{-1}<1$
we conclude that \[
\left\Vert \phi_{m}^{(k)}-\phi_{0}\right\Vert _{L^{\infty}}\rightarrow0\]
 as $k\rightarrow\infty.$ If now $\phi_{t}$ denotes the Kähler-Ricci
flow in the non-normalized $K_{X}-$setting we have, according to
Theorem \ref{thm:conv of bergman to ricci}, that \[
\left\Vert \phi_{m}^{(k)}-\phi_{\frac{m_{k}}{k}}\right\Vert _{L^{\infty}}\rightarrow0\]
using that $m_{k}/k^{2}\rightarrow\infty.$ Finally, since $\phi_{t_{k}}\rightarrow\phi_{KE}$
uniformly as $t_{k}\rightarrow\infty$ this proves the theorem. Of
course, the last convergence is not really needed for the proof as
we may as well start with $\phi_{0}=\phi_{KE}$ which is trivially
fixed under the Kähler-Ricci flow.
\end{proof}
It should be pointed out that the uniform convergence in the previous
theorem has been previously obtained by Berndtsson (who also related
it to Tsuji's iteration \cite{ts}), using a different approach -
see the announcement in \cite{bern3}. But hopefully the relation
to the convergence of the Kähler-Ricci flow above may shed some new
light on the convergence.

\subsection{\label{sub:fam gen type}Families of varieties of general type and
comparison with the NS-metric}

The quantized setting concerning the case when $K_{X}$ is ample admits
a straight forward generalization to the case when $K_{X}$ is merely
$\Q-$effective \cite{la}. For simplicity we will only discuss the
case when $K_{X}$ is big, i.e. $X$ is a non-singular variety of
general type. Moreover, we will no longer assume that the map $\pi$
is a submersion. More precisely, we are given a surjective quasi-projective
morphism $\pi:\,\mathcal{X}\rightarrow S$ between non-singular varieties
such that the generic fiber is a variety of general type. We denote
by $S^{0}$ the maximal Zariski open subset of $S$ such that $\pi$
restricted to $\mathcal{X}^{0}:=\pi^{-1}(S^{0})$ is a submersion,
i.e. a smooth morphism (and hence the fibers of $S^{0}$ are non-singular
varieties of general type).

Let us first consider the general absolute case, where we are given
a line bundle $L\rightarrow X$ and an integer $k$ such that $kL$
is effective, i.e. $H^{0}(X,kL)\neq\{0\}.$ The main new feature in
this more general setting is that any Bergman weight $\psi_{k}$ at
level $k,$ i.e. $\psi_{k}\in FS^{(k)}(\mathcal{H}^{(k)}),$ will
usually have singularities, i.e. it defines a singular metric on $L$
with positive curvature form. More precisely, the weight $k\psi_{k}$
on $kL$ is singular precisely along the base locus $Bs(kL)$ of $kL,$
i.e. the intersection of the zero sets of all elements in $H^{0}(kL).$
Anyway, the \emph{difference} of any two Bergman metrics is clearly
bounded. Moreover, when $L=K_{X}$ the measure $\mu_{\psi_{k}}:=e^{\psi_{k}}$
has a smooth density which vanishes precisely along $Bs(kL).$ As
a consequence, we may fix such a reference (singular) weight $\phi_{0}:=\psi_{k}$
and the reference measure $\mu_{0}:=e^{\psi_{k}}.$ Then Lemma \ref{lem:jk is exhaust in bergman}
still applies (as explained in the remark following the lemma). As
a consequence the proof of the convergence of the Bergman iteration
to a balanced weight at level $k$ in the non-normalized $K_{X}-$setting
(Theorem \ref{thm:conv of bergman it for kx}) is still valid as long
as $kK_{X}$ is effective. Combining this latter convergence with
the generalizations \cite{b-p,b-p2} of Berndtsson's Theorem \ref{thm:(Berndtsson).-Let-}
and the invariance of plurigenera \cite{si} then gives the following
generalization of Corollary \ref{cor:pos of semi-bal when kx amp}:
\begin{thm}
\label{thm:pos of semi-bal on v general type}Let $\pi:\,\mathcal{X}\rightarrow S$
be a surjective quasi-projective morphism such that the generic fiber
is a variety of general type. Then, for $k$ sufficiently large there
is a unique singular weight $\phi^{(k)}$ on the relative canonical
line bundle $K_{\mathcal{X}/S}\rightarrow\mathcal{X}$ with positive
curvature current, such that the restriction of $\phi^{(k)}$ to any
fiber over $S^{0}$ is a normalized and balanced weight at level $k.$
Moreover, the weight $\phi^{(k)}$ is smooth on the Zariski open set
defined as the complement in $\mathcal{X}$ of $\bigcup_{s\in S^{0}}Bs(kK_{\mathcal{X}_{s}})\cup\pi^{-1}(S-S^{0}).$ \end{thm}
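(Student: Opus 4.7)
The plan is to generalize the proof of Corollary \ref{cor:pos of semi-bal when kx amp}, carrying out the fiberwise Bergman iteration over $S^{0}$ even when $kK_{\mathcal{X}/S}$ is merely big rather than ample. First, by Siu's invariance of plurigenera \cite{si}, the function $s\mapsto h^{0}(\mathcal{X}_{s},kK_{\mathcal{X}_{s}})$ is constant in $s$ for $k$ sufficiently large (large enough that some, hence every, fiber has $h^{0}(kK_{\mathcal{X}_{s}})=N_{k}>0$); in particular $\pi_{*}(kK_{\mathcal{X}/S})$ restricted to $S^{0}$ is a holomorphic vector bundle of rank $N_{k}$. On each smooth fiber $\mathcal{X}_{s}$, the non-normalized Bergman iteration at level $k$ in the $K_{X}$-setting converges to the unique normalized balanced weight: as noted in the remark following Lemma \ref{lem:jk is exhaust in bergman}, the lemma applies once the reference $\phi_{0}$ is taken to be a Bergman weight (singular only along $Bs(kK_{\mathcal{X}_{s}})$) and $\mu_{0}=e^{\phi_{0}}$, so the coercivity and monotonicity inputs to the proof of Theorem \ref{thm:conv of bergman it for kx} (fourth point) go through verbatim. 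Define $\phi^{(k)}$ fiberwise over $S^{0}$ as this balanced weight.

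For smoothness in the $s$-direction I follow the implicit function argument of Corollary \ref{cor:pos of semi-bal when kx amp}: the normalized balanced metric $H^{(k)}(s)$ in the fiber of $\pi_{*}(kK_{\mathcal{X}/S})$ is characterized by the smooth equation $G^{(k)}(H,s)=0$ with $G^{(k)}=(T^{(k)}-I,\,I_{+}\circ FS^{(k)})$, and by the strict convexity result of \cite{bbgz} the linearization is invertible modulo scaling. Hence $H^{(k)}$ depends smoothly on $s\in S^{0}$, and $\phi^{(k)}=FS^{(k)}(H^{(k)})$ is smooth precisely on the complement of $\bigcup_{s\in S^{0}}Bs(kK_{\mathcal{X}_{s}})\cup\pi^{-1}(S-S^{0})$.

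For positivity of the curvature current on $\mathcal{X}^{0}$, I initialize the iteration with some globally positively curved, possibly singular, weight $\phi_{0}$ on $K_{\mathcal{X}/S}$ over $\mathcal{X}$, taken for instance as $1/k$ times the Fubini--Study pullback of a basis of global sections of $kK_{\mathcal{X}/S}$ on $\mathcal{X}$, plus a large pullback from the base to ensure strict positivity in horizontal directions. The generalizations of Berndtsson's direct image positivity theorem to the pseudo-effective (and relatively singular) setting given in \cite{b-p,b-p2} apply to $\pi_{*}((k-1)K_{\mathcal{X}/S}+K_{\mathcal{X}/S})$ equipped with the $L^{2}$-metric induced by $(k-1)\phi_{0}$; via the Fubini--Study argument of Corollary \ref{cor:bernd} this forces each Bergman iterate $\phi_{m}^{(k)}$ to have positive curvature current on $\mathcal{X}^{0}$. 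The contraction property of Proposition \ref{pro:bergman it decr sup in kx} in the non-normalized $K_{X}$-setting (which survives in the big case fiberwise) yields uniform convergence $\phi_{m}^{(k)}\to\phi^{(k)}$ on compact subsets of the smooth locus, and semi-positivity of the curvature current is preserved under such limits.

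Finally, I extend $\phi^{(k)}$ across the analytic set $\pi^{-1}(S-S^{0})$ via upper semicontinuous regularization of the trivial extension; the extension is a positively curved singular weight on $K_{\mathcal{X}/S}\to\mathcal{X}$ because $\phi^{(k)}$ is locally bounded above by any smooth local weight. Uniqueness is then automatic: the fiberwise balanced condition pins down $\phi^{(k)}$ on $\mathcal{X}^{0}$, and the positively curved extension across a pluripolar set is unique. The main obstacle I anticipate is verifying that the singular version of Berndtsson's theorem \cite{b-p,b-p2} applies at the required level of singularity for Bergman-type weights along the fiberwise base loci — so that each iterate genuinely has positive curvature current on all of $\mathcal{X}^{0}$ (not merely off a thin set) — and in particular that the $L^{2}$-metric on the direct image bundle is well defined and has the needed Nakano-type positivity. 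Once these technical ingredients from the cited literature are in place, the remainder of the argument is a direct adaptation of the smooth canonically polarized case.
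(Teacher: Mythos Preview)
Your approach to positivity over $S^{0}$ and to smoothness is essentially the same as the paper's: run the non-normalized Bergman iteration in the $K_{X}$-setting, invoke the singular-metric generalizations of Berndtsson's theorem from \cite{b-p,b-p2} at each step, pass to the limit, and then use the implicit function theorem for smoothness of $H^{(k)}(s)$. One small looseness: your initial $\phi_{0}$ is described as built from ``a basis of global sections of $kK_{\mathcal{X}/S}$ on $\mathcal{X}$,'' but such global sections need not exist or need not restrict to a basis on each fiber. The paper instead works locally over $S^{0}$: given $s_{0}$, take a basis $(f_{i})$ of $H^{0}(\mathcal{X}_{s_{0}},kK_{\mathcal{X}_{s_{0}}})$, extend to sections $F_{i}$ over a neighborhood via invariance of plurigenera, and set $\phi_{0}=\frac{1}{k}\log(\frac{1}{N_{k}}\sum|F_{i}|^{2})$. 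This guarantees that $\phi_{0}$ restricts to a Bergman weight at level $k$ on every nearby fiber, which is what is needed for the iteration and for the integrability condition required by \cite{b-p}.

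The genuine gap is in your extension step across $\pi^{-1}(S-S^{0})$. You assert that ``$\phi^{(k)}$ is locally bounded above by any smooth local weight,'' but this is exactly the non-trivial point: as $s$ approaches the discriminant locus the fibers degenerate, and there is no a priori reason the fiberwise normalized balanced weight stays bounded. The paper supplies the missing idea by comparing with the Narasimhan--Simha weight. One proves the pointwise inequality
\[
\phi^{(k)}\leq\phi_{NS}^{(k)}:=\log\Bigl(\sup_{f\in H^{0}(\mathcal{X}_{s},kK_{\mathcal{X}_{s}})}|f|^{2/k}\big/\int_{\mathcal{X}_{s}}(f\wedge\bar f)^{1/k}\Bigr)
\]
(via the extremal characterization of the Bergman kernel together with the normalization $\int e^{\phi^{(k)}}=1$), and then invokes the result of \cite{b-p2} that $\phi_{NS}^{(k)}$ is locally bounded from above with a constant that does not blow up as $s\to S-S^{0}$; this latter fact is proved there by an $L^{2/k}$-variant of the Ohsawa--Takegoshi extension theorem. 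With that upper bound in hand, the extension follows from standard pluripotential theory. Without the NS-metric comparison (or an equivalent uniform upper bound), your extension argument does not go through.
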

\begin{proof}
Let us first prove the positivity statement. As before we may assume
that $S$ is a domain in $\C.$ First we consider the behavior over
the set $S^{0},$ i.e. where the fibration is a submersion. Fix $s_{0}\in S_{0}$
and write $X=\mathcal{X}_{s_{0}}.$ Let $(f_{i})$ be a bases in $H^{0}(X,kK_{X}).$
By the invariance of plurigenera \cite{si} $s_{0}$ has a neighborhood
$U\subset S_{0}$ with holomorphic sections $F_{i}$ of $kK_{\mathcal{X}/S}\rightarrow U$
such that $F_{i}$ restricts to $f_{i}$ on $X.$ After perhaps shrinking
$U$ we may hence assume that the restrictions of $F_{i}$ to any
fiber give a base in $H^{0}(\mathcal{X}_{s},kK_{\mathcal{X}_{s}}).$
Let now $\phi_{0}:=\frac{1}{k}\log(\frac{1}{N_{k}}\sum|F_{i}|^{2})$
so that $\phi_{0}$ is a singular weight on $K_{\mathcal{X}/S}$ over
$U$ with positive curvature and such that $\phi_{0}$ restricts to
a Bergman weight at level $k$ on each fiber. In particular, \begin{equation}
\int_{\mathcal{X}_{s}}|f|^{2}e^{-(k-1)\phi_{0}}<\infty\label{eq:integr cond}\end{equation}
 for any $f\in H^{0}(\mathcal{X}_{s},kK_{\mathcal{X}_{s}}).$ Decomposing,
as before, $kK_{X}=(k-1)L+K_{X}$ with $L=K_{X},$ but now using Theorem
3.5 in \cite{b-p} shows that the curvature current of the weight
$\phi_{1}^{(k)}:=FS^{(k)}\circ Hilb^{(k)}(\phi_{0})$ on $K_{\mathcal{X}/S}$
is positive over $U.$ Since, by definition, $\phi_{1}^{(k)}$ is
still fiber-wise a Bergman weight at level $k$ we may iterate the
same argument and conclude that $\phi_{m}^{(k)}$ has a positive curvature
current for any $m.$ Now, as explained in the discussion before the
statement of the theorem, \[
m\rightarrow\infty\implies\sup_{\mathcal{X}_{s}}|\phi_{m}^{(k)}-\phi^{(k)}|\rightarrow0,\]
 locally uniformly with respect to $s,$ where $\phi^{(k)}$ is the
unique normalized fiber-wise balanced weight at level $k.$ In particular,
it follows that $\phi^{(k)}$ has a curvature current which is positive
over $S^{0}.$ 

Finally, to prove the claimed extension property of $\phi^{(k)}$
over $S-S_{0}$ first note that, writing $X=\mathcal{X}_{s}$ for
a fixed fiber, \begin{equation}
\phi^{(k)}\leq\phi_{NS}^{(k)}:=\log(\sup_{f\in H^{0}(X,kK_{X})}(|f|^{2/k}/\int_{X}(f\wedge\bar{f})^{1/k}))\label{eq:bound by ns}\end{equation}
 where $k\phi_{NS}^{(k)}$ is the weight of the Narasimhan-Simha (NS-)
metric on $kK_{\mathcal{X}/S}.$\cite{ns,ka,ts2,b-p2}. Accepting
this for the moment we can use the result in \cite{b-p2} saying that
$\phi_{NS}^{(k)}$ is locally bounded from above, with a constant
which does not blow-up as $s$ converges to a point in $S-S^{0}$
(this is proved by an $L^{2/k}$ variant of the local Ohsawa-Takegoshi
$L^{2}-$extension theorem). By the inequality \ref{eq:bound by ns}
it hence follows that $\phi^{(k)}$ is also locally bounded from above
by the same constant and then the claimed extension property follows
from basic pluripotential theory.

Finally, to prove the inequality \ref{eq:bound by ns} fix a point
$x\in X.$ By the extremal definition of Bergman kernels there are
sections $f_{i}$ (depending on $x)$ such that $\phi^{(k)}(x)=\frac{1}{k}\log(\frac{1}{N_{k}}|f_{1}|^{2}(x))$
and $\phi^{(k)}=\frac{1}{k}\log(\frac{1}{N_{k}}\sum_{i}|f_{i}|^{2})$
on $X.$ Since, $\int_{X}e^{\phi^{(k)}}=1$ it hence follows that
$\int_{X}(\frac{1}{N_{k}}f_{1}\wedge\bar{f_{1}})^{1/k})\leq1$ which
finishes the proof of the inequality \ref{eq:bound by ns}, since
$f_{1}/(N_{k})^{1/2}$ is a candidate for the sup defining $\phi_{NS}^{(k)}.$

As for the last smoothness statement in the theorem it is proved exactly
is in Corollary \ref{cor:pos of semi-bal when kx amp}, using that
$\pi_{*}(kK_{\mathcal{X}/S})$ is a locally trivial vector bundle
over $S^{0}.$ Indeed, it follows as before that the fiber-wise normalized
balanced metrics $H_{s}^{(k)},$ which by the local freeness may be
identified with a family in $GL(N_{k}),$ form a \emph{smooth} family.
Applying the Fubini-Study map to get $\phi^{(k)}$ then introduces
the singular locus described in the statement of the theorem. \end{proof}
\begin{rem}
\label{rem:inv of plurig}If one does not invoke the invariance of
plurigenera in the proof of the previous theorem then the same argument
gives the slightly weaker statement where $S^{0}$ is replaced by
the intersection of $S^{0}$ with a Zariski open set where $\pi_{*}(kK_{\mathcal{X}/S})$
is a locally trivial vector bundle. If one could then prove that the
extension of $\phi^{(k)}$ is such that the integrability condition
\ref{eq:integr cond} holds over all of $S$, then the invariance
of plurigenera would follow from a well-known version of the Ohsawa-Takegoshi
extension theorem. It would be interesting to see if this approach
is fruitful in the non-projective Kähler case where the invariance
of plurigenera is still open. When $\phi^{(k)}$ is replaced by the
weight of the $NS-$metric $\phi_{NS}^{(k)}$ (see formula \ref{eq:conv of curv of appr})
this approach was recently used by Tsuji \cite{ts2} to give a new
proof of the invariance of plurigenera (in the projective case). 
\end{rem}
It should also be pointed out that (singular) Kähler-Einstein metrics
and Kähler-Ricci flows have been studied recently for $K_{X}$ big.
For example, using the deep finite generation of the canonical ring
there is a unique Kähler-Einstein weight with minimal singularities
which satisfies the Monge-Ampère equation \[
(dd^{c}\phi_{KE})^{n}/n!=e^{\phi_{KE}}\]
 on a Zariski open set in $X$ \cite{egz,begz}. It seems likely that
the positivity result in Corollary \ref{cor:pos of ke } can be extended
to families of such singular weights $\phi_{KE}.$ But there are several
regularity issues which need to be dealt with. Moreover, it also seems
likely that the canonical balanced weights $\phi^{(k)}$ converge
to $\phi_{KE},$ when $K_{X}$ is big, but this would require a generalization
of the convergence results in \cite{bbgz} (which only concern ample
line bundles). This latter conjectural convergence should be compared
with the convergence of the weight of the $NS-$metrics proved in
\cite{b-d}, saying that $\phi_{NS}^{(k)}$ converges in $L^{1}$
(and uniformly on compacts of an Zariski open set) to \[
\phi_{can}:=\sup\{\psi:\,\int_{X}e^{\psi}=1\},\]
 where the sup is taken over all singular weights $\psi$ on $K_{X}$
with positive curvature current. In particular, $\phi_{KE}\leq\phi_{can}$,
which is consistent with the inequality \ref{eq:bound by ns}.

\subsection{\label{sub:Comparison-with-the}Comparison with the constant scalar
curvature and other settings}

Given an ample line bundle $L\rightarrow X$ the absolute setting
when $\mu(\phi):=(dd^{c}\phi)^{n}/n!$ was studied in depth by Donaldson
in \cite{do1,do2}. Of course, in this setting the Kähler-Ricci flow
is trivial, but the corresponding quantized setting and the study
of its large $k$ limit is highly non-trivial. In fact, it it was
shown by Donaldson in \cite{do1} that, if it is a priori assumed
that $c_{1}(L)$ contains a Kähler metric $\omega$ with constant
scalar curvature and if $H^{0}(TX)=\{0\},$ then the curvature forms
of any sequence of balanced weights converge in the $\mathcal{C}^{\infty}-$topology
to $\omega.$ Moreover, Donaldson showed that such balanced weights
do exist for $k$ sufficiently large. As earlier shown by Zhang this
latter fact is equivalent to the polarized variety $(X,kL)$ being
stable in the sense of Chow-Mumford (with respect to a certain action
of the group $SL(N_{k})).$ An explicit proof of the convergence of
the Bergman iteration in this setting was given by Sano \cite{sa}
(see also \cite{do2}). 

Note that in this setting the functional $I_{\mu}$ is precisely the
functional $\mathcal{E}$ (compare the beginning of section \ref{sec:The-general-setting}).
Since, $\mathcal{E}$ is well-known to be concave on $\mathcal{H}_{L}$
with respect to the affine structure and $\mathcal{E}\circ FS$ is
geodesically convex on $\mathcal{H}^{(k)}$ the convergence of the
corresponding Bergman iteration is also a consequence of Proposition
\ref{pro:conv of bergman iter at level k general}. 

It should also be pointed out that the role of the Kähler-Ricci flow
of Kähler metrics in this setting is played by the Calabi flow. Indeed,
as shown by Fine \cite{fi}, the balancing flow, which is a continuous
version of Donaldson's iteration, converges, at the level of Kähler
metrics, in the large $k$ limit to the Calabi flow. More precisely,
the balancing flow $H_{t}^{(k)}$ is simply the scaled gradient flow
on the symmetric space $\mathcal{H}^{(k)}$ of the functional $\mathcal{F_{\mu}}^{(k)}$
in this setting and the convergence holds for the curvature forms
of the weights $FS^{(k)}(H_{t})$ in $\mathcal{H}_{L}.$ 
\begin{rem}
Another, less studied, setting of geometric relevance (see \cite{bern1,be2})
appears when we let \[
\mu(\phi):=\frac{1}{N_{l}}\sum_{i=1}^{N_{l}}f_{i}\wedge\bar{f_{i}}e^{-l\phi}\]
 for a fixed integer $l$ where $f_{i}$ is an orthonormal base for
$H^{0}(lL+K_{X})$ equipped with the Hermitian metric induced by $\phi.$
When $L=-K_{X}$ and $l=1$ this is precisely the normalized $-K_{X}-$setting.
In the general case $I_{\mu}(\phi)$ is essentially the induced metric
on the top exterior power of the Hilbert space $H^{0}(lL+K_{X}).$
Moreover, as soon as the corresponding functional $\mathcal{F}_{\mu}^{(k)}$
has a critical point and $H^{0}(TX)=\{0\}$ the assumptions for convergence
in Proposition \ref{pro:conv of bergman iter at level k general}
are satisfied (see \cite{bern1}).\end{rem}


\begin{thebibliography}{56}
\bibitem{be1}Berman, R.J. : Determinantal point processes and fermions
on complex manifolds: Bulk universality. arXiv:0811.3341 

\bibitem{b-b}Berman, R.J.: Boucksom, S: Growth of balls of holomorphic
sections and energy at equilibrium. Invent. Math. 181 (2010), no.
2, 337-394. 

\bibitem{bbgz}Berman, R.J.: Boucksom, S; Guedj, V; Zeriahi, A: A
variational approach to complex Monge-Ampère equations. arXiv:0907.4490

\bibitem{b-d}Berman, R.J; Demailly, J-P: Regularity of plurisubharmonic
upper envelopes in big cohomology classes. Arkiv för Matematik (to
appear) arXiv:0905.1246. 

\bibitem{bern0}Berndtsson, B: Curvature of vector bundles associated
to holomorphic fibrations. Ann. of Math. (2) 169 (2009), no. 2, 531--560. 

\bibitem{bern1}Berndtsson, B: Positivity of direct image bundles
and convexity on the space of Kähler metrics. J. Differential Geom.
81 (2009), no. 3, 457--482.

\bibitem{b-p}Berndtsson, B; Paun, M: Bergman kernels and the pseudoeffectivity
of relative canonical bundles. Duke Math. J. Volume 145, Number 2
(2008), 341-378.

\bibitem{b-p2}Berndtsson, B; Paun, M: A Bergman kernel proof of the
Kawamata subadjunction theorem. arXiv:0804.3884

\bibitem{bern2}Berndtsson, B: Strict and non strict positivity of
direct image bundles. To appear in Math Zeitschrif. arXiv:1002.4797.

\bibitem{bern3}Berndtsson, B; Abstract in {}``Mathematisches Forschungsinstitut
Oberwolfach: Workshop on Multiplier Ideal Sheaves in Algebraic and
Complex Geometry. Report No. 21/2009''

\bibitem{begz}Boucksom, S; Eyssidieux, P; Guedj, V; Zeriahi, A: Monge-Ampère
equations in big cohomology classes. Acta Math. (to appear). arXiv:0812.3674

\bibitem{b-l-y}Bourguignon, J-P.; Li,P; Yau. S.T: Upper bounds for
the first eigenvalue of algebraic submanifolds Comment. Math. Helvetici
69 (1994) 199-207

\bibitem{ca}Cao, H.D: Deformation of Kähler metrics to Kähler-Einstein
metrics on compact Kähler manifolds. Invent. Math. 81 (1985), no.
2, 359--372.

\bibitem{ch}Chen, X.: The space of Kähler metrics, J. Differential
Geom. 56 (2000), no. 2, 189-234.

\bibitem{ct}Chen, X; Tian, G: Ricci flow on Kaehler-Einstein surfaces.
Invent. Math. Vol. 147, Nr. 3 (2002).

\bibitem{de}Deligne, P. Le déterminant de la cohomologie. (French)
{[}The determinant of the cohomology{]} Current trends in arithmetical
algebraic geometry (Arcata, Calif., 1985), 93--177, Contemp. Math.,
67, Amer. Math. Soc., Providence, RI, 1987. 

\bibitem{do1}Donaldson, S. K. Scalar curvature and projective embeddings.
I. J. Differential Geom. 59 (2001), no. 3, 479--522. 

\bibitem{do2}Donaldson, S. K. Scalar curvature and projective embeddings.
II. Q. J. Math. 56 (2005), no. 3, 345--356.

\bibitem{do3}Donaldson, S. K. Some numerical results in complex differential
geometry. Pure Appl. Math. Q. 5 (2009), no. 2, Special Issue: In honor
of Friedrich Herzebruch. Part 1, 571--618.

\bibitem{egz}Eyssidieux, P Guedj, V Zeriahi, A: Singular K\textasciidieresis{}ahler-Einstein
metrics. arXiv:math/0603431. To appear in J. of A.M.S.

\bibitem{g-s-v-y}Greene, B; Shapere,A; Vafa, C; Yau. S.T: Stringy
cosmic strings and noncompact Calabi-Yau manifolds. Nuclear Physics
B, 337(1):1\textendash{} 36, 1990.

\bibitem{fi}Fine, J: Calabi flow and projective embeddings. arXiv:0811.0155 

\bibitem{fs}Fujiki, A; Schumacher, G: The moduli space of extremal
compact Kähler manifolds and generalized Weil-Petersson metrics. Publ.
Res. Inst. Math. Sci. 26 (1990), no. 1, 101--183. 

\bibitem{g-w}Gross, M; Wilson, P. M. H.: Large complex structure
limits of \$K3\$ surfaces. J. Differential Geom. 55 (2000), no. 3,
475--546.

\bibitem{g-z}Guedj,V; Zeriahi, A: Intrinsic capacities on compact
Kähler manifolds. J. Geom. Anal. 15 (2005), no. 4, 607--639.

\bibitem{ha}Hamilton, R.S. Three-manifolds with positive Ricci curvature,
J. Differential Geom. 17 (1982), no. 2, 255\textendash{}306.

\bibitem{ka}Kawamata, Y: Kodaira dimension of Algebraic fiber spaces
over curves, Invent. Math. 66 (1982), pp. 57-71.

\bibitem{ke}Keller, J: Ricci iterations on Kahler classes. Journal
of the Institute of Mathematics of Jussieu (2009), 8 : 743-768 Cambridge
University Press arXiv:0709.1490 

\bibitem{k-l}Keller, J; Lukic, S: Numerical Weil-Petersson metrics
on moduli spaces of Calabi-Yau manifolds. arXiv:0907.1387

\bibitem{ko}Koiso, N., Einstein metrics and complex structure, Invent,
math., 73 (1983), 71-106.

\bibitem{la}Lazarsfeld, R: Positivity in algebraic geometry. I. Classical
setting: line bundles and linear series. II. Positivity for vector
bundles, and multiplier ideals. A Series of Modern Surveys in Mathematics,
48. and 49. Springer-Verlag, Berlin, 2004. 

\bibitem{lsy}Liu, K; Xiaofeng Sun, X; Yau, S-T: Good Geometry on
the Curve Moduli. Publ. RIMS, Kyoto Univ. 42 (2008), 699\textendash{}724. 

\bibitem{ma}Mabuchi, T: K-energy maps integrating Futaki invariants.
Tohoku Math. J. (2) 38 (1986), no. 4, 575\textendash{}593.

\bibitem{ns}Narasimhan, M.S; Simha, R.R.: Manifolds with ample canonical
class. Inventiones Math. 5 (1968) 120\textendash{}128.

\bibitem{p-s}Phong, D. H.; Sturm, J: Scalar curvature, moment maps,
and the Deligne pairing. Amer. J. Math. 126 (2004), no. 3, 693--712.

\bibitem{p-s-s-w}Phong, D.H; Song, J; Sturm, J; Weinkove, B: The
Moser-Trudinger inequality on K\textasciidieresis{}ahler-Einstein
manifolds. Amer. J. Math. 130 (2008), no. 4, 1067\textendash{}1085.

\bibitem{p-s-s}Phong, D.H., Sesum, N; Sturm, J: Multiplier ideal
sheaves and the K\textasciidieresis{}ahler-Ricci flow. arXiv: math.DG/0611794,
to appear in Commun. Anal. Geom. (2007)

\bibitem{p-w}Protter, M.H.; Weinberger, H.F.: Maximum-principles
in differential equations. Englewood Cliffs, N.J.: Prentice-Hall,
Inc., (1967).

\bibitem{ru}Rubinstein, Y.A: Some discretizations of geometric evolution
equations and the Ricci iteration on the space of Kahler metrics.
Adv. Math. 218 (2008), 1526-1565. 

\bibitem{sa}Sano, Y: Numerical algorithm for finding balanced metrics.
Osaka J. Math. 43 (2006), no. 3, 

\bibitem{sc}Schumacher, G: Positivity of relative canonical bundles
for families of canonically polarized manifolds. arXiv:0808.3259 

\bibitem{si0}Siu, Y. T., Curvature of the Weil-Petersson metric in
the moduli space of compact Kahler- Einstein manifolds of negative
first chern class, Aspects of Math. 9: Friedr. Vieweg \& Sohn, Braunschweig/Wiesbaden
1986 pp. 261-298.

\bibitem{si}Siu, Y-T: Invariance of plurigenera. Invent. Math. 134
(1998), no. 3, 661--673. 

\bibitem{s-t}Song, J;Tian, G: Canonical measures and Kahler-Ricci
flow. arXiv:0802.2570

\bibitem{s-w}Song, J; Weinkove, B: Constructions of Kahler-Einstein
metrics with negative scalar curvature. arXiv:0704.1005 

\bibitem{ti0}Tain, G: Smoothness of the Universal Deformation Space
of Calabi-Yau Manifolds and its Petersson-Weil Metric. In Mathematical
aspects of string theory (San Diego, Calif., 1986), 629\textendash{}646.
Adv. Ser. Math. Phys. 1, World Sci. Publishing, Singapore, 1987.

\bibitem{ti}Tian, Gang Canonical metrics in Kähler geometry. Notes
taken by Meike Akveld. Lectures in Mathematics ETH Zürich. Birkhäuser
Verlag, Basel, 2000. vi+101 pp.

\bibitem{t-z}Tian, G; Zhu, X: Convergence of Kähler-Ricci flow. J.
Amer. Math. Soc. 20 (2007), no. 3,

\bibitem{to}Todorov, A. N: The Weil-Petersson geometry of the moduli
space of SU($\geq3)$ (Calabi-Yau) manifolds. Comm. Math. Phys. 126
(1989), 325\textendash{}346.

\bibitem{ts}Tsuji, H: Dynamical construction of Kähler-Einstein metrics.
arXiv:math/0606626

\bibitem{ts2}Tsuji, H.: Canonical singular hermitian metrics on relative
canonical bundles; arXiv:0704.0566.

\bibitem{v}Voisin, C: Hodge theory and complex algebraic geometry.
I. Translated from the French by Leila Schneps. Reprint of the 2002
English edition. Cambridge Studies in Advanced Mathematics, 76. Cambridge
University Press, Cambridge, 2007. x+322 pp.

\bibitem{wa}Wang, X: Canonical metrics on stable vector bundles.
Comm. Anal. Geom. 13 (2005), no. 2, 253--285.

\bibitem{yau}Yau, S. T.: On the Ricci curvature of a compact Kähler
manifold and the complex Monge-Ampère equation. I. Comm. Pure Appl.
Math. 31 (1978), no. 3, 339--411.

\bibitem{yau2}Yau, S. T.: Non-linear analysis in geometry. Enseignement
Math. (33) (1986) 109-158

\bibitem{z}Zelditch, S: Book review of {}``Holomorphic Morse inequalities
and Bergman kernels Journal'' (by Xiaonan Ma and George Marinescu)
in Bull. Amer. Math. Soc. 46 (2009), 349-361. 
\end{thebibliography}
\end{document}